\definecolor{jasper}{rgb}{0,0.5,0}
\newtheorem{theorem}{Theorem}[section]
\newtheorem{corollary}[theorem]{Corollary}
\newtheorem{lemma}[theorem]{Lemma}
\newtheorem{conjecture}[theorem]{Conjecture}
\newtheorem{question}[theorem]{Question}
\newtheorem{proposition}[theorem]{Proposition}
\theoremstyle{definition}
\newtheorem{remark}[theorem]{Remark}
\numberwithin{equation}{section}
\newcommand{\RR}{\mathbb{R}}
\newcommand{\NN}{\mathbb{N}}
\newcommand{\QQ}{\mathbb{Q}}
\newcommand{\ZZ}{\mathbb{Z}}
\newcommand{\cR}{\mathcal{R}}
\newcommand{\cL}{\mathcal{L}}
\newcommand{\cN}{\mathcal{N}}
\newcommand{\cW}{\mathcal{W}}
\newcommand{\bu}{\mathbf{u}}
\newcommand{\bv}{\mathbf{v}}
\newcommand{\bw}{\mathbf{w}}
\newcommand{\diag}{\mathrm{diag}}
\newcommand{\dist}{\mathrm{dist}}
\newcommand{\SL}{\mathrm{SL}}
\newcommand{\Ad}{\mathrm{Ad}}
\newcommand{\ad}{\mathrm{ad}}
\newcommand{\Lie}{\mathrm{Lie}}
\newcommand{\rk}{\mathrm{rank}}
\newcommand{\Span}{\mathrm{Span}}
\renewcommand{\ker}{\mathrm{Ker}}
\newcommand{\E}{\mathscr{E}}
\newcommand{\Lg}{\mathfrak{g}}
\newcommand{\Lh}{\mathfrak{h}}
\newcommand{\la}{\langle}
\newcommand{\ra}{\rangle}
\newcommand{\eps}{\varepsilon}
\newcommand {\ignore}[1]  {}
\newcommand {\new}[1]   {\textcolor{blue}{#1}}
\newcommand {\comm}[1]   {\textcolor{red}{#1}}
\newcommand {\neww}[1]   {\textcolor{purple}{#1}}
\newif\ifdraft\drafttrue
\newcommand\eq[2]{{\ifdraft{\ \tt [#1]}\else\ignorespaces\fi}\begin{equation}\label{eq:#1}{#2}\end{equation}}
\newcommand {\equ}[1]     {\eqref{eq:#1}}
\newcommand{\ggm}{G/\Gamma}
\newcommand\eqx[2]{{\ifdraft{\ \tt [#1]}\else\ignorespaces\fi}\begin{equation*}\label{eq:#1}{#2}\end{equation*}}
\renewcommand{\setminus}{\smallsetminus}
\begin{document}

\title{Simultaneously bounded and dense orbits for commuting Cartan actions}
\author{Dmitry Kleinbock}
\address{Department of Mathematics, Brandeis University, Waltham MA}
\email{kleinboc@brandeis.edu}

\author{Chengyang Wu}
\address{School of Mathematical Sciences, Peking University, Beijing, 100871, China}
\email{chengyangwu1999@gmail.com}
\date{August 24, 2025}
\thanks{The first-named author was supported by NSF grant DMS-2155111.}

\begin{abstract}
    In this paper we prove that the set of points that have bounded orbits under one regular diagonal flow and dense orbits under the other diagonal flow commuting with the first one has full Hausdorff dimension in $X_3=\SL_3(\RR)/\SL_3(\ZZ)$. 

    To explain its application towards the Uniform Littlewood's Conjecture proposed in \cite{BFK25}, we introduce the concept of ``fiberwise nondivergence'' for the action of a cone inside the full diagonal subgroup. Then our main result implies that there exists a dense subset of $X_3$ in which each point has a fiberwise non-divergent orbit under a cone inside the full diagonal subgroup and an unbounded orbit under every diagonal flow. 
\end{abstract}

\maketitle

\section{Introduction}

\subsection{Simultaneously dense and nondense orbits}

Let $X$ be a metric space, and let $F$ be a one-parameter group or semigroup of self-maps of $X$.  We will denote by $D(F)$ the set of points with a dense $F$-orbit, and by $ND(F)$ its complement, i.e.\ the set of points with a nondense $F$-orbit. A natural question one could ask  is how large the sets $D(F)$   and   $ND(F)$ can be. If the action admits an ergodic invariant probability measure $\mu$, or has some hyperbolic behavior, some instances of this question can be  answered. Namely, under the assumption of ergodicity the set  $D(F)$ has full measure, and for many hyperbolic systems one can prove 
that $ND(F)$ is winning in the sense of Schmidt games (see \cite{BFK11,Da88,T09,T16,wu16,wu18}).
From that it follows that both $D(F)$   and   $ND(F)$ are {\sl thick}, that is, their intersection with any non-empty open set has full Hausdorff dimension. 
Moreover, both full-measure and winning conditions are stable with respect to countable intersections. In particular it implies that for any choice of countably many semigroups $F_i$ for which the above conclusions can be established, it holds that both $\bigcap_i D(F_i)$   and   $\bigcap_i ND(F_i)$  are thick.

\smallskip
However one can also consider 
a mixed case, that is, for two semigroups $F_1$ and $F_2$ acting on $X$ investigate the intersection \eq{mixed}{D(F_1) \cap ND(F_2).} Problems of this type are amenable neither to the full-measure argument, nor to the technique based on Schmidt games. Yet there are 
{many results 
where sets of type \equ{mixed} are shown to be uncountable and dense. 
Furthermore, one may also strengthen the density to equidistribution and study the thickness of the intersection \eq{mixed2}{Eq(F_1)\cap ND(F_2),}
where $Eq(F_1)$ is the set of 
generic points with respect to some natural measure on $X$ in the sense of Birkhoff's pointwise ergodic theorem. 
For example, Schmidt  proved the uncountability of the set \equ{mixed2} when $F_i$ are expanding endomorphisms of the circle with multiplicatively independent bases \cite{s60}, and then considered a similar problem for ergodic toral automorphisms \cite{s64}; see also \cite{BM, BMP}.} 
More recently, in 2013, Bergelson, Einsiedler and Tseng \cite{bet15} showed that for two commuting hyperbolic automorphisms on a torus, or for two elements in a Cartan action on a compact homogeneous space, if the two semigroups $F_1,F_2$ generated by them have trivial intersection, then the set 
\equ{mixed} is thick in $X$. Afterwards, Tseng \cite{T17}, Lytle and Maier \cite{LM18} proved similar results for two certain non-commuting toral diffeomorphisms. {This was generalized further by Einsiedler and Maier \cite{EM20} and  Wu \cite{wu20}.}


\smallskip

Observe that in all aforementioned results the space $X$ was taken to be compact. 
{In this paper we present the first instance of results of this mixed type for dynamical systems on non-compact spaces. Moreover, we will concentrate on a special mode of non-density, namely by demanding that the corresponding orbit be bounded. Namely we will define $$	B(F):=\{x\in X: Fx \text{ is bounded in }X\},$$
which is a subset of $ND(F)$ if $X$ is not compact,
and will study the intersections $Eq(F_1) \cap B(F_2)\subseteq D(F_1) \cap B(F_2)$
for two commuting actions $F_1,F_2$ on $X$.}

\smallskip

To state our main results we first review some basic knowledge about real semisimple Lie groups and their Lie algebras. {One may refer to \cite{knapp} for more details.} 
Let $G$ be a real semisimple Lie group, let $\mathfrak{g}$ be its Lie algebra, and let $\mathfrak{c}$ be a Cartan subalgebra of $\mathfrak{g}$. It is invariant under some Cartan involution of $\mathfrak{g}$, which 
induces a Cartan decomposition $\mathfrak{g}=\mathfrak{k}\oplus\mathfrak{p}$. Then we have \eq{kp}{
\mathfrak{c}=(\mathfrak{c}\cap\mathfrak{k})\oplus (\mathfrak{c}\cap\mathfrak{p}).
}
The $\ad$-action of $\mathfrak{a}:=\mathfrak{c}\cap\mathfrak{p}$ induces a restricted root space decomposition of $\mathfrak{g}$: $$
\mathfrak{g}=\mathfrak{g}_0\oplus\bigoplus_{\chi\in \Sigma}\mathfrak{g}_\chi,
$$ 
where $\mathfrak{g}_0=Z_{\mathfrak{g}}(\mathfrak{a})\supseteq\mathfrak{c}$, and $\mathfrak{g}_{\chi}$ is the restricted root subspace with respect to $\chi\in\Sigma$. We note that $\Sigma$ is a finite set which spans $\mathfrak{a}^*$. 
The kernel of each $\chi\in\Sigma$ defines a hyperplane in $\mathfrak{a}$; the complement of these hyperplanes in $\mathfrak{a}$ is disconnected, and each connected component is called an open Weyl chamber. The closure of an open Weyl chamber is called a closed Weyl chamber. For two nonzero vectors $\bu,\bv$ in $\mathfrak{a}$, we say that they lie in opposite closed Weyl chambers if there exists a closed Weyl chambers of $\mathfrak{a}$ containing $\bu$ and $-\bv$. Similar definitions can be made for two rays in $\mathfrak{a}$. 

Using this terminology, we formulate two main theorems of this paper.
{In what follows we will consider dynamical systems on homogeneous spaces $X=G/\Gamma$, where {$G$ is a  Lie group and $\Gamma$ is a lattice in $G$. The  action of a one-parameter subgroup $F=\{g_t:t \in \RR\}$  of $G$ by left translations defines a flow on $X$. For such an $F$ we will denote by $F^+$ the semigroup corresponding to non-negative values of $t$, that is, let $F^+:= \{g_t:t >0\}$. We will say  that $F$ is \textit{$\Ad$-diagonalizable} if each $\Ad(g_t)$ is diagonalizable over $\RR$.}

Our first result
is  an observation that one can construct points with different types of orbit behavior when the acting one-parameter groups, loosely speaking, point in an opposite direction from each other. It} 
is a simple consequence of Marstrand's slicing theorem.

\begin{theorem}\label{MainThm00}
    Let $G$ be a real semisimple Lie group,  $\Gamma\subset G$   an irreducible lattice in $G$, and $X=G/\Gamma$. Let $F_1$ and $F_{2,j}\,(j\in\NN)$ be 
    $\Ad$-diagonalizable one-parameter subgroups in a Cartan subgroup of $G$ such that the following two conditions hold:
    \begin{enumerate}
        \item[\rm (1)] for each $j\in\NN$, $\mathrm{Lie}(F_1^+)$ and $\mathrm{Lie}(F_{2,j}^+)$ lie in opposite closed Weyl chambers; 
        \item[\rm (2)] either $\Lie(F_{2,j}^+)\,(j\in\NN)$ are all the same, or all $\Lie(F_{2,j}^+)\,(j\in\NN)$ lie in a common open Weyl chamber.
    \end{enumerate}
    Then the intersection $$
    B(F_1)\cap \bigcap_{j\in\NN}Eq(F_{2,j}^+)
    $$ is thick in $X$.
\end{theorem}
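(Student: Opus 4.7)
The plan is to localize the problem to a product neighborhood $N\cong V\times W$ of an arbitrary point of $X$, with $W$ a neighborhood of identity inside a common unstable horospherical subgroup of the family $\{F_{2,j}\}$ and $V$ transverse to the $U^+$-foliation, and then to apply Marstrand's slicing theorem using a thick base $E\subseteq V$ and slices of full Haar measure in $W$. By condition~(2) the set of restricted roots positive on $\Lie(F_{2,j}^+)$ does not depend on $j$, so the unstable horospherical subgroup $U^+:=U^+_{F_{2,j}}$ is the same for every $j$; this uniformity in $j$ is what makes a single product chart work for all the $F_{2,j}$ simultaneously.

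I would first use condition~(1) to show that $B(F_1)$ is $U^+$-invariant. Every restricted root $\chi$ positive on $\Lie(U^+)$ is nonpositive on $\Lie(F_1^+)$, so writing $F_1=\{g_t\}$, the restriction $\Ad(g_t)|_{\Lie(U^+)}$ has eigenvalues of modulus at most $1$ for $t\geq 0$, and $g_t u g_t^{-1}$ stays in a compact subset of $G$ (depending on $u$) as $t\to\infty$. Combined with the identity $g_t(ux)=(g_t u g_t^{-1})(g_tx)$, this yields $uB(F_1)\subseteq B(F_1)$ for every $u\in U^+$. The Kleinbock--Margulis theorem on bounded orbits gives that $B(F_1)$ is thick in $X$; combined with the $U^+$-invariance, in the local chart $N\cong V\times W$ one may write $B(F_1)\cap N=\pi^{-1}(E)$ for some $E\subseteq V$ (with $\pi:N\to V$ the projection), and thickness of $B(F_1)$ forces $\dim_H(E\cap V')=\dim V$ for every open $V'\subseteq V$.

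To handle the other factor I would invoke pointwise Birkhoff genericity on unstable leaves. For each $j$ the flow $F_{2,j}^+$ is mixing on $X$ by Howe--Moore, and the classical Margulis thickening argument---mixing combined with the expansion of $U^+$ by $F_{2,j}^+$---delivers: for every $y\in X$ and Haar-a.e.\ $u\in U^+$, $uy\in Eq(F_{2,j}^+)$. A countable intersection over $j$ preserves full Haar measure, so for every $v\in V$ the slice $\pi^{-1}(v)\cap\bigcap_j Eq(F_{2,j}^+)$ has full Haar measure in $\pi^{-1}(v)\cong W$; and for $v\in E$ we have $\pi^{-1}(v)\subseteq B(F_1)$, so this slice equals $\pi^{-1}(v)\cap A$ where $A:=B(F_1)\cap\bigcap_j Eq(F_{2,j}^+)$. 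Marstrand's slicing theorem then gives, for every $s<\dim V$ and every subbox $N'\subseteq N$, the lower bound $\mathcal{H}^{s+\dim W}(A\cap N')>0$; sending $s\nearrow\dim V$ yields $\dim_H(A\cap N')=\dim X$, hence thickness of $A$. The principal technical obstacle is the \emph{pointwise} genericity on every $U^+$-leaf rather than only almost every leaf: a direct Fubini--Birkhoff argument supplies only a Lebesgue-full set of ``good'' transverse parameters, which could miss the zero-Lebesgue-measure set $E$ in a dimensionally significant way, so the mixing-based leafwise equidistribution is essential for coupling cleanly with the thickness of $E$.
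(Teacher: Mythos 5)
Your proposal is, at its core, the same Marstrand slicing argument as the paper's (Proposition \ref{P:Thm0} together with Lemmas \ref{L:hind} and \ref{L:hopen}): condition (2) gives a common unstable horospherical group $W=H(F_{2,j}^+)$ for all $j$, condition (1) gives the saturation of the bounded-orbit set along $W$, the fibers along $W$ carry full-measure sets of Birkhoff-generic points for \emph{every} base point (your ``leafwise equidistribution'' is exactly Lemma \ref{L:densefull}, which the paper derives from ergodicity plus the local product structure and the invariance of $Eq(F^+)$ under the non-expanding directions, rather than from mixing), and Marstrand's slicing theorem combines the two. The only organizational difference is that you slice over an arbitrary transversal $V$ to the $W$-foliation and recover the thickness of the base set $E$ from the global Kleinbock--Margulis theorem via the product inequality $\dim_H(E\times W)\le\dim_H(E)+\overline{\dim}_B(W)$, whereas the paper uses the $F_1$-adapted decomposition $H^-H^0H$ and the leafwise version \cite[Corollary 5.5]{KM96} directly; both routes are valid and yield the same bound.

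One point needs correction. The claim $uB(F_1)\subseteq B(F_1)$ for $u\in U^+$ is false for the two-sided orbit set: your argument controls $g_tug_t^{-1}$ only for $t\ge 0$, and for $t\to-\infty$ the conjugates $g_tug_t^{-1}$ typically diverge (any root that is strictly negative on $\Lie(F_1^+)$ and positive on $\Lie(U^+)$ produces expansion in backward time), so $F_1(ux)$ need not be bounded even when $F_1x$ is. What your argument actually establishes is $uB(F_1^+)\subseteq B(F_1^+)$, and hence thickness of $B(F_1^+)\cap\bigcap_jEq(F_{2,j}^+)$. This is precisely what the paper's own proof yields as well --- Proposition \ref{P:Thm0} is stated and proved for $B(F_1^+)$ --- so the discrepancy with the statement's $B(F_1)$ is inherited from the source; but you should either restrict to the forward semigroup throughout or supply an additional argument if the two-sided set is really intended.
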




In the special case $X_3=\SL_3(\RR)/\SL_3(\ZZ)$ 
{we are able to remove restrictions (1) and (2) of Theorem \ref{MainThm00}. Define $A\subset \SL_3(\RR)$ to be the connected diagonal subgroup: 
\eq{def-a}{
A:=\left\{\diag(e^{t_1},e^{t_2},e^{t_3})
:t_i\in\RR,\;\sum_{i=1}^3t_i=0\right\}.
	}
Here is our second main result:}

\begin{theorem}\label{MainThm1}
    Let $X_3=\SL_3(\RR)/\SL_3(\ZZ)$,  {and let} $F_1$ and $F_{2,j}\,(j\in\NN)$ be one-parameter subgroups  {of $A$} 
    such that $F_1\neq F_{2,j} \;\forall j\in\NN$. Suppose that  {$F_1$ is regular, that is,} $\mathrm{Lie}(F_1^+)$ is contained in an open Weyl chamber. Then the intersection $$
B(F_1)\cap \bigcap_{j\in\NN}D(F_{2,j}^+)    
    $$
    has full Hausdorff dimension in $X_3$.
\end{theorem}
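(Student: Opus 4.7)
The plan is to reduce the theorem, via a slicing argument along the unstable horospherical foliation of $F_1^+$, to a fiberwise statement. Since $F_1$ is regular, its unstable horospherical subgroup $U^+ \subset \SL_3(\RR)$ has dimension $3$, and $X_3$ admits the local product decomposition $U^- A U^+$. Following the Marstrand-slicing template used in Theorem \ref{MainThm00}, it suffices to produce, for a positive-measure family of base points $x_0 \in X_3$, a compact subset of the leaf $U^+ x_0$ of Hausdorff dimension arbitrarily close to $3 = \dim U^+$ that is contained in $B(F_1) \cap \bigcap_j D(F_{2,j}^+)$; full Hausdorff dimension in $X_3$ then follows by slicing.

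For the bounded part, I would invoke the Kleinbock--Weiss construction of bounded orbits for regular diagonal flows: given any $\eps > 0$, a nested refinement controlled by Kleinbock--Margulis quantitative non-divergence yields a Cantor-like compact set $K_\eps \subset B(F_1) \cap U^+ x_0$ of Hausdorff dimension at least $3 - \eps$, supporting a self-similar ``friendly'' (Federer and non-planar) measure $\mu_\eps$.

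To force $K_\eps$ to meet each $D(F_{2,j}^+)$ in a $\mu_\eps$-conull subset, I would interleave the Cantor refinement with a density-enforcement step, in the spirit of the Bergelson--Einsiedler--Tseng construction for compact homogeneous spaces cited in the introduction. Enumerate pairs $\{(j_n, V_n)\}_{n \in \NN}$ with $V_n$ running over a countable basis of open sets in $X_3$; at stage $n$, further refine each surviving piece so that it contains a point whose forward $F_{2,j_n}^+$-orbit visits $V_n$. Since $F_{2,j} \neq F_1$, the generators of $F_{2,j}$ and $F_1$ span the two-dimensional Cartan $\La$, so $\Ad(F_{2,j}^+)$ acts nontrivially on $U^+$. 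In the main case $\Ad(F_{2,j}^+)$ expands at least one direction in $U^+$, and the refinement is carried out using expanding-horospherical equidistribution for $\mu_\eps$. In the remaining case, where $\Lie(F_{2,j}^+)$ lies in the open Weyl chamber opposite to that of $\Lie(F_1^+)$, the conjugation $g u g^{-1}$ converges to $e$ for all $u \in U^+$ as $g \in F_{2,j}^+$ goes to infinity; then from $g u = (g u g^{-1})\, g$, density of the $F_{2,j}^+$-orbit of $u x_0$ reduces to density of the $F_{2,j}^+$-orbit of $x_0$, which holds for generic $x_0$ by Moore ergodicity.

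The main obstacle is balancing the Hausdorff dimension budget: each density-enforcement step incurs a small dimension loss, which must be made summable (to $\eps$) while simultaneously servicing all countably many $F_{2,j}$. The most delicate subcase is a non-regular $F_{2,j}$ (lying on a wall of $\La$), whose adjoint action on $U^+$ has a neutral direction, so that the standard expanding-horospherical equidistribution must be replaced by effective equidistribution under a one-parameter subgroup with mixed expanding/neutral signature. Unifying these cases across all $j$ and all geometric positions of $F_{2,j}$ relative to $F_1$ is the key difficulty that distinguishes Theorem \ref{MainThm1} from its predecessor.
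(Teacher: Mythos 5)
Your proposal diverges fundamentally from the paper's argument, and the divergence is exactly where the gap lies. The paper's proof is not a constructive Cantor-set/Schmidt-game argument at all: it runs through entropy and measure rigidity. One first produces (via Lemma \ref{L:entropydim} and Corollary \ref{C:entropydim}) an $F_1$-invariant, $a$-ergodic probability measure $\nu$ supported on a \emph{bounded} $F_1$-invariant set with $h_\nu(a)$ close to $h_{top}(a)$; one then averages $\nu$ along $F_{2,j}^+$, shows the weak-* limit is nonzero (Proposition \ref{nonzero}, using the entropy-in-the-cusp bound of \cite{ron23}) and retains positive entropy contribution along a Lyapunov direction (Lemma \ref{L:posentropy}); the Einsiedler--Katok--Lindenstrauss classification \cite[Corollary 1.4]{EKL06} then forces a positive proportion of its $A$-ergodic components to be Haar (Proposition \ref{partialhaar}), which in turn forces $\nu\big(D(F_{2,j}^+)\big)=1$ (Proposition \ref{fulldense}). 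Only after that does Marstrand slicing enter, with the Ledrappier--Young dimensions $d^\pm(\nu)$ supplying the fiberwise dimension count. The regularity of $F_1$ and the restriction to $X_3$ are used precisely so that EKL applies.

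The gap in your plan is the ``density-enforcement step.'' To conclude that every point of your Cantor set $K_\eps\subset B(F_1)\cap U^+x_0$ has a dense $F_{2,j}^+$-orbit, you must know that, inside each surviving piece of a measure-zero fractal consisting entirely of points with bounded $F_1$-orbits, there are points whose $F_{2,j}^+$-orbits enter a prescribed open set $V_n$ --- and then that this survives all later refinements. You propose to get this from ``expanding-horospherical equidistribution for $\mu_\eps$,'' but $U^+x_0$ is the expanding leaf for $F_1$, not for $F_{2,j}$: in the main case $\Ad(F_{2,j}^+)$ expands only a proper subgroup of $U^+$ (and may have a neutral direction), and equidistribution of $F_{2,j}^+$-translates of a fractal measure supported on such a leaf is not a known result; it is essentially equivalent to the statement you are trying to prove. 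This is exactly the obstruction the introduction flags when it says mixed problems of type \equ{mixed} are amenable neither to full-measure arguments nor to Schmidt games. The paper circumvents it by replacing $\mu_\eps$ with a high-entropy invariant measure, so that measure rigidity --- rather than any equidistribution of fractal translates --- supplies the density of $F_{2,j}^+$-orbits. Without an input of that strength, the interleaved refinement cannot be carried out, and the dimension-budget issue you mention is secondary to this missing ingredient.
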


\begin{remark} Due to some technical reasons (see the proof of Proposition \ref{partialhaar}) we have to impose the assumption that $F_1$ is regular, although the result should probably be true without it.
    Due to other technical reasons (see the remark after Lemma \ref{L:localdim}), we cannot prove that the intersection is thick in $X_3$.
\end{remark}

\subsection{Applications towards Uniform Littlewood's Conjecture}\label{S:imply}
Theorem \ref{MainThm1} 
has an application towards a uniform version of Littlewood's conjecture recently introduced by Bandi, Fregoli and the first-named author in \cite{BFK25}. 
Recall that the classical Littlewood's conjecture can be stated as follows: 
\begin{conjecture}[Littlewood]\label{C:LC}
For any pair of real numbers $(\alpha,\beta)$ and any $\epsilon>0$, there is an unbounded set of $T>0$ such that the 
system \begin{equation}\label{littlewood}
		\begin{cases}
		|p+q\alpha||r+q\beta|<\epsilon/T\\
		q\leq T
		\end{cases}
	\end{equation}
has a solution $(p,r,q)\in\ZZ\times\ZZ\times\NN$.
\end{conjecture}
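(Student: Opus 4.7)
The plan is to translate the assertion into a statement about diagonal orbits on $X_3$ and attack it via measure rigidity for the $A$-action. Given $(\alpha,\beta)\in\RR^2$, set
\[
u_{\alpha,\beta}:=\begin{pmatrix} 1 & 0 & \alpha \\ 0 & 1 & \beta \\ 0 & 0 & 1 \end{pmatrix}, \qquad \Lambda_{\alpha,\beta}:=u_{\alpha,\beta}\SL_3(\ZZ)\in X_3,
\]
and let $A^+:=\{\diag(e^{t_1},e^{t_2},e^{-t_1-t_2}):t_1,t_2\ge 0\}$ denote the closed positive Weyl chamber inside the group $A$ of \eqref{def-a}. A short computation using Minkowski's convex body theorem, together with the Dani correspondence, shows that a triple $(p,r,q)$ solving \eqref{littlewood} with $q\le T$ and right-hand side $\varepsilon/T$ corresponds to a short vector in the lattice $a_t\Lambda_{\alpha,\beta}$ for an appropriate $a_t\in A^+$. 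Consequently, the Littlewood assertion for $(\alpha,\beta)$ is equivalent to the orbit $A^+\cdot\Lambda_{\alpha,\beta}$ being unbounded in $X_3$. So the first step is to reduce Conjecture \ref{C:LC} to proving that for every $(\alpha,\beta)$ this cone orbit escapes every compact subset of $X_3$.

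The next step is a proof by contradiction: assume that for some $(\alpha,\beta)$ the orbit $A^+\Lambda_{\alpha,\beta}$ is contained in a compact set $K\subset X_3$. Then the orbit closure $\overline{A\Lambda_{\alpha,\beta}}$ is also bounded, and a Krylov--Bogolyubov argument applied to Ces\`aro averages along $A^+$ produces an $A$-invariant Borel probability measure $\mu$ supported on it. One would then invoke the measure rigidity theorem of Einsiedler--Katok--Lindenstrauss: if $\mu$ has positive entropy for some one-parameter subgroup of $A$, then $\mu$ must be the Haar measure on $X_3$, contradicting the compact support. Thus the whole problem reduces to showing that the averaging procedure yields a limit measure of positive entropy.

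The hard part, and the reason Littlewood's conjecture remains open, is precisely this last step. No zero-entropy measure rigidity is currently available for the full $A$-action on $X_3$: the Einsiedler--Katok--Lindenstrauss theorem already forces the exceptional set of $(\alpha,\beta)$ to have Hausdorff dimension zero, but passing from Hausdorff dimension zero to the empty set requires either a positive-entropy lower bound derived from the specific structure of $\Lambda_{\alpha,\beta}$ (which no present technique supplies uniformly in $(\alpha,\beta)$) or a genuinely new ingredient. My proposal would be to combine the fiberwise non-divergence framework introduced in this paper---which controls how $A^+$-orbits distribute along the direction of a regular element---with the EKL machinery, hoping to force the limit measure to inherit positive entropy from transversal expansion. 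I fully expect the zero-entropy case to remain the central barrier, as it has been for every known attack on this problem; the proposal above should therefore be read as outlining where a proof would have to break new ground rather than as a complete strategy.
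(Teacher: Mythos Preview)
The statement you are addressing is Conjecture~\ref{C:LC}, Littlewood's conjecture. It is stated in the paper \emph{as a conjecture}, not as a theorem; the paper contains no proof of it, because none is known. So there is nothing to compare your attempt to.

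Your proposal is not a proof but a survey of the standard dynamical reformulation (the Dani--Margulis correspondence plus the Einsiedler--Katok--Lindenstrauss measure rigidity theorem), together with an honest admission that the zero-entropy obstruction blocks every known approach. That admission is correct, and you say so yourself: ``the proposal above should therefore be read as outlining where a proof would have to break new ground rather than as a complete strategy.'' In other words, you have not proposed a proof; you have explained why there is not one. There is no additional gap to name beyond the one you already identified---the inability to rule out zero-entropy $A$-invariant measures supported on a bounded $A^+$-orbit closure is exactly the open problem.

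One minor correction to your write-up: boundedness of $A^+\Lambda_{\alpha,\beta}$ does \emph{not} in general imply boundedness of the full $A$-orbit $A\Lambda_{\alpha,\beta}$; the opposite cone can still escape. What one actually does is find an $A$-invariant measure inside $\overline{A^+\Lambda_{\alpha,\beta}}$ by taking a limit point of the forward orbit and exploiting that any accumulation point of $\{a_t x : t\to\infty\}$ for a ray in the interior of the cone has a bounded full $A$-orbit (cf.\ Lemma~\ref{L:containbounded} in the appendix material). This does not affect your overall conclusion, which is that the conjecture is open.
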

One says that the pair $(\alpha,\beta)$ is {\sl multiplicatively well approximable} (MWA) if it satisfies the conclusion of Conjecture \ref{C:LC}, and {\sl multiplicatively badly approximable} (MBA) otherwise. It follows from  {a theorem of Gallagher \cite{Gal62}} 
that almost all pairs $(\alpha,\beta)$ are MWA. The best result towards Conjecture \ref{C:LC} was obtained two decades ago by Einsiedler, Katok and Lindenstrauss \cite{EKL06}, who proved that the set of   MBA pairs has zero Hausdorff dimension.

\smallskip
One of the goals of the paper \cite{BFK25} was to study the uniform analogue of multiplicative Diophantine approximation. Namely, a pair $(\alpha,\beta)$ was said to be {\sl multiplicatively singular} (MS) if for any $\epsilon>0$ there exists $T_0 > 0$ such that the  system \eqref{littlewood}
has a nonzero integer solution {for any $T > T_0$}. Clearly MS implies MWA. It was shown in \cite{BFK25} that the set of MS pairs has full Lebesgue measure.  However, the following questions, posed as \cite[Question 4.6]{BFK25}, are wide open: 

\begin{question}\label{bfkquestions} \rm

\begin{itemize}
    \item[(i)]  Is it true that  any  $(\alpha,\beta)$ is MS? an affirmative answer would clearly imply that  Conjecture \ref{C:LC} holds in a stronger form, which is sometimes informally referred to as the `Uniform Littlewood's Conjecture'.
\item[(ii)] Assuming the answer to the previous question is negative, does there exist a pair $(\alpha,\beta)$ that is MWA but not MS? \end{itemize}
\end{question}

\ignore{\begin{conjecture}[Uniform Littlewood]\label{C:ULC}
    For any pair of real numbers $(\alpha,\beta)$ and any $\epsilon>0$, the following system \begin{equation*}
		\begin{cases}
		|p+q\alpha||r+q\beta|<\epsilon/T\\
		q\leq T
		\end{cases}
	\end{equation*}
has solutions $(p,r,q)\in\ZZ\times\ZZ\times\NN$ for all large positive $T$.
\end{conjecture}

It is clear that Conjecture \ref{C:ULC} implies Conjecture \ref{C:LC}.

Two decades ago, Einsiedler, Katok and Lindenstrauss \cite{EKL06} proved that the counterexample set to Conjecture \ref{C:LC} has zero Hausdorff dimension. Recently, Bandi, Fregoli and Kleinbock \cite{BFK25} showed that the set of counterexamples to Conjecture \ref{C:ULC} has zero Lebesgue measure. However, it is still unknown whether these counterexample sets are empty or not.}

Following \cite{BFK25}, one can interpret the properties discussed above in a dynamical way. To $(\alpha,\beta)\in\RR^2$  one can associate a point in $X_3=\SL_3(\RR)/\SL_3(\ZZ)$:
$$
x_{\alpha,\beta}:=\begin{pmatrix}
    1 &0 &\alpha\\ 0 &1 &\beta \\ 0 &0 &1 
\end{pmatrix}\cdot\SL_3(\ZZ),
$$
and consider its orbit under left translations by elements of 
$A$ 
as in \equ{def-a}.
  Further, consider the following subset of $A$:
\eq{aplus}{A^{+}:=\{\diag(e^t,e^s,e^{-t-s}):t,s\ge0\}.}
Using Mahler's compactness criterion, it was shown in \cite{EKL06} that $(\alpha,\beta)$ is multiplicatively well apprpoximable if and only if the orbit $A^{+}x_{\alpha,\beta}$ is unbounded. 
Multiplicatively singular pairs were treated in \cite{BFK25} using similar ideas. Namely, let us consider a partition of $A^{+}$ into fibers 
\eq{ltplus}{L_T^{+}:=\left\{\diag(e^t,e^s,e^{-t-s}):t,s>0,t+s=T\right\},
}
where $T$ runs through all non-negative real numbers. Then say that the orbit $A^{+}x$, where $x\in X_3$, is {\sl fiberwise divergent} if for any compact subset $K$ of $X_3$   there exists $T_0 > 0$ such that $L_T^+x\not\subseteq K$ for all $T > T_0$.  It was proved in  \cite{BFK25} that $(\alpha,\beta)$ is multiplicatively singular if and only if $A^{+}x_{\alpha,\beta}$ is fiberwise divergent. Equivalently, $(\alpha,\beta)$ is not MS if and only if there exists a bounded subset $K\subseteq X_3$ such that the set $\{T>0: L_T^{+}x\subseteq K\}$ is unbounded. Thus the two parts of Question \ref{bfkquestions} can be rephrased as follows: 
\begin{itemize}
    \item[(i)]  Is it true that $A^{+}x_{\alpha,\beta}$ is fiberwise divergent for any $(\alpha,\beta)$?
\item[(ii)] Is it possible to find $(\alpha,\beta)$ such that    $A^{+}x_{\alpha,\beta}$   is unbounded but not fiberwise divergent? \end{itemize}

 Being unable to answer these questions, we would like to put them into a more general context. Let 
    $$
    \mathfrak{a}:=\Lie(A) = \left\{\diag(t_1,t_2,t_3):t_i\in\RR,\;\sum_{i=1}^3t_i=0\right\}.
    $$
    By \textit{a cone in $\mathfrak{a}$} we mean a nonempty convex subset $C\subseteq \mathfrak{a}$ with the property that $$
	\bv\in C\Longrightarrow \forall r>0, r\bv\in C.
	$$
	Its image $A_C$ under the exponential map, which is a subsemigroup of $A$, will be  called \textit{a cone in $A$}. For example  $A^+$ as in \equ{aplus} can be written as $A_{C_0}$, where $C_0 := \{\diag(t,s,-t-s):t,s\ge0\}$.

\ignore{We say that a point $x\in X_3$ has {\sl a bounded $A^{+}$-orbit} if $A^{+}x$ is a bounded subset of $X_3$; say that a point $x\in X_3$ has {\sl a weakly bounded $A^{+}$-orbit} if there exists a bounded subset $K\subseteq X_3$ such that $\{T>0: \exp(L_T^{+})x\subseteq K\}$ is an unbounded set, where $$
L_T^{+}:=\{\diag(t,s,-t-s):t,s>0,t+s=T\}.
$$
Using Mahler's compactness criterion, one easily check that for any real pair $(\alpha,\beta)$:\begin{itemize}
		\item[($\neg$LC)] $(\alpha,\beta)$ violates Littlewood's conjecture $\Longleftrightarrow x_{\alpha,\beta}$ has a bounded $A^{+}$-orbit.
		\item[($\neg$ULC)] $(\alpha,\beta)$ violates uniform Littlewood's conjecture $\Longleftrightarrow x_{\alpha,\beta}$ has a weakly bounded $A^{+}$-orbit.
	\end{itemize}
It is interesting to study the difference between $A^+$-boundedness and $A^+$-weak boundedness.}

Suppose that we are given a cone $C\subset \mathfrak{a}$\ignore{which is not a half-plane}. Say that a linear functional $\lambda$ on $\mathfrak{a}$ is {\sl compatible with} $C$ if $C\cap \lambda^{-1}(T) $ is bounded and non-empty for any $T > 0$. In this case for any $T > 0$ let us define $$L_{C,\lambda,T}:= A_C \cap \exp\big(\lambda^{-1}(T)\big) =  \left\{\exp(\bv)
:\bv\in C,\;\lambda(\bv) = T\right\}.$$
\ignore{\textit{A family $\cL_C$ of parallel segments in $C$} is defined by the following data: \begin{itemize}
        \item $\cL_C=\{L_{T,C}:T>0\}$ is a collection of segments in $C$ parametrized by $T>0$;
        \item for each $T>0$, the interior of $L_{T,C}$ is contained in $C$, while the two endpoints of $L_{T,C}$ lies on $\partial C$;
        \item for each $T\neq T'>0$, the segments $L_{T,C}$ and $L_{T',C}$ are parallel;
        \item $\dist(0,L_{T,C})$ is a nonconstant linear function in $T$.
    \end{itemize}}
    For example $L^+_T$ as in \equ{ltplus} can be written as $L_{C_0,\lambda_0,T}$, where $\lambda_0(t_1,t_2,t_3) = t_1 + t_2$.
    
   Now, for $x\in X_3$, say that $A_Cx$ is  {{\sl $\lambda$-fiberwise divergent} 
   if for any compact   $K\subset X_3$   there exists $T_0 > 0$ such that $L_{C,\lambda,T}x\not\subseteq K$ for all $T > T_0$, and {\sl $\lambda$-fiberwise non-divergent} otherwise}.
   
   Using the above terminology, one may 
   attempt to pose a version of Question~\ref{bfkquestions}  for all points $x\in X_3$ (not only for points of the form $x_{\alpha,\beta}$) as follows:

\begin{question}\label{Qs:wbb}
    Let $C\subset\mathfrak{a}$ be a cone\ignore{which is not a half-plane}, and let $\lambda$ be compatible with $C$. Then: 
    \begin{itemize}
          \item[\rm (i)]  Can one 
          describe all $x\in X_3$ with a $\lambda$-fiberwise non-divergent orbit $A_Cx$?
\item[\rm (ii)]  Does there exist $x\in X_3$ such that its  $A_C$-orbit is $\lambda$-fiberwise nondivergent but unbounded?
    \end{itemize}
   \end{question}




{Here we give a stronger result which implies an affirmative answer to Question~\ref{Qs:wbb}(ii):
\begin{theorem}\label{Main:ans0}
Let $C\subset\mathfrak{a}$ be an open cone\ignore{which is not a half-plane}, and let 
$\lambda$ be a linear functional compatible with $C$ that is not a root. 
Then there exists a dense subset of $X_3$ in which each point has a $\lambda$-fiberwise nondivergent $A_C$-orbit and an unbounded $F^+$-orbit for every ray $F^+$ in $A$.
\end{theorem}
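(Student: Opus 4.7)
The plan is to apply Theorem~\ref{MainThm1} to produce a single point $x_0 \in X_3$ satisfying both conclusions of the theorem, and then to realize the desired dense subset via a dense orbit coming out of the same theorem, using the $A$-invariance of the two conditions.

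The pivotal structural observation is that both properties in the conclusion are invariant under the action of $A$: for any $g \in A$, the commutativity of $A$ yields
\[
L_{C,\lambda,T}(gx) \;=\; g\, L_{C,\lambda,T} x \qquad \text{and} \qquad F^+(gx) \;=\; g\, F^+ x,
\]
so a compact witness to $\lambda$-fiberwise nondivergence of the $A_C$-orbit of $x$ translates (still being compact) into one for $gx$, and unboundedness of every ray orbit is preserved under left multiplication by $g$. Consequently, if one can produce a single $x_0$ enjoying both properties together with a dense $F^+$-orbit for some one-parameter subgroup $F \subset A$, then $F^+ x_0$ is a dense subset of $X_3$ every point of which inherits both properties.

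To construct $x_0$, I would apply Theorem~\ref{MainThm1} with $F_1 = \exp(\RR v_1)$ for a nonzero $v_1 \in \ker\lambda$. The assumption that $\lambda$ is not a root ensures that $\ker\lambda$ is distinct from every root hyperplane of $\mathfrak{a}$; since root hyperplanes in the $2$-dimensional $\mathfrak{a}$ are lines through $0$ and two distinct such lines meet only at $0$, the vector $v_1$ avoids every root hyperplane and thus lies in an open Weyl chamber, making $F_1$ regular. Pick $\bm \in \mathfrak{a}$ with $\lambda(\bm) = 1$, set $F_{2,0} := \exp(\RR\bm)$, and enumerate the remaining $F_{2,j}\;(j\ge 1)$ as a countable family of one-parameter subgroups distinct from $F_1$ whose directions form a dense subset of the space of directions in $\mathfrak{a}$. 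Theorem~\ref{MainThm1} then furnishes a full-Hausdorff-dimensional (in particular nonempty) set $\cS \subseteq X_3$ of points $x_0$ whose $F_1$-orbit lies in some compact $K_0$ and whose $F_{2,j}^+$-orbit is dense in $X_3$ for every $j$. Property (A) is then obtained from the following computation: every $\bv \in C \cap \lambda^{-1}(T)$ decomposes uniquely as $\bv = T\bm + \bw$ with $\bw$ in a compact interval $J(T)\subset\ker\lambda$ of length $O(T)$, so by commutativity of $A$,
\[
L_{C,\lambda,T}\, x_0 \;=\; \exp(T\bm)\,\exp(J(T))\,x_0 \;\subseteq\; \exp(T\bm)\,F_1 x_0 \;\subseteq\; \exp(T\bm)\,K_0.
\]
Taking $T_n \to \infty$ along the recurrence times of the dense orbit $F_{2,0}^+ x_0$ back to a small neighborhood of $x_0$, the compact set $\exp(T_n\bm)\,K_0$ stays in a fixed compact neighborhood $K$, yielding $L_{C,\lambda,T_n} x_0 \subseteq K$ as required for (A).

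The principal obstacle is condition (B): the point $x_0$ as just constructed has bounded $F_1$-orbit, so its two rays $F_1^{\pm}$ give bounded orbits, directly violating (B). To remedy this, I propose to replace $x_0$ by a suitable limit point $y_0 \in \overline{F_{2,0}^+ x_0}$. For each fixed ray $F^+ \subset A$ and compact $K$, the set $\{y : F^+ y \subseteq K\}$ is closed with empty interior (by ergodicity of the corresponding diagonal one-parameter flow), so $\{y : F^+ y \text{ is unbounded}\}$ is a dense $G_\delta$; intersecting over a countable dense family of rays and invoking Baire category gives a dense $G_\delta$-set along which every ray in the family has unbounded orbit, and a continuity argument using the closedness in $S^1$ of $\{F^+ : F^+ y \subseteq K\}$ extends this to every ray. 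The delicate point is that $y_0$ must simultaneously inherit property (A) from $x_0$: I would select $y_0$ as a limit of $\exp(t_n \bm) x_0$ along a diagonally extracted sequence $t_n \to \infty$, chosen so that the compact witness $K$ for fiberwise nondivergence of $x_0$ transfers (via $A$-invariance along a shifted sequence $T_n'\to\infty$) to a compact witness for $y_0$. The main technical work will lie in coordinating these two limit procedures — the Baire/continuity argument ensuring (B) and the diagonal extraction ensuring (A) — and verifying that a single $y_0$ can be produced satisfying both; once this is done, the dense $F_{2,0}^+$-orbit of $y_0$ yields the required dense subset of $X_3$.
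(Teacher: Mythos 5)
Your high-level ingredients match the paper's (apply Theorem \ref{MainThm1} with $F_1\subseteq\exp(\ker\lambda)$, regular because $\lambda$ is not a root; exploit commutativity of $A$; handle the unbounded-rays condition by Baire category), but two steps as written do not work. First, the claim that $\exp(T_n\bm)K_0$ stays in a fixed compact set along recurrence times of $\exp(t\bm)x_0$ is unjustified and in general false. Writing $y_n:=\exp(T_n\bm)x_0$, commutativity gives $\exp(T_n\bm)K_0=\overline{F_1y_n}$; recurrence only yields $y_n=h_nx_0$ with $h_n\to 1_G$, and then $\exp(\bw)y_n=\bigl(\exp(\bw)h_n\exp(-\bw)\bigr)\exp(\bw)x_0$ with $\bw$ ranging over an interval of length comparable to $T_n$ in $\ker\lambda$. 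Since $F_1$ is regular, these conjugates blow up exponentially in $|\bw|$ unless $h_n$ is within $e^{-cT_n}$ of the identity in the expanding directions, which mere recurrence does not provide; for a point with dense $\exp(\RR^+\bm)$-orbit one in fact expects $F_1y_n$ to be unbounded. The paper sidesteps this by evaluating the fiber at the pulled-back point: for $y=g_{-T}x_0$ (with $g_t=\exp(t\bw_0)$, $\bw_0\in\partial C$) one has $L_{C,\lambda,T}y=\{a_sx_0:s\in(0,c_0T)\}\subseteq K_0$ exactly, and the final point is produced as a nested intersection $\bigcap_i\overline{\cN_i}$ of neighborhoods of such pulled-back points (Proposition \ref{P:nondiv}); that construction is also precisely what accomplishes the coordination of conditions (A) and (B) that you defer to ``the main technical work.''

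Second, the set you need to be meager is $\{y: F^+y\subseteq K\ \text{for \emph{some} ray}\ F^+\subseteq A\}$, an uncountable union over directions. Your plan --- treat a countable dense family of rays by ergodicity and Baire, then ``extend to every ray by continuity using the closedness in $S^1$ of $\{F^+:F^+y\subseteq K\}$'' --- does not close the gap: a closed subset of the circle of directions that is disjoint from a countable dense set can still be nonempty, so a point whose orbits along all rays of the countable family are unbounded may nevertheless have a bounded orbit along some other ray, and boundedness is not stable under perturbing the direction, so no continuity transfer is available. The paper's Lemma \ref{L:cpthaus} proves closedness of the full union by a compactness argument on unit vectors, and proves empty interior by showing that every rational point $g\Gamma$ with $g\in\SL_3(\QQ)$ has unbounded orbit under every ray in a fixed open Weyl chamber (via the $BU\cdot\Gamma$ decomposition and Mahler's criterion); some argument of this kind, beyond ergodicity of individual flows, is required.
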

We remark here that if $C=C_0$, and if in the dense subset of Theorem \ref{Main:ans0} we were able to find a point $x$ of the form $x_{\alpha,\beta}$, it would solve Question 1.7(ii). However our methods do not allow it. 
}

\subsection{The Structure of the Paper} Our paper is organized as follows. In Section \ref{S:Proof1}, we prove a more general result than Theorem \ref{MainThm00} using Marstrand's slicing theorem. The whole Section \ref{entropy} is devoted to the proof of Theorem \ref{MainThm1}, which is divided into three steps and relies heavily on the entropy arguments.
One ingredient of the proof is a technical lemma, an analogue of \cite[Proposition 2.4]{bet15}, whose proof is relegated to Appendix \ref{A:a}. 

To show the application of our main results towards Question~\ref{Qs:wbb}(ii), in Section \ref{nonconstr} we prove a slightly stronger Baire category variant (see Proposition \ref{P:nondiv}), from which Theorem \ref{Main:ans0} follows. 

 \subsection*{Acknowledgments}
Both authors are grateful to Prasuna Bandi and Reynold Fregoli for discussions during their visit in Brandeis University. The first-named author would like to thank Elon Lindenstrauss and Barak Weiss for helpful suggestions and discussions. The second-named author would also like to thank Jinpeng An, Manfred Einsiedler, Ronggang Shi and Runlin Zhang for their generous help and inspiring discussions. 

\section{Marstrand's Slicing Arguments: Proof of Theorem \ref{MainThm00}}\label{S:Proof1}


\subsection{Horospherical subgroups and their properties} \label{horosph} {In this subsection we let $G$ be a real Lie group, $\Gamma$   a lattice in $G$, $X = \ggm$, and $F=\{g_t:t \in \RR\}$   {an $\Ad$-diagonalizable} one-parameter subgroup of $G$.} {Denote $a:= g_1$, 
and let $\sigma_a
\subseteq\RR$ be the set of eigenvalues of $\Ad(a)$.}
{Let $\mathfrak{g}$ denote the Lie algebra of $G$, and for each $\lambda\in{\sigma_a}
$, let $E_{\lambda}$ be the $\lambda$-eigenspace of $\Ad({a})$:
\eqx{geneigen}{E_\lambda:=\left\{\bv\in \Lg: \Ad({a})\bv=\lambda \bv\right\}.}
Let $\Lh,\Lh^0,\Lh^-$ be the subalgebras of $\Lg$ so that \eqx{horoalg}{\Lh=\Span\{E_{\lambda}: \lambda>1\},\;
\Lh^0=E_1,\;
\Lh^-=\Span\{E_{\lambda}: 0<\lambda<1\},}
{and let 
\eq{3sbgps}{H=H(F^+),\  H^0=H^0(F^+), \ H^-=H^-(F^+)}  be the corresponding subgroups of $G$; they are called \textit{unstable, neutral and stable horospherical subgroups} with respect to $F^+$.} 



Let us begin the proof of Theorem \ref{MainThm00} with 
the following result about bounded orbits of {points on 
$H$-orbits}, which is a variant of   \cite[Corollary 5.5]{KM96}.

    \begin{lemma}[Corollary 5.5 in \cite{KM96}]\label{L:KM1.5} {Let $G$ be a Lie group, $\Gamma$ a lattice in $G$, $F$ a one-parameter $\Ad$-diagonalizable subgroup of $G$,  $X=G/\Gamma$ and $H=H(F^+)$.} Then for any $x\in X$, the set $$
    \{h\in H:hx\in B(F^+)\}
    $$
    is thick in $H$.        
    \end{lemma}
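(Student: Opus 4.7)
The plan is to exhibit the target set $T_x:=\{h\in H:hx\in B(F^+)\}$ as a Schmidt-winning subset of $H$; since any Schmidt-winning set in a smooth manifold has full Hausdorff dimension in every non-empty open set, this yields the thickness claim. Two ingredients are needed: expansion of $H$ under conjugation by $F^+$, and a Kleinbock--Margulis-type non-divergence estimate for $H$-orbits in $X$.

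The expansion will be pulled back to $\mathfrak{h}:=\Lie(H)$ via the exponential map. Since $F$ is $\Ad$-diagonalizable, $\mathfrak{h}$ is spanned by those eigenspaces $E_\lambda$ of $\Ad(a)$ (with $a:=g_1$) for which $\lambda>1$; in particular $\mathfrak{h}$ is a nilpotent Lie subalgebra. Choosing an inner product adapted to this eigenspace decomposition yields constants $C,\kappa>0$ with $\|\Ad(g_t)\bv\|\ge Ce^{\kappa t}\|\bv\|$ for all $\bv\in\mathfrak{h}$ and $t\ge0$. The non-divergence estimate supplied by \cite[\S5]{KM96} asserts that for every $x\in X$ and $\varepsilon>0$ there exist a compact $K_\varepsilon\subset X$ and $r_0>0$ such that, writing $|\cdot|$ for a fixed Haar measure on $H$,
\[
\bigl|\{h\in B_H(h_1,r):g_thx\notin K_\varepsilon\}\bigr|\le\varepsilon\cdot\bigl|B_H(h_1,r)\bigr|
\]
uniformly over $t\ge0$, $h_1\in H$ and $r\le r_0$; its proof relies on the $(C,\alpha)$-good property of polynomial-like functions restricted to the nilpotent $H$.

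With these inputs one plays Schmidt's $(\alpha,\beta)$-game on a local $\mathfrak{h}$-chart around an arbitrary base point in $H$. Given Bob's $n$-th ball $B_n$ of radius $r_n=(\alpha\beta)^{n-1}r_0$, Alice picks $t_n:=\kappa^{-1}\log(r_0/r_n)$, so that conjugation by $g_{t_n}$ expands $B_n$ to a ball of size $\asymp r_0$. The non-divergence estimate then implies that the set of $h\in B_n$ with $g_{t_n}hx\notin K_{\varepsilon_n}$ occupies at most an $\varepsilon_n$-fraction of $B_n$; choosing $\varepsilon_n$ small enough (depending only on $\alpha$ and $\dim H$) so that this bad set fits inside a ball of radius $(1-\alpha)r_n$, Alice plays the complementary ball of radius $\alpha r_n$. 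The Cantor-type intersection point $h_\infty$ satisfies $g_{t_n}h_\infty x\in K_{\varepsilon_n}\subseteq K_{\varepsilon_1}$ for every $n$, and since the increments $t_{n+1}-t_n=\kappa^{-1}\log\bigl(1/(\alpha\beta)\bigr)$ form a fixed constant $\Delta$, the entire orbit $\{g_sh_\infty x:s\ge 0\}$ is contained in the compact set $\bigcup_{s\in[0,\Delta]}g_s(K_{\varepsilon_1})$. Thus $h_\infty\in T_x$, making $T_x$ an $\alpha$-winning set on every chart of $H$ and therefore thick. The main technical obstacle is obtaining the non-divergence estimate for an $\Ad$-diagonalizable (but not necessarily unipotent) flow; once it is in hand, as supplied by the nilpotency of $H$ and the KM framework, the Schmidt-game argument above converts it into a winning strategy essentially formally.
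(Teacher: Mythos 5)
The paper does not actually prove this lemma: it is quoted verbatim from \cite{KM96} (Corollary 5.5 there), where the argument goes via mixing of the $g_t$-flow and a tessellation/tree construction of a Cantor-like set of full dimension --- no Schmidt game and no nondivergence estimate appear. So your route is genuinely different, and unfortunately it contains a real gap.

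The fatal step is the passage from the measure bound to Alice's move: you assert that if $\bigl|\{h\in B_n: g_{t_n}hx\notin K_{\varepsilon_n}\}\bigr|\le\varepsilon_n|B_n|$ with $\varepsilon_n$ small, then this bad set ``fits inside a ball of radius $(1-\alpha)r_n$'' so that Alice can play a complementary ball avoiding it. Small measure does not imply small diameter: the bad set is open and may be scattered densely throughout $B_n$ (a union of many tiny components), in which case every ball of radius $\alpha r_n$ meets it and no single choice of $B_{n+1}$ avoids it. This is exactly why winning is strictly stronger than full measure or full dimension, and why the known Schmidt-game proofs of boundedness (Dani for $\SL_2$, and the hyperplane-absolute-winning arguments of Broderick--Fishman--Kleinbock et al.\ and An--Guan--Kleinbock for $\SL_n(\RR)/\SL_n(\ZZ)$) do not use a bare nondivergence estimate: they prove the structural statement that the set of $h\in B_n$ whose translate at time $t_n$ falls outside $K$ is contained in a thin neighborhood of a single affine hyperplane (coming from the analysis of which lattice vectors can become short), and it is that geometric localization, not the measure bound, that lets Alice dodge it. A secondary issue is the interplay between $\varepsilon_n$ and $K_{\varepsilon_n}$: if you shrink $\varepsilon_n$ the compact sets $K_{\varepsilon_n}$ grow (your inclusion $K_{\varepsilon_n}\subseteq K_{\varepsilon_1}$ points the wrong way), so you must keep $\varepsilon$ fixed, which makes the localization problem unavoidable. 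To repair the proof you would either need to supply the hyperplane-type structure of the bad set (which is known only for special $G,\Gamma$, not for an arbitrary Lie group with an arbitrary lattice as in the statement), or abandon the game and run the mixing-based Cantor construction of \cite{KM96}, which is what the cited source does.
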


{We also have} a basic lemma about {equidistribution of $g_t$-trajectories of points of the form $hx$, where $h$ lies in  the unstable horospherical subgroup. It} is a variant of Birkhoff's pointwise ergodic theorem using Margulis' thickening trick.

\begin{lemma}\label{L:densefull}
{Let $G$, $\Gamma$, $F$, $X$ and $H$ be as in Lemma \ref{L:KM1.5}, and} 
    suppose that the left translation by $F^+$ on $X=G/\Gamma$ is ergodic with respect to the Haar measure on $X$. Then for any $x\in X$, the set $$
    S_x:=\left\{h\in H: hx\in Eq(F^+)\right\}
    $$ has full Haar measure in $H$. 
\end{lemma}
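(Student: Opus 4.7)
The plan is to execute the Margulis thickening trick. Let $\mu$ denote the $G$-invariant probability measure on $X = \ggm$; since the $F^+$-action on $(X,\mu)$ is ergodic by hypothesis, Birkhoff's pointwise ergodic theorem yields that $Eq(F^+)$ has full Haar measure in $X$.

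The crucial step is to show that $Eq(F^+)$ is invariant under left multiplication by both $H^-$ and $H^0$. For $H^0$: since $F$ is $\Ad$-diagonalizable, $\Lh^0$ lies in the kernel of the infinitesimal generator of $\Ad(g_t)$, so $g_t$ commutes with $\exp(v)$ for every $v \in \Lh^0$; as $H^0$ is connected and generated by such elements, and $\mu$ is left $G$-invariant (automatic since the existence of a lattice forces $G$ unimodular), a direct change of variables gives $h^0 y \in Eq(F^+) \iff y \in Eq(F^+)$ for all $h^0 \in H^0$. For $H^-$: using a right-invariant metric on $G$ and the induced quotient metric on $X$, one has $d_X(g z, z) \le d_G(g, e)$ uniformly in $z$; since every $f \in C_c(X)$ is uniformly continuous and $g_t h^- g_{-t} \to e$ as $t \to +\infty$ for any $h^- \in H^-$, the standard Hopf estimate shows that the Birkhoff averages of $f$ at $y$ and at $h^- y$ share the same limit, so $Eq(F^+)$ is also $H^-$-invariant.

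With the invariance in hand, fix $x \in X$ and pick a small product neighborhood $U = U^- U^0 U^H$ of the identity in $G$ on which the product map $U^- \times U^0 \times U^H \to U$ is a diffeomorphism and the orbit map $g \mapsto g x$ is injective. Haar measure on $U$ is equivalent to the product of the three factor Haar measures, so $\{g \in U : g x \in Eq(F^+)\}$ has full product measure. Applying Fubini twice produces a full-measure set of $h \in U^H$ such that $\{(h^-, h^0) \in U^- \times U^0 : h^- h^0 h x \in Eq(F^+)\}$ has full measure in $U^- \times U^0$, and the $H^-$- and $H^0$-invariance of $Eq(F^+)$ forces $h x \in Eq(F^+)$ itself for every such $h$. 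Thus $S_x$ has full Haar measure in the neighborhood $U^H$ of the identity in $H$. To extend to all of $H$, one applies the same local argument with base point $h_0 x$ for each $h_0$ in a countable dense subset of $H$, obtaining countably many right-translates of $U^H$ covering $H$ on each of which $S_x$ has full Haar measure. The only delicate point is the Hopf estimate on the non-compact space $X$; it works cleanly because test functions are compactly supported, hence uniformly continuous, and the quotient metric from a right-invariant metric on $G$ provides a uniform comparison of the action.
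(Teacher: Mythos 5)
Your proof is correct and follows essentially the same route as the paper's: Birkhoff's theorem, the Margulis/Hopf thickening argument showing $Eq(F^+)$ is invariant under left multiplication by $H^-$ and $H^0$, the locally almost product structure of Haar measure plus Fubini, and a countable covering of $H$ by translated neighborhoods. The only difference is that you spell out the $H^-H^0$-invariance of $Eq(F^+)$ explicitly, which the paper leaves implicit in its assertion that the preimage of $\Sigma_x\cap Eq(F^+)$ has the product structure $\Omega_x^-\Omega_x^0\cdot(\Omega_x\cap S_x)$.
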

\begin{proof}
   {With notation as in \equ{3sbgps},} or a fixed point $x\in X$ we choose small open neighborhoods $\Omega_x,\Omega_x^0,\Omega_x^-$ of $1_G$ in $H,H^0,H^-$ respectively, such that {the map} $$
    \pi_x:\Omega_x^-\Omega_x^0\Omega_x\to X,\quad g\mapsto gx
    $$ is an isometry onto its image. In particular, $\Sigma_x:=\Omega_x^-\Omega_x^0\Omega_x\cdot x$ is an open neighborhood of $x$ in $X$. It follows from the ergodicity of $F^+$ on $X$ that $\Sigma_x\cap Eq(F^+)$ has full Haar measure in $\Sigma_x$. Then its preimage under $\pi_x$ also has full Haar measure in $\Omega_x^-\Omega_x^0\Omega_x$. We note that, {in view of the $\Ad$-diagonalizability assumption on $F$,} this preimage has the product structure $\Omega_x^-\Omega_x^0\cdot(\Omega_x\cap S_x).$ Then by the locally almost product structure of Haar measure, the set $\Omega_x\cap S_x$ has full Haar measure in $\Omega_x$. It is clear that this conclusion also holds for any nonempty open subset of $\Omega_x$.

    Now for any $h'\in H$, we choose a small open neighborhood $\Omega'$ of $h'$ in $H$ such that $\Omega'(h')^{-1}\subseteq \Omega_{h'x}$. Then it follows from the above paragraph that $$\Omega'\cap S_x=(\Omega'(h')^{-1}\cap S_{h'x})h'$$ has full Haar measure in $\Omega'$. Since $H$ has a countable topological basis, we conclude that the set $S_x$ has full Haar measure in $H$. 
\end{proof}


\subsection{A generalization of Theorem \ref{MainThm00}} \label{general}    
For {a family of} one-parameter $\Ad$-diagonalizable subgroups $F_1$ and $F_{2,j}\,(j\in\NN)$ in $G$, we {let} $$H=H(F_1^+)\ (\text{resp.\ } H^0=H^0(F_1^+), \ H^-=H^-(F_1^+))$$ be the unstable (resp.\ neutral, stable) horospherical subgroup of $G$ with respect to $F_1^+$, and let $$W_j=H(F_{2,j}^+)\ (\text{resp.\ } W_j^0=H^0(F_{2,j}^+), \ W_j^-=H^-(F_{2,j}^+))$$  be the unstable (resp.\ neutral, stable) 
subgroups 
with respect to $F_{2,j}^+$.

\begin{proposition}\label{P:Thm0}
    Let $G$ be a Lie group, let $\Gamma$ be a lattice in $G$, and let $X=G/\Gamma$ be the corresponding homogeneous space. Let $F_1$ and $F_{2,j}\,(j\in\NN)$ be a commuting family of one-parameter $\mathrm{Ad}$-diagonalizable subgroups in $G$  such that 
 {\begin{enumerate}
        \item[\rm (1)] $W_j,W_j^0,W_j^-$ are all independent of $j\in\NN$, denoted by $W,W^0,W^-$;
        \item[\rm (2)] $W\subseteq H^-H^0$, and $W\cdot (W^0\cap H^-H^0)\cdot (W^-\cap H^-H^0)$ contains an open neighborhood of $1_G$ in $H^-H^0$.
    \end{enumerate}
    }
    Suppose that the left translations by $F_1$ and $F_{2,j}\,(j\in\NN)$ on $X$ are all ergodic with respect to the Haar measure on $X$. Then the intersection 
    $$
    B(F_1^+)\cap \bigcap_{j\in\NN}Eq(F_{2,j}^+)
    $$
    is thick in $X$.
\end{proposition}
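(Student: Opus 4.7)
The plan is to mirror the Marstrand slicing approach used in the proof of Theorem~\ref{MainThm00}. I will fix $x \in X$ and show that in any sufficiently small neighborhood of $x$, the intersection $B(F_1^+) \cap \bigcap_{j\in\NN} Eq(F_{2,j}^+)$ has full Hausdorff dimension; thickness then follows from the arbitrariness of $x$.

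For local coordinates, first note that $\Lh^- + \Lh^0$ is closed under the Lie bracket (being a sum of $\Ad$-eigenspaces corresponding to eigenvalues $\leq 1$), so $H^- H^0$ is a closed subgroup of $G$ with Lie algebra $\Lh^- + \Lh^0$. Assumption~(2) then says that the multiplication map $W \times Z \to H^- H^0$, where $Z := (W^0 \cap H^- H^0)(W^- \cap H^- H^0)$, is locally surjective onto a neighborhood of $1_G$; a Lie algebra dimension count (using transversality of $\Lie(W)$ and $\Lie(Z)$ coming from the $\Ad(F_{2,j})$-eigenvalue decomposition) shows it is in fact a local diffeomorphism at the identity. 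Combined with the standard local decomposition $G = (H^- H^0)\cdot H$ near $1_G$, I obtain coordinates on a neighborhood of $x$ via the map $(w,\zeta,h) \mapsto w\zeta h \cdot x$ with domain $W \times Z \times H$, and $\dim W + \dim Z + \dim H = \dim G = \dim X$.

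The crucial observation is that $B(F_1^+)$ is invariant under left multiplication by $H^- H^0$: for any $\eta \in H^- H^0$ the set $\{g_t \eta g_{-t} : t \geq 0\}$ is precompact in $G$ (since $\Ad(g_t)$ has operator norm bounded independent of $t$ on $\Lh^- + \Lh^0$ for $t \geq 0$), so $g_t y$ stays bounded iff $g_t(\eta y) = (g_t \eta g_{-t})(g_t y)$ does. Since $w \zeta \in H^- H^0$, membership $w\zeta h x \in B(F_1^+)$ depends only on $h$, and Lemma~\ref{L:KM1.5} yields that $B_H := \{h \in H : hx \in B(F_1^+)\}$ is thick in $H$. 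For the equidistribution condition, Lemma~\ref{L:densefull} applied to each ergodic flow $F_{2,j}^+$ (whose unstable horospherical subgroup is $W$, by assumption~(1)) gives that $\{w \in W : w \cdot (\zeta h x) \in Eq(F_{2,j}^+)\}$ has full Haar measure in $W$ for every $(\zeta, h)$. Intersecting over the countable family $j \in \NN$, the set $S_{\zeta, h} := \{w : w\zeta h x \in \bigcap_j Eq(F_{2,j}^+)\}$ still has full Haar measure in $W$.

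To conclude, set
\[
E := \bigl\{(w, \zeta, h) \in W \times Z \times H : w \zeta h x \in B(F_1^+) \cap \textstyle\bigcap_j Eq(F_{2,j}^+)\bigr\}.
\]
The previous paragraph shows that for every $(\zeta, h) \in Z \times B_H$, the $w$-slice of $E$ equals $S_{\zeta, h}$ and has full Haar measure in $W$, while the base $Z \times B_H$ has Hausdorff dimension $\dim Z + \dim H$. Marstrand's slicing theorem then forces $\dim E \geq \dim W + \dim Z + \dim H = \dim X$, and pushing forward via the local chart gives full Hausdorff dimension of the target set in the chosen neighborhood of $x$. The main technical obstacle is this last step: a careful version of Marstrand is needed to combine the \emph{thickness} of $B_H$ (a Hausdorff dimension statement) with the \emph{full Haar measure} of the $w$-slices (a Lebesgue statement) into a Hausdorff dimension lower bound on $E$; the expectation is that the formulation used in the proof of Theorem~\ref{MainThm00} will apply here with only minor modifications.
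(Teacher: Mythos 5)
Your proposal is correct and follows essentially the same route as the paper: local product coordinates adapted to $H$ and $H^-H^0$, Lemma~\ref{L:KM1.5} for the bounded-orbit slice, Lemma~\ref{L:densefull} plus countable intersection for the equidistribution slice, and Marstrand's slicing theorem (in the form of \cite[Lemma 1.4]{KM96}, which indeed combines a full-dimension base with full-dimension fibers exactly as you need). The only cosmetic difference is that you place $W$ outermost and apply Marstrand once over the base $Z\times B_H$, whereas the paper slices twice (first over $H$, then inside $H^-H^0$) with $W$ innermost, which forces it to additionally use the invariance of $Eq(F_{2,j}^+)$ under $W^0$ and $W^-$ — a step your ordering conveniently avoids.
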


\begin{proof}
    For a fixed point $x\in X$, we choose small open neighborhoods $\Omega,\Omega^0,\Omega^-$ of $1_G$ in $H,H^0,H^-$ respectively, such that $$
    \Omega^-\Omega^0\Omega\to X,\quad g\mapsto gx
    $$ is a bi-Lipschitz diffeomorphism onto its image. It suffices to show that $$
    S:=\{g\in \Omega^-\Omega^0\Omega: gx\in B(F_1^+)\cap D\}
    $$
    has full Hausdorff dimension in $\Omega^-\Omega^0\Omega$, where $D:=\bigcap_{j\in\NN}Eq(F_{2,j}^+)$.

    
    We note that the set $E:=\{h\in\Omega:hx\in B(F_1^+)\}$ has full Hausdorff dimension in $\Omega$ by Lemma \ref{L:KM1.5}. For each $h\in E$, the slice of $S$ is \begin{equation*}
        \begin{aligned}
            S\cap \Omega^-\Omega^0h&=\{h^-h^0\in \Omega^-\Omega^0:h^-h^0hx\in B(F_1^+)\cap D\}\cdot h\\
            &=\{h^-h^0\in \Omega^-\Omega^0:h^-h^0hx\in D\}\cdot h.
        \end{aligned}
    \end{equation*}
    By Marstrand's slicing theorem (see \cite[Lemma 1.4]{KM96}),
    it suffices to show that for $y=hx\in X$, the set $$
    S':=\{h^-h^0\in \Omega^-\Omega^0:h^-h^0y\in D\}
    $$
    has full Hausdorff dimension in $\Omega^-\Omega^0$.

    It follows from our assumptions
    that we can choose small neighborhoods $\Xi,\Xi^0,\Xi^-$ of $1_G$ in $W,W^0\cap H^-H^0,W^-\cap H^-H^0$ respectively, such that $$
    \Xi^-\times \Xi^0\times \Xi\to \Omega^-\Omega^0,\quad (n^-,n^0,n)\mapsto n^-n^0n
    $$
    is a bi-Lipschitz diffeomorphism onto its image. The preimage of $S'$ under this map has the product structure $\Xi^-\times \Xi^0\times\{n\in\Xi: ny\in D\}$, which has full Hausdorff dimension in $\Xi^-\times \Xi^0\times \Xi$ by Lemma \ref{L:densefull} and Marstrand's slicing theorem. Thus $S'$ also has full Hausdorff dimension in $\Omega^-\Omega^0$. This completes the proof. 
\end{proof}

\begin{remark}
    One may loosen some restrictions on the horospherical subgroups in Proposition \ref{P:Thm0} when {proving} analogues of Lemmas \ref{L:KM1.5} and \ref{L:densefull} for a suitable subgroup of the whole unstable horospherical subgroup. See \cite{shi20} for more discussions around this.
\end{remark}

\subsection{Checking the assumptions of Proposition \ref{P:Thm0}} {Now we are going to find natural restrictions on $F_1$ and $F_{2,j}\,(j\in\NN)$ so that {conditions} (1) and (2) in Proposition \ref{P:Thm0} hold. The next two lemmas are simple results about rays in opposite Weyl chambers and their unstable, neutral and stable horospherical subgroups.
\begin{lemma}\label{L:hind}
    Let $G$ be a real semisimple Lie group, and let $F_j\,(j\in\NN)$ be $\Ad$-diagonalizable one-parameter subgroups in a Cartan subgroup of $G$ such that one of the followings holds:
    \begin{enumerate}
        \item[\rm (1)]  $\Lie(F_j^+)\,(j\in\NN)$ are all the same;
        \item[\rm (2)]  all $\Lie(F_j^+)\,(j\in\NN)$ lie in a common open Weyl chamber.
    \end{enumerate}
    Then $H(F_j^+)$, $H^0(F_j^+)$, $H^-(F_j^+)$ are all independent of $j\in\NN$.
\end{lemma}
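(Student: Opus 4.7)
The plan is to reduce the lemma to an elementary observation about the sign pattern of the restricted roots on the ray $\Lie(F_j^+)$.

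First I would use the $\Ad$-diagonalizability hypothesis to place the generating direction of $F_j$ inside $\La=\mathfrak{c}\cap\Lp$. Decomposing a generator $X_j$ of $\Lie(F_j)$ according to \equ{kp} as $X_j = X_j' + \bv_j$ with $X_j'\in\mathfrak{c}\cap\Lk$ and $\bv_j\in\La$, the operators $\ad(X_j')$ and $\ad(\bv_j)$ commute (they both lie in the abelian $\mathfrak{c}$), the former has purely imaginary spectrum, and the latter has real spectrum. Since the eigenvalues of $\Ad(\exp(tX_j))=\exp(t\,\ad(X_j))$ must be real for all $t\in\RR$, the imaginary parts coming from $\ad(X_j')$ must vanish; as $\Lg$ is semisimple this forces $X_j'=0$. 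Hence $\Lie(F_j^+)=\RR_{\ge 0}\bv_j$ for some $\bv_j\in\La$.

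Next I would invoke the restricted root space decomposition recalled in the introduction: for any $t>0$, $\Ad(\exp(t\bv_j))$ acts on $\mathfrak{g}_\chi$ as multiplication by $e^{t\chi(\bv_j)}$ and trivially on $\mathfrak{g}_0$. Consequently the horospherical subalgebras with respect to $F_j^+$ read
\begin{align*}
\Lh(F_j^+) &= \bigoplus_{\chi\in\Sigma,\ \chi(\bv_j)>0}\mathfrak{g}_\chi, \\
\Lh^-(F_j^+) &= \bigoplus_{\chi\in\Sigma,\ \chi(\bv_j)<0}\mathfrak{g}_\chi, \\
\Lh^0(F_j^+) &= \mathfrak{g}_0\oplus\bigoplus_{\chi\in\Sigma,\ \chi(\bv_j)=0}\mathfrak{g}_\chi,
\end{align*}
so each horospherical subalgebra --- and hence, via the exponential, each horospherical subgroup --- is completely determined by the sign vector $\bigl(\mathrm{sgn}\,\chi(\bv_j)\bigr)_{\chi\in\Sigma}$.

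With that reduction in place, both cases become routine. In case (1), equal rays give $\bv_j=c_j\bv$ with $c_j>0$ and some fixed $\bv\in\La$, so $\mathrm{sgn}\,\chi(\bv_j)=\mathrm{sgn}\,\chi(\bv)$ for every $\chi\in\Sigma$. In case (2), an open Weyl chamber is by definition a connected component of $\La\setminus\bigcup_{\chi\in\Sigma}\ker\chi$, and each restricted root, being a nonzero linear functional with no zero in the component, has constant sign there; hence all $\bv_j$ in a common open Weyl chamber share the same (nowhere-zero) sign vector, and in this case moreover $\Lh^0(F_j^+)=\mathfrak{g}_0$ for every $j$. I do not foresee a genuine obstacle; the only step that requires any care is the initial placement $\bv_j\in\La$, which is precisely where the $\Ad$-diagonalizability hypothesis is actually used.
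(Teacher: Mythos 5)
Your proof is correct and follows essentially the same route as the paper: identify each horospherical subalgebra with the span of the restricted root spaces $\mathfrak{g}_\chi$ on which $\chi$ has the appropriate sign at the generating direction, and observe that these signs are constant on an open Weyl chamber (case (2)) or along a fixed ray (case (1)). The only difference is that you carefully justify the preliminary reduction that the generator lies in $\La=\mathfrak{c}\cap\Lp$ via $\Ad$-diagonalizability, a step the paper's proof takes for granted.
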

\begin{proof}
    The case (1) is trivial. For the case (2), we see that $H(F_j^+)$ is the connected subgroup corresponding to the subalgebra  $$
    \Span\{E^{(j)}_{\lambda}:\lambda>1\},
    $$
    where $E^{(j)}_{\lambda}$ is the $\lambda$-eigenspace of $\Ad(g_1^{(j)})$ for $F_j^+=\{g^{(j)}_{t}:t>0\}$. Write $\mathfrak{a}:=\mathfrak{c}\cap\mathfrak{p}$. An open Weyl chamber of $\mathfrak{a}$ is given by $$
    \{\bv\in\mathfrak{a}:\mathrm{sgn}(\chi(\bv))=\epsilon(\chi),\;\forall \chi\in \Sigma^+\},
    $$
    where $\epsilon(\Sigma^+)\subseteq\{\pm 1\}$. Then it follows that \begin{equation*}
        \begin{aligned}
            \Span\{E^{(j)}_{\lambda}:\lambda>1\}
            &=\Span\{\mathfrak{g}_{\chi}:\chi(\log(g^{(j)}_{1}))>0\}\\
            &=\Span\{\mathfrak{g}_{\chi}:\chi(\log(g^{(j')}_{1}))>0\}
            =\Span\{E^{(j')}_{\lambda}:\lambda>1\},
        \end{aligned}
    \end{equation*}
    which implies that $H(F_j^+)=H(F_{j'}^+)$. Similar arguments work for $H^0(F_j^+)$ and $H^-(F_j^+)$.
\end{proof}
\begin{lemma}\label{L:hopen}
    Let $G$ be a real semisimple Lie group, {and let} $F_1$ and $F_2$ be $\Ad$-diagonalizable one-parameter subgroups in a Cartan subgroup of $G$ such that $\Lie(F_1^+)$ and $\Lie(F_2^+)$ lie in opposite closed Weyl chambers. Then $W\subseteq H^-H^0$, and {the set} $$W\cdot (W^0\cap H^-H^0)\cdot (W^-\cap H^-H^0)$$ contains an open neighborhood of $1_G$ in $H^-H^0$. 
\end{lemma}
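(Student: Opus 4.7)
The plan is to translate both claims into statements about root spaces and prove them by direct inspection. First, fix representatives $\bv_1,\bv_2\in\mathfrak{a}=\mathfrak{c}\cap\mathfrak{p}$ with $F_1=\{\exp(t\bv_1)\}$ and $F_2=\{\exp(t\bv_2)\}$. By hypothesis there is a choice of positive system $\Sigma^+\subset\Sigma$ such that both $\bv_1$ and $-\bv_2$ belong to the corresponding closed Weyl chamber. Equivalently, for every $\chi\in\Sigma$,
\[
\chi\in\Sigma^+\;\Longrightarrow\;\chi(\bv_1)\ge 0 \text{ and }\chi(\bv_2)\le 0,\quad
\chi\in\Sigma^-\;\Longrightarrow\;\chi(\bv_1)\le 0 \text{ and }\chi(\bv_2)\ge 0.
\]
In particular, $\chi(\bv_2)>0$ forces $\chi\in\Sigma^-$ and hence $\chi(\bv_1)\le 0$, while $\chi(\bv_2)<0$ forces $\chi(\bv_1)\ge 0$.

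For the first claim, recall that
\[
\mathfrak{h}=\bigoplus_{\chi(\bv_1)>0}\mathfrak{g}_\chi,\quad
\mathfrak{h}^0=\mathfrak{g}_0\oplus\bigoplus_{\chi\ne 0,\,\chi(\bv_1)=0}\mathfrak{g}_\chi,\quad
\mathfrak{h}^-=\bigoplus_{\chi(\bv_1)<0}\mathfrak{g}_\chi,
\]
and analogously for $W,W^0,W^-$ using $\bv_2$. Since $H^0$ centralizes $F_1$, its adjoint action preserves the $\Ad(g_1)$-eigenspace decomposition, so $H^0$ normalizes $H^-$; thus $H^-H^0$ is a connected closed subgroup of $G$ with Lie algebra $\mathfrak{h}^-\oplus\mathfrak{h}^0$. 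By the implication displayed above, $\chi(\bv_2)>0$ implies $\mathfrak{g}_\chi\subseteq\mathfrak{h}^-\oplus\mathfrak{h}^0$, whence $\mathrm{Lie}(W)\subseteq\mathfrak{h}^-\oplus\mathfrak{h}^0$. Since $W$ is connected, this yields $W\subseteq H^-H^0$.

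For the second claim, the idea is to show the product map
\[
\phi\colon W\times\bigl(W^0\cap H^-H^0\bigr)\times\bigl(W^-\cap H^-H^0\bigr)\longrightarrow H^-H^0,\qquad (w,u,z)\mapsto wuz,
\]
has surjective differential at the identity, so its image contains an open neighborhood of $1_G$ by the inverse function theorem. Since $W^0,W^-$ and $H^-H^0$ are closed Lie subgroups, the Lie algebras of $W^0\cap H^-H^0$ and $W^-\cap H^-H^0$ are $\mathrm{Lie}(W^0)\cap(\mathfrak{h}^-\oplus\mathfrak{h}^0)$ and $\mathrm{Lie}(W^-)\cap(\mathfrak{h}^-\oplus\mathfrak{h}^0)$, and the differential of $\phi$ at $(1,1,1)$ sends $(X,Y,Z)\mapsto X+Y+Z$. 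It therefore suffices to verify the (direct) root-space decomposition
\[
\mathfrak{h}^-\oplus\mathfrak{h}^0 = \mathrm{Lie}(W)\oplus\bigl(\mathrm{Lie}(W^0)\cap(\mathfrak{h}^-\oplus\mathfrak{h}^0)\bigr)\oplus\bigl(\mathrm{Lie}(W^-)\cap(\mathfrak{h}^-\oplus\mathfrak{h}^0)\bigr),
\]
which I would do by a short case analysis on each root $\chi$ with $\chi(\bv_1)\le 0$ (together with $\chi=0$), sorting according to the sign of $\chi(\bv_2)$: the three sub-cases $\chi(\bv_2)>0$, $=0$, $<0$ place $\mathfrak{g}_\chi$ into the first, second, third summand respectively (and the last case is possible only when $\chi(\bv_1)=0$, consistent with the inclusion into $\mathfrak{h}^-\oplus\mathfrak{h}^0$). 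Directness of the sum is automatic since the three pieces involve disjoint root spaces sorted by the sign of $\chi(\bv_2)$.

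The main obstacle here is essentially bookkeeping: one must keep track of the two independent sign conditions on each root, and check that the chamber-opposition hypothesis rules out exactly the combinations $\chi(\bv_1)>0,\chi(\bv_2)>0$ and $\chi(\bv_1)>0,\chi(\bv_2)<0$ with $\chi(\bv_1)>0$ that would obstruct the containment $\mathrm{Lie}(W)\subseteq\mathfrak{h}^-\oplus\mathfrak{h}^0$ or the spanning property. No deeper input beyond the root space decomposition and the inverse function theorem is needed.
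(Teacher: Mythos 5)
Your proposal is correct and takes essentially the same route as the paper's proof: both identify $\Lie(W)$, $\Lie(W^0\cap H^-H^0)$ and $\Lie(W^-\cap H^-H^0)$ as spans of root spaces sorted by the signs of $\chi(\bv_1)$ and $\chi(\bv_2)$, use the opposite-chamber hypothesis to deduce $\Lie(W)\subseteq\mathfrak{h}^-\oplus\mathfrak{h}^0$ and the direct-sum decomposition of $\Lie(H^-H^0)$, and conclude via the surjectivity of the product map's differential at the identity.
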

\begin{proof}
By the same arguments as in the proof of Lemma \ref{L:hind}, we see that \begin{equation*}
    \begin{aligned}
        \Lie(W)&=\Span\{\mathfrak{g}_{\chi}:\chi(\log(g^{(2)}_1))>0\},\\
        &=\Span\{\mathfrak{g}_{\chi}:\chi(\log(g^{(2)}_1))>0,\chi(\log(g^{(1)}_1))\leq 0\}=\Lie(W\cap H^-H^0),\\
    \end{aligned}
\end{equation*}
since $\log(g^{(1)}_1)$ and $\log(g^{(2)}_1)$ lie in opposite closed Weyl chambers. It follows that $W\subseteq H^-H^0$. Similarly, we have \begin{equation*}
    \begin{aligned}
        \Lie(W^0\cap H^-H^0)
        &=\mathfrak{g}_0\oplus\Span\{\mathfrak{g}_{\chi}:\chi(\log(g^{(2)}_1))=0,\chi(\log(g^{(1)}_1))\leq 0\},\\
        \Lie(W^-\cap H^-H^0)
        &=\Span\{\mathfrak{g}_{\chi}:\chi(\log(g^{(2)}_1))< 0,\chi(\log(g^{(1)}_1))\leq 0\}.\\
    \end{aligned}
\end{equation*}
It follows that $$
\Lie(W)\oplus \Lie(W^0\cap H^-H^0)\oplus \Lie(W^-\cap H^-H^0)=\Lie(H^-H^0).
$$
This implies that $W\cdot (W^0\cap H^-H^0)\cdot (W^-\cap H^-H^0)$ contains an open neighborhood of $1_G$ in $H^-H^0$. 
\end{proof}
}

Finally, in view of Moore's ergodicity theorem {and the assumptions on $G$ and $\Gamma$} we immediately deduce Theorem \ref{MainThm00} as a corollary of Proposition \ref{P:Thm0}:

\begin{proof}[Proof of Theorem \ref{MainThm00}] 
In view of Lemmas \ref{L:hind} and \ref{L:hopen}, the assumptions (1) and (2) \new{of} Proposition \ref{P:Thm0} hold. Moreover, by Moore's ergodicity theorem we see that the left translations by $F_1$ and $F_{2,j}\,(j\in\NN)$ are all ergodic with respect to the Haar measure on $X$. Then the conclusion follows from Proposition \ref{P:Thm0}.
\end{proof}

\section{Entropy Arguments: Proof of Theorem \ref{MainThm1}} \label{entropy} For Theorem \ref{MainThm1}, our method of proof is similar to that in \cite{John95}, where an average version of Host's theorem is established. The whole proof is divided into three steps.

\subsection{Step 1: High entropy arguments} In this subsection we introduce a general background of high entropy arguments. Let {$G,\Gamma,X,F,a,\sigma_a,H,H^0,H^-$ be as in Section \ref{horosph}}. 
It is well-known that $$
h_{top}(a)
=\sum\limits_{\lambda\in{\sigma_a}}\log^+\lambda\cdot\dim_\RR(E_{\lambda}),
$$
where $E_{\lambda}$ is the $\lambda$-eigenspace of $\Ad(a)$. The 
{next} lemma calculates the topological entropy of $a$ restricted to an invariant compact set via its Hausdorff dimension. Its proof is similar to that of \cite[Proposition 2.4]{bet15}, {which established an analogous formula for hyperbolic toral automorphisms.}





\begin{lemma}\label{L:entropydim}
	Let $K\subseteq X$ be a compact $a$-invariant set, and {let} $|\lambda_1|$ be the largest absolute value of eigenvalues of $\Ad(a)$. Then we have $$
	h_{top}(a\big|_K)\geq h_{top}(a)-(\dim X-\dim K)\cdot\log|\lambda_1|.
	$$
\end{lemma}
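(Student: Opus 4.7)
The plan is to adapt the Bowen-ball computation of \cite[Proposition 2.4]{bet15}, replacing the linear algebra on the torus with the $\Ad$-diagonal decomposition $\Lg=\bigoplus_{\lambda\in\sigma_a}E_\lambda$. Write $d:=\dim X$ and $k:=\dim K$, and recall that
\[
h_{top}(a|_K)=\lim_{\epsilon\to 0}\limsup_{n\to\infty}\tfrac{1}{n}\log r_K(n,\epsilon),
\]
where $r_K(n,\epsilon)$ is the minimum number of Bowen balls $B_n(x,\epsilon):=\{y\in X:d(a^jx,a^jy)<\epsilon,\ 0\le j<n\}$ needed to cover $K$. The first step will be to show, via a uniform exponential chart on a small neighborhood of $K$, that $B_n(x,\epsilon)$ is comparable to the box in $\Lg$ with side $\epsilon$ in each direction $E_\lambda$ with $|\lambda|\le 1$ and side $\epsilon|\lambda|^{-(n-1)}$ in each direction $E_\lambda$ with $|\lambda|>1$. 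This is precisely where $\Ad$-diagonalizability is used (to rule out polynomial Jordan-block corrections) and where compactness of $K$ supplies a uniform injectivity radius.

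Set $r_n:=\epsilon|\lambda_1|^{-(n-1)}$, the smallest side of such a box. Counting grid points in the box shows that each Bowen ball of radius $\epsilon$ can be covered by at most
\[
N_n\le C\prod_{|\lambda|\le 1}|\lambda_1|^{(n-1)\dim E_\lambda}\prod_{|\lambda|>1}\bigl(|\lambda_1|/|\lambda|\bigr)^{(n-1)\dim E_\lambda}=C\,|\lambda_1|^{(n-1)d}\,e^{-(n-1)h_{top}(a)}
\]
ordinary balls of radius $r_n$, where the last identity uses the formula $h_{top}(a)=\sum_{|\lambda|>1}\log|\lambda|\cdot\dim E_\lambda$. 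Writing $N_K(r)$ for the minimum number of ordinary balls of radius $r$ covering $K$, it follows that $N_K(r_n)\le r_K(n,\epsilon)\cdot N_n$. To bound $N_K(r_n)$ below, Frostman's lemma produces, for every $s<k$, a Borel probability measure $\mu$ supported on $K$ with $\mu(B(x,r))\le c\,r^s$; hence $N_K(r)\ge c^{-1}r^{-s}$ and, letting $s\nearrow k$, $\liminf_n\tfrac{1}{n}\log N_K(r_n)\ge k\log|\lambda_1|$. Combining these two bounds and using $\limsup\ge\liminf$,
\[
\limsup_n\tfrac{1}{n}\log r_K(n,\epsilon)\ge k\log|\lambda_1|-\bigl(d\log|\lambda_1|-h_{top}(a)\bigr)=h_{top}(a)-(d-k)\log|\lambda_1|,
\]
and letting $\epsilon\to 0$ yields the lemma.

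The main obstacle will be Step~1, namely the uniform shape description of $B_n(x,\epsilon)$ as a box in Lie-algebra coordinates, with comparison constants independent of $x\in K$ and $n$. This is the direct analogue for homogeneous spaces of the tangent-space computation in \cite[Proposition~2.4]{bet15}, and I expect it to be the technical ingredient the authors defer to the appendix; the remainder of the argument is bookkeeping that matches the contraction/expansion rates of $\Ad(a)$ against the formula for $h_{top}(a)$.
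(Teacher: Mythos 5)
Your proposal is correct and follows essentially the same route as the paper's appendix: identify Bowen balls centered in $K$ with boxes in exponential (Lie-algebra) coordinates whose expanding sides shrink like $|\lambda|^{-(n-1)}$, cover each such box by roughly $|\lambda_1|^{(n-1)d}e^{-(n-1)h_{top}(a)}$ round balls of radius $\epsilon|\lambda_1|^{-(n-1)}$, and play this off against a lower bound for the covering number of $K$ coming from $\dim K$. The only cosmetic differences are that the paper lower-bounds $N_K(r)$ via the inequality between Hausdorff and lower box dimension rather than Frostman's lemma, and that it treats general (possibly non-diagonalizable) $\Ad(a)$ by replacing $|\lambda_i|$ with $|\lambda_i|-\delta$ and letting $\delta\to 0$ to absorb Jordan-block corrections.
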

\begin{proof}
    See Appendix \ref{A:a}.
\end{proof}

From now on we fix a compact exhaustion $\{K_\delta\}_{\delta>0}$ of $X$.

\begin{corollary}\label{C:entropydim}
    For any $\epsilon>0$, there exists some $\delta=\delta(\epsilon,a)>0$ such that the restriction of $a$ to \begin{equation*}
        B(F,K_\delta):=\{x\in X: Fx\subseteq K_\delta\}
    \end{equation*} satisfies $h_{top}(a|_{B(F,K_{\delta})})>h_{top}(a)-\epsilon$.
\end{corollary}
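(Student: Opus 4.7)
The plan is to deduce the corollary from Lemma~\ref{L:entropydim} applied to $K=B(F,K_\delta)$, together with the claim $\dim B(F,K_\delta)\to\dim X$ as $\delta\to 0$. First I would check that $B(F,K_\delta)=\bigcap_{t\in\RR}g_{-t}K_\delta$ is compact (a closed subset of the compact set $K_\delta$) and $a$-invariant (since $a\in F$ and $F$ is a one-parameter group, $F\cdot ax=Fx\subseteq K_\delta$ for every $x\in B(F,K_\delta)$). Setting $\mu:=\log|\lambda_1|>0$, Lemma~\ref{L:entropydim} then yields
\[
h_{top}(a|_{B(F,K_\delta)})\;\geq\;h_{top}(a)\;-\;\mu\bigl(\dim X-\dim B(F,K_\delta)\bigr),
\]
so it will suffice to produce $\delta>0$ with $\dim B(F,K_\delta)>\dim X-\epsilon/\mu$.

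Second, since $\{K_\delta\}_{\delta>0}$ exhausts $X$, one has $B(F)=\bigcup_{n\in\NN}B(F,K_{\delta_n})$ for any sequence $\delta_n\downarrow 0$; by countable stability of Hausdorff dimension, $\dim B(F,K_\delta)\nearrow\dim B(F)$ as $\delta\to 0$. The corollary therefore reduces to the full-dimension claim $\dim B(F)=\dim X$.

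For this last claim I would run a local slicing argument in the spirit of Proposition~\ref{P:Thm0}. Fix $x_0\in X$ and the local product chart $\Omega^-\Omega^0\Omega\to X$, $(h^-,h^0,h)\mapsto h^-h^0h\,x_0$. Using that $H^0$ centralizes $F$ and that $g_th^-g_{-t}\to 1_G$ as $t\to+\infty$ for $h^-\in H^-$, $F^+$-boundedness of $h^-h^0h\,x_0$ depends only on $h$; Lemma~\ref{L:KM1.5} applied at $x_0$ then produces a thick set $E\subset\Omega$ of $h$ with $hx_0\in B(F^+)$. For each $h\in E$ and any $h^0\in\Omega^0$, put $y:=h^0hx_0$; then $h^-y\in B(F^+)$ for every $h^-\in\Omega^-$ by the same symmetry, while Lemma~\ref{L:KM1.5} applied to $F^{-1}$ at $y$ (noting $H(F^-)=H^-$) yields a thick set of $h^-\in\Omega^-$ with $h^-y\in B(F^-)$. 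Two applications of Marstrand's slicing theorem then produce a subset of $\Omega^-\Omega^0\Omega$ of full Hausdorff dimension mapping into $B(F^+)\cap B(F^-)=B(F)$.

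The hard part will be this last step: combining Lemma~\ref{L:KM1.5} for both $F$ and $F^{-1}$ requires carefully tracking which horospherical translations preserve $F^+$- versus $F^-$-boundedness, so that the product chart decomposes in a way compatible with Marstrand's theorem.
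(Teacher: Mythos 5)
Your proposal is correct, and its top-level reduction is exactly the paper's: check that $B(F,K_\delta)$ is compact and $a$-invariant, apply Lemma~\ref{L:entropydim}, and reduce to finding $\delta$ with $\dim B(F,K_\delta)>\dim X-\epsilon/\log|\lambda_1|$, which follows by countable stability of Hausdorff dimension once one knows $\dim B(F)=\dim X$. The only divergence is in how that last fact is obtained: the paper simply cites \cite[Theorem 1.5]{KM96}, whereas you re-derive it from Lemma~\ref{L:KM1.5} via the local product chart $\Omega^-\Omega^0\Omega$ and two applications of Marstrand's slicing theorem. Your sub-argument is sound: since $g_th^-g_{-t}\to 1_G$ as $t\to+\infty$ and $H^0$ centralizes $F$, translation by $h^-h^0$ preserves $F^+$-boundedness, so the slice condition in the $h$-direction is handled by Lemma~\ref{L:KM1.5} for $F$, and the slice condition in the $h^-$-direction by Lemma~\ref{L:KM1.5} for $F^{-1}$ (whose unstable horospherical subgroup is $H^-$), intersected with the automatic $F^+$-boundedness; this is essentially how \cite{KM96} themselves pass from their Corollary 5.5 to Theorem 1.5, and it mirrors the slicing scheme of Proposition~\ref{P:Thm0}. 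What your route buys is self-containedness (the corollary then depends only on Lemma~\ref{L:KM1.5} rather than on the stronger cited theorem), at the cost of length; the paper's citation is the shorter path. The one detail worth making explicit if you write this up is that the set produced lies in $B(F^+)\cap B(F^-)=B(F)$, i.e.\ two-sided boundedness, since Lemma~\ref{L:KM1.5} alone only controls the forward semigroup --- which you do address correctly.
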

\begin{proof}
Since by \cite[Theorem 1.5]{KM96}  the set $B(F)=\bigcup_{\delta>0}B(F,K_\delta)$ has full Hausdorff dimension in $X$, we may choose $\delta=\delta(\epsilon,a)>0$ small enough such that  
$$
\dim(B(F,K_\delta))>\dim(X)-\dfrac{\epsilon}{\log|\lambda_1|}.
$$
Then the conclusion follows from Lemma \ref{L:entropydim}.
\end{proof}

{Next we recall some basics about Ledrappier--Young's formula for the measure-theoretic entropy. Readers may refer to \cite{AB16} for more necessary definitions. For the left translation by $a$ on $X$, we list the absolute values of eigenvalues of $\Ad(a)$ that are bigger than $1$ as follows: $$
|\lambda_1|>\cdots>|\lambda_k|>1.
$$
For each $i\in\{1,\dots,k\}$, we write $\cW^i$ for the $i$-th unstable foliation with respect to the $a$-action, and let $\xi^i$ be a measurable partition subordinate to $\cW^i$. Now let $\nu$ be an $a$-invariant, ergodic probability measure on $X$,}
{and let $\{\nu^{\xi^i}_x:x\in X\}$ be a family of conditional measures relative to the measurable partition $\xi^i$.}
{It was proved in \cite[Sections 7.3 and 10.1]{AB16} that the limit 
$$
\dim^i(\nu,x):=\lim_{r\to 0^+}\frac{\log\nu^{\xi^i}_x\big(B(x,r)\big)}{\log{r}}, 
$$
exists and equals a constant almost everywhere. It is called
the \textit{$i$-th pointwise dimension of $\nu$}, denoted by $\dim^i(\nu)$. Set $\dim^0(\nu)=0$. Then for $i\in\{1,\dots,k\}$, the \textit{$i$-th transverse dimension of $\nu$} is defined to be $$
\gamma_i(\nu):=\dim^i(\nu)-\dim^{i-1}(\nu).
$$
It is clear that for each $i\in\{1,\dots,k\}$, $$\gamma_i(\nu)\leq \sum\limits_{|\lambda|=|\lambda_i|}\dim(E_{\lambda}).$$ The Ledrappier--Young formula (see \cite[Theorem 7.7]{AB16}) states that $$
h_{\nu}(a)=\sum\limits_{i=1}^k\log|\lambda_i|\cdot \gamma_i(\nu).
    $$}
{Moreover, for each $i\in\{1,\dots,k\}$, we define the entropy contribution of the $i$-th transversal direction to be $$D^i_{\nu}(a):=\log|\lambda_i|\cdot \gamma_i(\nu).$$  We also write the unstable dimension of $\nu$ as $d^+(\nu):=\sum\limits_{i=1}^k\gamma_i(\nu)$. By replacing $a$ above by $a^{-1}$, one may similarly define the stable dimension $d^-(\nu)$.}


\begin{lemma}\label{L:dimclose}
For any $\eta>0$ there exists some $\epsilon=\epsilon(\eta,a)>0$ such that for any $a$-invariant, ergodic probability measure $\nu$ on $X$ with $h_{\nu}(a)>h_{top}(a)-\epsilon$, one has $d^+(\nu)\geq \dim(H)-\eta$, $d^-(\nu)\geq \dim(H^-)-\eta$, and $D^1_{\nu}(a)>0$.
\end{lemma}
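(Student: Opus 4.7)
The argument is a direct application of the Ledrappier--Young formula after grouping the eigenvalues of $\Ad(a)$ by absolute value. For $i=1,\dots,k$ set $D_i:=\sum_{|\lambda|=|\lambda_i|}\dim_\RR E_\lambda$, so that the topological entropy formula recalled earlier reads $h_{top}(a)=\sum_{i=1}^k \log|\lambda_i|\,D_i$ and $\dim(H)=\sum_{i=1}^k D_i$. Ledrappier--Young gives $h_\nu(a)=\sum_{i=1}^k \log|\lambda_i|\,\gamma_i(\nu)$, together with the a priori constraint $0\le \gamma_i(\nu)\le D_i$. The hypothesis $h_\nu(a)>h_{top}(a)-\epsilon$ therefore translates into a single nonnegative-term inequality
\[
\sum_{i=1}^k \log|\lambda_i|\,\bigl(D_i-\gamma_i(\nu)\bigr) \;<\; \epsilon.
\]

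From here all three conclusions follow by bookkeeping. Each summand on the left is individually $<\epsilon$, so $D_i-\gamma_i(\nu)<\epsilon/\log|\lambda_k|$ for every $i$; summing yields $\dim(H)-d^+(\nu)<k\epsilon/\log|\lambda_k|$, and the choice $\epsilon:=\eta\log|\lambda_k|/k$ gives $d^+(\nu)\ge\dim(H)-\eta$. For the strict positivity of $D^1_\nu(a)=\log|\lambda_1|\,\gamma_1(\nu)$, observe that $\gamma_1(\nu)=0$ would force the $i=1$ summand alone to equal $\log|\lambda_1|\,D_1\ge\log|\lambda_1|$; shrinking $\epsilon$ further so that $\epsilon<\log|\lambda_1|$ therefore guarantees $D^1_\nu(a)>0$.

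For the stable bound $d^-(\nu)\ge\dim(H^-)-\eta$ I would simply rerun the argument for $a^{-1}$ in place of $a$. The unstable horospherical subgroup of $a^{-1}$ is exactly $H^-(F^+)=H^-$; its expanding eigenvalues are the $|\lambda|^{-1}$ for those eigenvalues $\lambda$ of $\Ad(a)$ with $|\lambda|<1$; and the identities $h_\nu(a^{-1})=h_\nu(a)$ and $h_{top}(a^{-1})=h_{top}(a)$ preserve the hypothesis. The final $\epsilon$ in the statement is then the minimum of the two thresholds produced for $a$ and for $a^{-1}$. I foresee no real obstacle: the lemma is essentially the assertion that a measure of near-maximal entropy must have near-maximal transverse dimension in each expanding direction, which is exactly the content of the Ledrappier--Young formula of \cite{AB16} once the sum is separated according to distinct $|\lambda_i|$.
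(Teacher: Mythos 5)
Your proposal is correct and follows essentially the same route as the paper: both arguments combine the Ledrappier--Young formula with the a priori bound $\gamma_i(\nu)\le D_i$ to turn the entropy deficit into the inequality $\sum_{i=1}^k\log|\lambda_i|\,(D_i-\gamma_i(\nu))<\epsilon$, then read off all three conclusions term by term (the paper chooses $\epsilon=\min\{\eta(\sum_i 1/\log|\lambda_i|)^{-1},\ \log|\lambda_1|\,D_1\}$ and bounds $D^1_\nu(a)$ directly rather than by contradiction, but these are only cosmetic differences). The treatment of $d^-(\nu)$ by rerunning the argument for $a^{-1}$ is exactly what the paper's ``similar calculation'' means.
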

\begin{proof}
    For a given $\eta>0$ we set $$\epsilon:=\min\left\{\eta\cdot\left(\sum\limits_{i=1}^k\frac{1}{\log|\lambda_i|}\right)^{-1},\;
    \log|\lambda_1|\cdot\sum\limits_{|\lambda|=|\lambda_1|}\dim(E_{\lambda})
    \right\}>0.$$ 
    Since $h_{\nu}(a)>h_{top}(a)-\epsilon$, we have for each $1\le i\leq k$, $$\gamma_i(\nu)>\sum\limits_{|\lambda|=|\lambda_i|}\dim(E_{\lambda})-\dfrac{\epsilon}{\log|\lambda_i|},$$ and hence that
    $$
    d^+(\nu)=\sum\limits_{i=1}^k\gamma_i(\nu)    >\sum\limits_{i=1}^k\sum\limits_{|\lambda|=|\lambda_i|}\dim(E_{\lambda})-\sum\limits_{i=1}^k\frac{\epsilon}{\log|\lambda_i|}\geq\dim(H)-\eta.
    $$
    A similar calculation yields that $d^-(\nu)\geq \dim(H^-)-\eta$. Moreover, we see that \begin{equation*}
        \begin{aligned}
            D^1_{\nu}(a)&\geq
            h_{\nu}(a)-\sum_{i=2}^k\log|\lambda_i|\cdot\sum\limits_{|\lambda|=
            |\lambda_i|}\dim(E_{\lambda})\\
            &>h_{top}(a)-\epsilon-\sum_{i=2}^k\log|\lambda_i|\cdot\sum\limits_{|\lambda|=|\lambda_i|}\dim(E_{\lambda})
            \geq 0.
        \end{aligned}
    \end{equation*} 
  This completes the proof.  
    \end{proof}

Here we also give a geometric interpretation for the unstable and stable dimensions of a probability measure.

\begin{lemma}\label{L:localdim}
For any probability measure $\nu$ on $X$ and  any measurable set $C\subseteq X$ with $\nu(C)=1$ 
one has $\dim(C\cap Hx)\geq d^+(\nu)$ and $\dim(C\cap H^-x)\geq d^-(\nu)$ for $\nu$-a.e. $x\in X$.
\end{lemma}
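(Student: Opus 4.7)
The plan is to derive the Hausdorff dimension lower bound from a standard mass distribution principle applied to the conditional measures whose pointwise dimensions appear in the Ledrappier--Young formula. First I would fix a measurable partition $\xi = \xi^k$ subordinate to the top unstable foliation $\cW^k$, so that for $\nu$-a.e.\ $x$ the atom $\xi(x)$ is a relatively open piece of $Hx$ inside a single leaf. Since $\nu(C)=1$, the disintegration identity $\nu = \int \nu^\xi_x \dd\nu(x)$ forces $\nu^\xi_x(C) = 1$ for $\nu$-a.e.\ $x$, and therefore $\nu^\xi_x(C \cap Hx) = 1$ because $\xi(x) \subseteq Hx$.

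Next, I would invoke the cited Ledrappier--Young theory to conclude that
$$\lim_{r \to 0^+} \frac{\log \nu^\xi_x\bigl(B(x, r)\bigr)}{\log r} = \dim^k(\nu) = d^+(\nu)$$
for $\nu$-a.e.\ $x$. A Fubini argument combined with the consistency of the disintegration then upgrades this to the statement that, for $\nu$-a.e.\ $x$, the pointwise dimension of $\nu^\xi_x$ equals $d^+(\nu)$ at $\nu^\xi_x$-a.e.\ point of $\xi(x)$.

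At this stage I would apply the classical mass distribution principle: a probability measure whose lower pointwise dimension is at least $s$ almost everywhere on a Borel set $E$ of positive measure forces $\dim(E) \geq s$. Applied to $\nu^\xi_x$ restricted to $C \cap \xi(x)$, this yields $\dim(C \cap \xi(x)) \geq d^+(\nu)$, whence $\dim(C \cap Hx) \geq d^+(\nu)$ for $\nu$-a.e.\ $x$. The stable statement then follows by replacing $a$ with $a^{-1}$, whose unstable foliation is the stable foliation of $a$ and whose analogue of $d^+$ is $d^-(\nu)$.

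The main technical care needed is in comparing the pointwise dimension defined via balls in the ambient metric on $X$ with the analogous quantity defined via balls in the intrinsic leaf metric on $Hx$. Because the $H$-orbits are smoothly immersed and the two metrics are locally bi-Lipschitz on pieces of leaves with controlled geometry, this identification is harmless once $\xi$ is chosen so that its atoms have uniformly bounded leaf geometry. I expect this comparison, along with the fact that the mass distribution argument only delivers a full-dimension bound on a set of full $\nu$-measure (rather than on every nonempty open set of $Hx$), to be precisely what prevents the conclusion from being strengthened to \emph{thickness}, as flagged in the remark following the lemma.
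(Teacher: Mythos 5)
Your proposal is correct and follows essentially the same route as the paper: disintegrate $\nu$ along the full unstable foliation, observe that the conditional measures give full mass to $C\cap Hx$, identify $d^+(\nu)=\dim^k(\nu)$ with the almost-sure pointwise dimension of the conditionals, and conclude by the mass distribution principle, with the stable case handled by passing to $a^{-1}$. The paper's proof is just a terser version of this argument, and your closing observation about why the bound cannot be localized to open subsets of the leaf matches the remark following the lemma.
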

\begin{proof}
Let $\nu_x$ denote the conditional measure of $\nu$ along the unstable foliation $Hx$ for $\nu$-a.e. $x\in X$. Given any measurable set $C\subseteq X$ with $\nu(C)=1$, it follows that $\nu_x(Hx\setminus C)=0$ for $\nu$-a.e. $x\in X$. 
        We note that the unstable dimension $d^+(\nu)$ can be reinterpreted as the pointwise dimension of $\nu_x$ for $\nu$-a.e. $x\in X$. Then 
        applying the mass distribution principle gives $$\dim(C\cap Hx)\geq d^+(\nu) $$ for $\nu$-a.e. $x\in X$. A similar argument for $a^{-1}$ and $H^-$ gives $\dim(C\cap H^-x)\geq d^-(\nu)$ for $\nu$-a.e. $x\in X$.
\end{proof}

It should be remarked that one cannot replace $H,H^-$ by their nonempty open subsets in Lemma \ref{L:localdim}. This is because the conditional measure $\nu_x$ may be supported on a proper fractal inside the unstable manifold $Hx$.

\subsection{Step 2: Averaging measures} In this subsection we study some properties of the limit measure along a sequence of $a$-ergodic measures with large entropy. Let $G,\Gamma,X$ {be} as in Section \ref{horosph}, {let} $F_1,F_2$ be two commuting $\Ad$-diagonalizable subgroups of $G$, $H=H(F_1^+)$, and {let} $a$ be the time-$1$ element of $F_1$ under a fixed parametrization.

Let us fix any $\eta>0$ and choose $\epsilon=\epsilon(\eta,a)$ as in Lemma \ref{L:dimclose}. Then, by Corollary \ref{C:entropydim}, there exists some $\delta=\delta(\epsilon, a)$ such that $$
h_{top}(a|_{B(F_1,K_{\delta})})>h_{top}(a)-\epsilon.
$$
It follows from the variational principle of entropy (see \cite[Theorem 8.6]{wal00}) 
that there exists an $a$-invariant Borel probability measure $\nu$ supported on $B(F_1,K_\epsilon)$, such that $$
h_{\nu}(a|_{B(F_1,K_\delta)})>
h_{top}(a)-\epsilon.
$$
By the upper semi-continuity and convexity of measure-theoretic entropy, we may assume that $\nu$ is $F_1$-invariant and $a$-ergodic.

Let $F_2^+=\{g^2_t\}_{t>0}$, and {let} $\mu$ be the weak-* limit along a subsequence of $T\to +\infty$ of the averaging measures:
\eq{E:ave}{\mu_T:=\frac{1}{T}\int_{0}^T(g^2_t)_*\nu\mathrm{d}t.}
Any such limit measure is $\la F_1,F_2\ra$-invariant, but might lose all its mass at infinity. The following proposition solves this problem when $X=X_d:=\SL_d(\RR)/\SL_d(\ZZ)$ and $${a\in A:=\{\diag(a_1,\dots,a_d):a_1,\dots,a_d>0,a_1\cdots a_d=1\}.}$$ Following \cite{ron23}, let us define the ``entropy in the cusp" of $a$ as follows: \begin{equation*}
        \begin{aligned}
h_{\infty}(a):=\sup\{\limsup_{i\to +\infty}h_{\mu_i}(a): \text{each }\mu_i\text{ is an } a\text{-invariant probability measure on }X_d,\\
    \text{and } \mu_i\to 0\text{ in the weak-* topology as } i\to +\infty\}.
        \end{aligned}
    \end{equation*}

{\begin{lemma}\label{L:Ron}
    Let $a=\diag(e^{t_1},\dots,e^{t_d})\in A$. 
    Then $$h_{\infty}(a)\leq h_{top}(a)-\sum_{i=1}^d\max\{0,t_i\}.$$
\end{lemma}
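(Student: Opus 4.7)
The plan is to follow the strategy for bounding entropy in the cusp developed in \cite{ron23} (and initiated by Einsiedler--Kadyrov--Pohl). First, by the very definition of $h_\infty(a)$, pick a sequence of $a$-invariant probability measures $\mu_i$ on $X_d$ with $\mu_i\to 0$ in the weak-$*$ topology and $h_{\mu_i}(a)\to h_\infty(a)$. After passing to a subsequence we may arrange a sequence $R_i\to\infty$ (with $K_R\subset X_d$ the $R$-thick part) for which $\mu_i(X_d\setminus K_{R_i})\to 1$, so that each $\mu_i$ is supported, up to a negligible set, in a highly cuspidal neighborhood.

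Next, invoke an appropriate Margulis-type height function $\varphi:X_d\to[1,\infty)$ which is proper (its level sets $\{\varphi\le R\}$ form such a compact exhaustion) and is built in the standard way from reciprocals of norms of primitive sublattices of the underlying lattice in $\RR^d$, in the spirit of the $\alpha$-functions of Eskin--Margulis. The key property we need --- exactly the one established in \cite{ron23} for the full $A$-action --- is a pointwise contraction of the form
\begin{equation*}
\varphi(ax)\le e^{-\sum_{i=1}^d \max\{0,\,t_i\}}\,\varphi(x)+C
\end{equation*}
holding on a suitable cuspidal region, reflecting the fact that the maximum rate at which a primitive sublattice can be contracted by $\Ad(a)$ is precisely $\sum_{i=1}^d\max\{0,t_i\}$ (this exponent being the sum of the positive eigenvalues of $a$ as it acts on the exterior powers of $\RR^d$).

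With this in hand one applies the Brin--Katok / Bowen-covering description of the measure-theoretic entropy of $\mu_i$. Because of the contraction of $\varphi$ just stated, a set of near-full $\mu_i$-measure inside the cuspidal region $\{\varphi\ge R_i\}$ can be covered by Bowen $(n,\eps)$-balls for $a$ using at most
\begin{equation*}
\exp\!\Bigl(n\bigl(h_{top}(a)-\textstyle\sum_{i=1}^d\max\{0,t_i\}\bigr)+o(n)\Bigr)
\end{equation*}
balls as $n\to\infty$; the crucial point is that the contraction rate $\sum_i\max\{0,t_i\}$ is subtracted directly from the naive upper bound $e^{n\,h_{top}(a)}$ produced by the formula $h_{top}(a)=\sum_{\lambda\in\sigma_a}\log^+\lambda\cdot\dim E_\lambda$. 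Passing to the limit in $n$ yields $h_{\mu_i}(a)\le h_{top}(a)-\sum_{i=1}^d\max\{0,t_i\}+o(1)$ as $i\to\infty$, and the definition of $h_\infty(a)$ gives the desired inequality.

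The main obstacle is the covering estimate in the last step: extracting the precise entropy deficit $\sum_{i=1}^d\max\{0,t_i\}$, rather than merely some positive amount, requires careful bookkeeping of how Bowen balls intersect cuspidal neighborhoods and of how the expansion rates of $a$ along the various unstable root spaces trade off against the contraction of $\varphi$. Fortunately this analysis is already carried out in \cite{ron23} for the $A$-action on $X_d$, and its conclusions specialize directly to the one-parameter subgroup generated by $a$ considered here.
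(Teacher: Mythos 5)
The paper's entire proof of this lemma is the citation ``See \cite[Theorem 1.3]{ron23}'', and your proposal ultimately does the same thing: the one genuinely hard step (extracting the exact entropy deficit $\sum_{i=1}^d\max\{0,t_i\}$ from the Bowen-ball covering of the cuspidal region) is explicitly deferred to \cite{ron23}. So your argument is correct and is essentially the paper's approach, with a reasonable sketch of what lies inside the cited black box added on top.
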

\begin{proof}
    See \cite[Theorem 1.3]{ron23}.
\end{proof}
}

\begin{proposition}\label{nonzero}
	Let $a=\diag(e^{t_1},\dots,e^{t_d})\in A$, and let $\nu$ be an $a$-invariant probability measure on $X_d$ with $$h_{\nu}(a)>h_{top}(a)-\sum_{i=1}^d\max\{0,t_i\}.$$ Then any weak-* limit $\mu$ of 
	$\mu_T$ as $T\to +\infty$ is nonzero.
\end{proposition}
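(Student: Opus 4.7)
The plan is to argue by contradiction using Lemma~\ref{L:Ron}. Suppose that along the chosen subsequence $T_i\to+\infty$ the measures $\mu_{T_i}$ converge weak-$*$ to $0$, and I will derive a contradiction with the entropy hypothesis $h_\nu(a)>h_{top}(a)-\sum_{i=1}^d\max\{0,t_i\}$.

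The first step is to check that each $\mu_T$ is itself an $a$-invariant probability measure on $X_d$ satisfying
$$h_{\mu_T}(a)=h_\nu(a).$$
Each $(g^2_t)_*\nu$ is a probability measure because $g^2_t$ is a homeomorphism of $X_d$; and because $F_1$ and $F_2$ commute, $g^2_t$ commutes with $a$, so $(g^2_t)_*\nu$ is $a$-invariant and its measure-theoretic entropy under $a$ equals $h_\nu(a)$. Since measure-theoretic entropy is affine on the convex set of $a$-invariant probability measures and $\mu_T$ is a continuous convex combination of the $(g^2_t)_*\nu$, $t\in[0,T]$, one obtains $h_{\mu_T}(a)=h_\nu(a)$ for every $T>0$.

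Now $\{\mu_{T_i}\}$ is a sequence of $a$-invariant probability measures on $X_d$ converging weak-$*$ to $0$, so by the very definition of $h_\infty(a)$,
$$h_\infty(a)\;\geq\;\limsup_{i\to+\infty}h_{\mu_{T_i}}(a)\;=\;h_\nu(a).$$
Combining this with Lemma~\ref{L:Ron} gives
$$h_\nu(a)\;\leq\;h_\infty(a)\;\leq\;h_{top}(a)-\sum_{i=1}^d\max\{0,t_i\},$$
which contradicts the hypothesis.

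The only substantive (but standard) technical point is the affineness identity $h_{\mu_T}(a)=h_\nu(a)$ for a continuous convex integral of $a$-invariant measures; if one prefers to avoid invoking affineness for such integrals directly, one can instead approximate $\mu_T$ by the Riemann-sum averages $\tfrac{1}{n}\sum_{k=0}^{n-1}(g^2_{kT/n})_*\nu$ (for which affineness is just the usual finite statement), and pass to the limit using upper semi-continuity of entropy plus the obvious matching lower bound coming from the fact that each constituent measure has entropy exactly $h_\nu(a)$. The remainder of the argument is a clean application of Lemma~\ref{L:Ron}, which does all the heavy lifting.
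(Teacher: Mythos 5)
Your proof is correct and follows essentially the same route as the paper: both establish $h_{\mu_T}(a)=h_\nu(a)$ via affineness (the paper says ``convexity'') of entropy and the fact that $a$ commutes with $F_2$, and both derive the contradiction by combining the definition of $h_\infty(a)$ with the bound of Lemma~\ref{L:Ron}. Your extra remark on justifying affineness for the continuous average via Riemann-sum approximation is a detail the paper leaves implicit, but it does not change the argument.
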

\begin{proof}
    Since $a\in F_1$ commutes with $F_2$, by the convexity of measure-theoretic entropy, we see that for any $T>0$, $h_{\mu_T}(a)=h_{\nu}(a)>h_{top}(a)-\sum_{i=1}^d\max\{0,t_i\}.$ 
    {Now let $\mu$ be a weak-* limit of $\mu_{T_i}$ along a sequence $T_i\to +\infty$. Suppose that $\mu=0$. Then it follows from Lemma \ref{L:Ron} that $\limsup_{i\to +\infty}h_{\mu_{T_i}}(a)\leq h_{\infty}(a)$. This is a contradiction.}
\end{proof}

{When the limit measure of \equ{E:ave} is nonzero,} 
we are able to show that taking limit of averages preserves the positive entropy contribution along some transversal direction. 
For that, let us 
review here some basic knowledge about leafwise measures along foliations for any $a$-invariant finite positive measure $\nu$ on $X$. More details can be found in \cite{EL10}. Let $U\leq H$ be a closed subgroup normalized by $a$.
Then, following \cite[Section 6]{EL10}, one can define  a system $\{\nu^U_x\}$ of Radon measures on $U$ which we will call the \textit{leaf-wise measures along $U$-foliations}; those  are determined uniquely up to proportionality and outside a set of measure zero.
We are going to normalize them so that $\nu^U_x(B^U_1)=1$. The \textit{entropy contribution of $U$ with respect to $\nu$} is an $a$-invariant measurable function on $X$ defined by $$
D_\nu(a,U)(x):=\lim\limits_{n\to +\infty}\frac{\log\nu^U_x(a^nB^U_1a^{-n})}{n}.
$$
When $\nu$ is $a$-ergodic, we see that $D_\nu(a,U)(\cdot)$ is constant almost everywhere, denoted by $D_\nu(a,U)$. In particular, when $U$ is the fastest Lyapunov subgroup, 
that is, the Lyapunov subgroup corresponding to the highest weight of $\Ad(a)$, we have $D_\nu(a,U)=D^1_\nu(a)$.

\begin{lemma}\label{L:posentropy}
    Let $a\in A$,   $F_2=\{g_t^2\}_{t\in\RR}\subseteq A$, 
    $U\leq H$  a one-dimensional subgroup normalized by $a$ and $F_2$, and $\nu$   an $a$-invariant, ergodic probability measure on $X_{\neww{d}}$ 
    with $D_{\nu}(a,U)>0$. If along a subsequence of $T\to +\infty$ the limit measure $\mu$ of $$\mu_T=\frac{1}{T}\int_0^T(g^2_t)_*\nu\mathrm{d}t$$ is nonzero, then $D_{\mu}(a,U)(\cdot)>0$ $\mu$-almost surely.
\end{lemma}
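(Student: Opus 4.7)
The plan is to track how the leaf-wise measures $\nu^U_x$ transform under the $F_2$-averaging in \equ{E:ave} and to argue that their non-triviality persists in the weak-$*$ limit $\mu$. Throughout, let $\alpha>1$ denote the scaling factor of $\Ad(a)|_U$ and $\beta_t>0$ the scaling factor of $\Ad(g_t^2)|_U$; these scalars exist because $U$ is one-dimensional, connected, and normalized by both $a$ and $F_2$.

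\textbf{Step 1 (push-forward preserves $D(a,U)$).} For each $t\in\RR$ set $\sigma_t:=(g_t^2)_*\nu$. The compatibility of leaf-wise measures with push-forward, together with $[a,g_t^2]=1$, gives $\sigma_t^U_{g_t^2 x}\propto (\Ad(g_t^2))_*\nu^U_x$ for $\nu$-a.e.\ $x$. Using $a^n B^U_1 a^{-n}=B^U_{\alpha^n}$ and the scaling of $\Ad(g_t^2)$ on $U$, a direct computation shows
\[
D_{\sigma_t}(a,U)(g_t^2 x)\;=\;\lim_{n\to\infty}\tfrac{1}{n}\log\nu^U_x\bigl(B^U_{\beta_t^{-1}\alpha^n}\bigr)\;=\;D_\nu(a,U)(x),
\]
so $D_{\sigma_t}(a,U)=D_\nu(a,U)>0$ for every $t$.

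\textbf{Step 2 (averaging preserves positivity, uniformly in $T$).} I would next show $D_{\mu_T}(a,U)>0$ for $\mu_T$-a.e.\ $x$, with a lower bound independent of $T$. One route is through the $a$-ergodic decomposition of $\mu_T$: since $\mu_T$ is a convex integral over $t\in[0,T]$ of the measures $\sigma_t$, each $a$-ergodic component is, on a suitable $a$-invariant set, absolutely continuous with respect to some $\sigma_t$, and hence inherits a nontrivial $U$-leaf-wise measure. An equivalent route is to invoke the linearity of the leaf-wise measure construction in the ambient measure (up to normalization), after which a convex combination of nontrivial $U$-leaf-wise measures remains nontrivial.

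\textbf{Step 3 (pass to the weak-$*$ limit; main obstacle).} Since $\mu\neq 0$, normalize $\tilde\mu:=\mu/\mu(X_d)$ to a probability measure; it suffices to prove $D_{\tilde\mu}(a,U)>0$ a.s. The subtlety is that leaf-wise measures are only defined up to proportionality, so rather than take limits of $(\mu_T)^U_x$ directly, I would work with the scalar ratios $(\mu_T)^U_x(B^U_{\alpha^n})/(\mu_T)^U_x(B^U_1)$. Using the machinery of \cite{EL10} (flow boxes, conditional measures on $U$-plaques, and martingale convergence along a measurable partition subordinate to $U$), one argues that along the given subsequence $T_i\to\infty$, and on a compact region where the mass of $\mu$ concentrates, these ratios converge for $\tilde\mu$-a.e.\ $x$ to the corresponding ratios for $\tilde\mu$. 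The uniform positive lower bound from Step 2 then transfers to $\tilde\mu$, giving $D_{\tilde\mu}(a,U)>0$ almost surely, hence $D_\mu(a,U)(\cdot)>0$ $\mu$-a.s. The hard part is controlling this convergence in the non-compact space $X_d$, where a portion of the mass of $\mu_T$ may drift to infinity; this must be handled by localizing on a fixed compact set on which $\tilde\mu$ has positive mass and exploiting the local nature of the leaf-wise measure construction.
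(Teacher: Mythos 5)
The paper does not prove this lemma directly: its ``proof'' is a citation to \cite[Theorem 3.1]{shi12}, with the remark that Shi's argument goes through for an arbitrary one-parameter subgroup of $A$ in place of $\diag(e^t,\dots,e^t,e^{-(d-1)t})$. Your Steps 1 and 2 are essentially sound. Since $a$ commutes with $g_t^2$ and both normalize the one-dimensional group $U$, each $\sigma_t=(g_t^2)_*\nu$ is again $a$-invariant and $a$-\emph{ergodic} with $D_{\sigma_t}(a,U)=D_\nu(a,U)>0$; and because the leaf-wise measures of $\mu_T$ agree almost everywhere with those of its $a$-ergodic components (which here are the $\sigma_t$ themselves), one gets $D_{\mu_T}(a,U)>0$ for $\mu_T$-a.e.\ $x$, uniformly in $T$. (Your phrasing that each ergodic component is ``absolutely continuous with respect to some $\sigma_t$'' is not literally correct --- ergodic components are in general mutually singular and singular to the ambient measure --- but the conclusion you want follows from the standard compatibility of leaf-wise measures with ergodic decomposition.)

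The genuine gap is in Step 3, which is exactly where the content of the lemma lies. Weak-$*$ convergence $\mu_{T_i}\to\mu$ does \emph{not} imply convergence, pointwise a.e.\ or otherwise, of the leaf-wise measures or of the ratios $(\mu_{T_i})^U_x(B^U_{\alpha^n})/(\mu_{T_i})^U_x(B^U_1)$ to the corresponding quantities for $\mu$: leaf-wise measures are only measurable in $x$ and are highly discontinuous as functionals of the ambient measure, and one can easily have $D_{\mu_{T_i}}(a,U)>0$ for all $i$ while the limit has atomic leaf-wise measures. Martingale convergence, which you invoke, concerns conditional expectations along a refining sequence of partitions for a \emph{fixed} measure; it gives no control across a sequence of different measures. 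The way Shi (and, similarly, Einsiedler--Katok--Lindenstrauss) closes this step is to encode the non-triviality of the leaf-wise measures in a quantitative form that \emph{is} stable under weak-$*$ limits: one shows that the conditional measures of $\mu_T$ on $U$-plaques of a fixed flow box are uniformly non-concentrated (e.g.\ do not put mass $1-\delta$ on a sub-plaque of relative length $\delta$, for fixed $\delta>0$ and on a set of uniformly positive measure), a condition expressible through integrals of compactly supported continuous functions and hence inherited by $\mu$; this is then upgraded to $D_\mu(a,U)>0$ a.e.\ using $a$-invariance and recurrence. Without an argument of this type, Step 3 as written does not close, and the proof is incomplete at its decisive point.
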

\begin{proof}
    The proof goes the same way as \cite[Theorem 3.1]{shi12}.
    Note that in \cite[Theorem 3.1]{shi12} it is assumed that $g^2_t=\diag(e^t,\cdots,e^t,e^{-(d-1)t})$, but the proof goes through similarly for any other one-parameter subgroup of $A$. 
\end{proof}

Let us now assume in addition that
\begin{itemize}
    \item $d=3$;
    \item $F_1^+$ is regular (that is, $\Lie(F_1^+)$ is contained in an open Weyl chamber).
\end{itemize}
We are going to show that any such nonzero limit measure $\mu$ has a positive portion of $A$-ergodic components in the class of Haar measure. {(Note that since $d=3$, {the group $A$ is as in  \equ{def-a}, and any such $\mu$ is invariant under the action of $\la F_1,F_2\ra=A$.})}


{In the following we introduce two ergodic decomposition of a finite $A$-invariant measure $\mu$ in an intrinsic way, with the aid of conditional measures. Let $\E_a$ (resp. $\E_A$) denote a countably-generated $\sigma$-algebra equivalent to the $\sigma$-algebra of $a$-invariant (resp. $A$-invariant) Borel subsets in $X_3$. It is known (see \cite[Section 5.14]{EL10}) that the family of conditional measures $\{\mu^{\E_a}_x:x\in X\}$ (resp. $\{\mu^{\E_A}_x:x\in X\}$) is the family of $a$-ergodic (resp. $A$-ergodic) components of $\mu$, where the ergodic decompositions are given by $$
\mu=\int_{X_3}\mu^{\E_a}_x\mathrm{d}\mu(x)=\int_{X_3}\mu^{\E_A}_x\mathrm{d}\mu(x).
$$ 
Note that {by definition} $\mu^{\E_a}_x$ (resp. $\mu^{\E_A}_x$) is always a probability measure. In particular, if {$\widehat{\mu} = \frac1{\mu(X)}\mu$} is the normalized probability measure of $\mu$, then we have $\mu^{\E_a}_x=\widehat{\mu}^{\E_a}_x$ {and} $\mu^{\E_A}_x=\widehat{\mu}^{\E_A}_x$.
}

\begin{lemma}\label{L:entropycon}
    Let $\mu$ be a finite $a$-invariant measure on $X_3$, and let $U$ be a closed subgroup of $H$ normalized by $a$. Then for $\mu$-almost $x\in X_3$ we have $$
    D_{\mu}(a,U)(x)\leq h_{\mu^{\E_a}_x}(a).
    $$
\end{lemma}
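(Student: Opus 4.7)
The plan is to reduce to the ergodic case, where the inequality $D_\nu(a,U) \leq h_\nu(a)$ is a well-known fact (see, e.g., \cite[Theorem 7.9]{EL10}), and then propagate the bound through the ergodic decomposition using compatibility of leafwise measures with conditional measures.

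First, since both sides of the claimed inequality are unchanged under scaling $\mu$ by a positive constant, we may as well assume $\mu$ is a probability measure (absorbing the total mass into the normalization of leafwise measures, which are defined only up to scalar and normalized by $\nu^U_x(B^U_1)=1$). The $a$-ergodic decomposition then reads
\[
\mu=\int_{X_3}\mu^{\E_a}_x\dd\mu(x),
\]
where each $\mu^{\E_a}_x$ is an $a$-invariant, $a$-ergodic probability measure on $X_3$.

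The key technical input is that leafwise measures along a subgroup normalized by $a$ are compatible with the ergodic decomposition: namely, for $\mu$-a.e. $x$, we have
\[
(\mu^{\E_a}_x)^U_y \ \propto\ \mu^U_y \quad\text{for } \mu^{\E_a}_x\text{-a.e. } y,
\]
and after normalizing both families by the condition of mass $1$ on $B^U_1$, the two coincide. This is precisely the content of the discussion in \cite[Section 6]{EL10} relating leafwise measures to measurable partitions subordinate to $U$-orbits; the point is that refining the conditioning $\sigma$-algebra by the $a$-invariant one does not change the (normalized) leafwise measures. Consequently $D_{\mu^{\E_a}_x}(a,U)(y)=D_\mu(a,U)(y)$ for $\mu^{\E_a}_x$-a.e. $y$, for $\mu$-a.e. $x$. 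By $a$-ergodicity of $\mu^{\E_a}_x$ the left-hand side is $\mu^{\E_a}_x$-a.s. equal to a constant $D_{\mu^{\E_a}_x}(a,U)$.

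To close, I would invoke the ergodic case: $D_{\mu^{\E_a}_x}(a,U)\le h_{\mu^{\E_a}_x}(a)$. Pulling everything back, for $\mu$-a.e.\ $x$,
\[
D_\mu(a,U)(x) \ =\ D_{\mu^{\E_a}_x}(a,U) \ \le\ h_{\mu^{\E_a}_x}(a),
\]
which is the desired inequality. The main obstacle I anticipate is bookkeeping the compatibility of normalized leafwise measures with the $a$-ergodic decomposition cleanly enough that the equality $D_\mu(a,U)(x)=D_{\mu^{\E_a}_x}(a,U)$ holds $\mu$-almost everywhere; once this is in hand, the rest is a direct application of the ergodic-case inequality.
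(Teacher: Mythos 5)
Your argument is correct, but it takes a different route from the paper: the paper disposes of this lemma with a one-line citation to \cite[Theorem 7.6(ii)]{EL10}, which is stated there for general (not necessarily ergodic) $a$-invariant measures and asserts precisely the bound $D_\mu(a,U)(x)\le h_{\mu^{\E_a}_x}(a)$. What you do instead is essentially reconstruct the reduction that underlies that theorem: pass to the $a$-ergodic decomposition, use the compatibility of normalized leafwise measures with the ergodic components, and invoke the ergodic-case inequality. The one step you flag as a potential obstacle --- that $(\mu^{\E_a}_x)^U_y$ agrees with $\mu^U_y$ after normalization, for $\mu^{\E_a}_x$-a.e.\ $y$ and $\mu$-a.e.\ $x$ --- is itself a known result in \cite[Section 6]{EL10} (it rests on the fact that, modulo $\mu$-null sets, $U$-orbits for $U$ inside the unstable horospherical subgroup are contained in atoms of the $a$-invariant $\sigma$-algebra, by a Hopf-type argument), so your argument does close. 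Two minor points: the ergodic-case inequality you want is the monotonicity $D_\nu(a,U)\le D_\nu(a,H)=h_\nu(a)$, which is part of \cite[Theorems 7.6 and 7.9]{EL10} rather than Theorem 7.9 alone; and your normalization remark is fine since, as the paper notes, $\mu^{\E_a}_x$ is a probability measure regardless of the total mass of $\mu$, while the leafwise measures are fixed by the condition $\mu^U_x(B^U_1)=1$. The trade-off is transparency versus economy: your version makes visible exactly which structural facts about leafwise measures are being used, at the cost of re-proving (in sketch form) a result the paper is content to quote.
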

\begin{proof}
    See \cite[Theorem 7.6(ii)]{EL10}.
\end{proof}

\begin{proposition}\label{partialhaar}
	Let $\nu$ be an $F_1$-invariant, $a$-ergodic probability measure on $X_3$ with $h_{\nu}(a)>h_{top}(a)-\epsilon(\eta,a)$ as above. {Let $\mu$ be any nonzero weak-* limit of $\mu_{T}$ as $T\to +\infty$. Then there exists a measurable subset $X'\subseteq X_3$ with $\mu(X')>0$, such that for any $x\in X'$, the $A$-ergodic component $\mu^{\E_A}_x$ of $\mu$ equals $m_{X_3}$.
    }

\end{proposition}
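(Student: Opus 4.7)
The plan is to combine the positive entropy contribution of $\nu$ with the measure rigidity theorem of Einsiedler--Katok--Lindenstrauss \cite{EKL06}. Since $F_1^+$ is regular, the largest eigenvalue $|\lambda_1|$ of $\Ad(a)$ is simple, so the corresponding Lyapunov subgroup $U \subseteq H$ is a one-dimensional root subgroup, and in particular it is normalized by $a$ and by $F_2 \subseteq A$. The hypothesis $h_{\nu}(a) > h_{top}(a) - \epsilon(\eta, a)$ together with Lemma \ref{L:dimclose} gives $D_{\nu}(a, U) = D^1_{\nu}(a) > 0$, so Lemma \ref{L:posentropy} applies and yields $D_{\mu}(a, U)(\cdot) > 0$ $\mu$-almost surely.

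Set $\widehat{\mu} := \mu(X_3)^{-1}\mu$. The entropy contribution depends on leaf-wise measures only up to normalization, so we still have $D_{\widehat{\mu}}(a, U)(\cdot) > 0$ $\widehat{\mu}$-a.e.; Lemma \ref{L:entropycon} then gives $h_{\widehat{\mu}^{\E_a}_x}(a) > 0$ for $\widehat{\mu}$-a.e.\ $x$. Since $d=3$ and $F_1 \neq F_2$ are distinct one-parameter subgroups of the two-dimensional abelian group $A$, they generate $A$; therefore $\mu$ (and $\widehat{\mu}$) is $A$-invariant. By affinity of measure-theoretic entropy along the $A$-ergodic decomposition,
$$
h_{\widehat{\mu}}(a) \;=\; \int_{X_3} h_{\widehat{\mu}^{\E_A}_x}(a) \, \mathrm{d}\widehat{\mu}(x) \;>\; 0,
$$
so the set $X' := \{x \in X_3 : h_{\widehat{\mu}^{\E_A}_x}(a) > 0\}$ has $\widehat{\mu}(X') > 0$, equivalently $\mu(X') > 0$.

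For each $x \in X'$, the conditional measure $\widehat{\mu}^{\E_A}_x$ is an $A$-invariant, $A$-ergodic probability measure on $X_3$ on which the regular element $a \in A$ acts with positive entropy. The Einsiedler--Katok--Lindenstrauss measure rigidity theorem then forces $\widehat{\mu}^{\E_A}_x = m_{X_3}$, and since by convention $\mu^{\E_A}_x = \widehat{\mu}^{\E_A}_x$, we obtain $\mu^{\E_A}_x = m_{X_3}$ for every $x \in X'$, as desired.

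The main delicate point is guaranteeing that positive entropy contribution along a single one-dimensional direction survives the averaging-and-weak-limit procedure and then propagates all the way to the $A$-ergodic components. The regularity of $F_1$ enters in an essential way, ensuring that $U$ is exactly one-dimensional, that it is normalized by $F_2$, and that it captures the full value $D^1_{\nu}(a)$; Lemma \ref{L:posentropy} then transports the positivity to the limit, and the two successive affinity identities for entropy (over the $a$- and $A$-ergodic decompositions) feed exactly the hypothesis required by the EKL rigidity theorem.
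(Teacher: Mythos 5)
Your proposal is correct and follows essentially the same route as the paper: identify the fastest (one-dimensional, simple, $A$-normalized) Lyapunov subgroup $U=U_{13}$ using regularity of $F_1$, get $D_\nu(a,U)=D^1_\nu(a)>0$ from Lemma \ref{L:dimclose}, push positivity to the limit via Lemma \ref{L:posentropy}, bound the $a$-ergodic component entropies below via Lemma \ref{L:entropycon}, equate the two affinity identities over the $a$- and $A$-ergodic decompositions to find a positive-measure set of $A$-ergodic components with positive entropy, and conclude with \cite[Corollary 1.4]{EKL06}. No gaps.
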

\begin{proof}
    Without loss of generality, we may assume that 
    the diagonal components of $a$ are arranged in a strictly descending order. Then $U_{ij}=\exp(\RR E_{ij})$ for $(i,j)\in \{(1,2),(1,3),(2,3)\}$ are three Lyapunov subgroups of $H$. By the choice of $\epsilon=\epsilon(\eta,a)$, we see that $D_{\nu}(a,U)>0$ for the fastest Lyapunov subgroup $U=U_{13}$. 
    Moreover, it is clear that $U$ is normalized by $a$ and $F_2$. Let $\mu$ be a nonzero weak-${}^*$ limit of $\mu_{T_i}$ along a sequence $T_i\to +\infty$. 
   Then it follows from 
    Lemmas \ref{L:posentropy} and \ref{L:entropycon} that for $\mu$-almost $x\in X_3$ we have $$h_{\mu^{\E_a}_x}(a)\geq D_{\mu}(a,U)(x)>0.$$ 
    
    Let $\widehat{\mu}$ denote the normalized probability measure of $\mu$. Then by the convexity of measure-theoretic entropy, we have $$\int_{X_3}h_{\mu^{\E_A}_x}(a)\,\mathrm{d}\widehat{\mu}(x)=h_{\widehat{\mu}}(a)=\int_{X_3}h_{\mu^{\E_a}_x}(a)\,\mathrm{d}\widehat{\mu}(x)>0.$$
   In particular, the following measurable subset has positive $\widehat{\mu}$-measure: $$
    X':=\{x\in X_3:h_{\mu^{\E_A}_x}(a)>0\}.
    $$
    For each $x\in X'$, since $\mu^{\E_A}_x$ is an $A$-invariant, $A$-ergodic probability measure with $h_{\mu^{\E_A}_x}(a)>0$, we see from \cite[Corollary 1.4]{EKL06} that it equals $m_{X_3}$. This completes the proof.
\end{proof}

In summary, we always choose $\epsilon>0$ small enough such that Propositions \ref{nonzero} and \ref{partialhaar} hold, and in this case we briefly say that $h_{\nu}(a)$ is large enough.

\subsection{Step 3: Marstrand's slicing arguments}

In this subsection we first show that points with dense orbits are generic with respect to a probability measure whose average limit along the orbit has Haar components. The proof below is identical to that of \cite[Theorem 3.2]{bet15}.

\begin{proposition}\label{fulldense}
	Let $\nu$ be a $F_1$-invariant, $a$-ergodic probability measure on $X_3$, and $\mu$ be any weak-* limit of $\mu_T$ as $T\to +\infty$. Suppose that $\mu$ has a positive proportion of $A$-ergodic components equal to $m_{X_3}$. 
    Then $\nu\big(D(F_2^+)\big)=1$.
\end{proposition}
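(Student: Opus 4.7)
The plan is to reduce the statement, via $a$-ergodicity of $\nu$, to showing that the ``recurrent--dense'' subset $D^\infty\subseteq D(F_2^+)$ consisting of points whose $F_2^+$-orbit visits every nonempty open set at arbitrarily large times has positive $\nu$-measure, and then to extract this from the Haar part of $\mu$. Fix a countable basis $\{U_n\}_{n\in\NN}$ for $X_3$ and set
\[
D_n := \bigcup_{t>0} g_{-t}^2 U_n, \qquad D_n^\infty := \bigcap_{s>0} g_{-s}^2 D_n, \qquad D^\infty := \bigcap_n D_n^\infty,
\]
so that $D_n^\infty$ is exactly the set of points whose $F_2^+$-orbit visits $U_n$ at arbitrarily late times, and $D^\infty\subseteq D(F_2^+)$. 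Since $F_1$ commutes with $F_2$, the defining property of $D^\infty$ transforms equivariantly under $F_1$, so $D^\infty$ is $F_1$-invariant and in particular $a$-invariant; by $a$-ergodicity of $\nu$ we have $\nu(D^\infty)\in\{0,1\}$, and it suffices to prove $\nu(D^\infty)>0$.

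A time shift does not alter the property of visiting $U_n$ at arbitrarily large times, so each $D_n^\infty$, and therefore each finite intersection $D^{N,\infty}:=\bigcap_{n\le N}D_n^\infty$, is $F_2$-invariant in both directions; consequently $\mu_T(D^{N,\infty})=\nu(D^{N,\infty})$ for every $T>0$. To compare with the weak-$*$ limit $\mu$, I will approximate $D^{N,\infty}$ from above by the open set $D^N:=\bigcap_{n\le N} D_n\supseteq D^{N,\infty}$. Since $g_{-t}^2 D^N$ is decreasing in $t$ with intersection $D^{N,\infty}$, the function $t\mapsto\nu(g_{-t}^2 D^N)$ is bounded and decreases to $\nu(D^{N,\infty})$, so Cesaro averaging yields
\[
\mu_T(D^N) \;=\; \frac{1}{T}\int_0^T \nu(g_{-t}^2 D^N)\,\mathrm{d}t \;\xrightarrow{T\to\infty}\; \nu(D^{N,\infty}).
\]
Combined with the portmanteau bound $\mu(D^N)\le\liminf_i\mu_{T_i}(D^N)$ applied to the open set $D^N$, this gives $\mu(D^{N,\infty})\le\mu(D^N)\le\nu(D^{N,\infty})$, and letting $N\to\infty$ (so $D^{N,\infty}\downarrow D^\infty$) produces $\mu(D^\infty)\le\nu(D^\infty)$.

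It remains to lower-bound $\mu(D^\infty)$: by hypothesis $\mu\ge\mu(X')\,m_{X_3}$ for some $X'$ with $\mu(X')>0$, and by the Howe--Moore theorem the $F_2^+$-action on $(X_3,m_{X_3})$ is mixing, hence ergodic, so Birkhoff's theorem grants $m_{X_3}$-almost every point an equidistributed $F_2^+$-orbit, which in particular lies in $D^\infty$; thus $m_{X_3}(D^\infty)=1$ and $\mu(D^\infty)\ge\mu(X')>0$. Combining the two gives $\nu(D^\infty)\ge\mu(D^\infty)>0$, which by $a$-ergodicity upgrades to $\nu(D^\infty)=1$, proving $\nu(D(F_2^+))=1$. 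The principal obstacle is the step $\mu(D^\infty)\le\nu(D^\infty)$: weak-$*$ convergence does not directly control measures of $G_\delta$ sets, and it is essential to work with $D^\infty$ rather than $D(F_2^+)$ itself, since the $F_2$-bilateral invariance of the approximants $D^{N,\infty}$ (which may fail for $D(F_2^+)$ at exceptional points) is precisely what makes the explicit Cesaro calculation above yield the required bound.
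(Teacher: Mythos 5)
Your argument is correct, but it takes a genuinely different route from the paper's. The paper argues by contradiction: assuming $\nu\big(ND(F_2^+)\big)=1$, it decomposes $ND(F_2^+)$ into countably many pieces $ND(i,n)$ of points whose orbit closure stays at distance $\ge 1/n$ from $x_i$, passes (via Banach--Alaoglu) to a further subsequence along which each averaged piece converges to a measure $\mu_{(i,n)}$ giving zero mass to the $m_{X_3}$-full set $\bigcup_k g^2_k\cdot B^{X_3}(x_i,\frac1n)$, and concludes that $\mu=\sum\mu_{(i,n)}$ is singular to Haar, contradicting the hypothesis. You instead work directly with the $G_\delta$ core $D^\infty\subseteq D(F_2^+)$ of points whose forward $F_2^+$-orbit returns to every basic open set at arbitrarily late times: the bilateral $F_2$-invariance of the approximants $D^{N,\infty}$, the monotone Ces\`aro computation, and the portmanteau inequality for the open sets $D^N$ give $\mu(D^\infty)\le\nu(D^\infty)$, while the Haar component of $\mu$ and Birkhoff genericity for the ergodic $F_2^+$-action on $(X_3,m_{X_3})$ give $\mu(D^\infty)\ge\mu(X')>0$; $a$-invariance of $D^\infty$ and $a$-ergodicity of $\nu$ then upgrade this to $\nu(D^\infty)=1$. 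Your route is direct, avoids extracting a further subsequence, and uses only that $\mu$ dominates a positive multiple of Haar; the paper's route yields the slightly stronger intermediate fact that any averaged limit of a measure carried by $ND(F_2^+)$ is singular to Haar. Both use Moore's ergodicity theorem for $F_2$. Two points worth making explicit in a final write-up: $D_n^\infty$ is a countable (hence measurable) intersection because $g^2_{-s}D_n$ is monotone in $s$; and the $F_1$-invariance of $D^\infty$ rests on the observation that left translation by an element commuting with $F_2$ permutes the nonempty open sets, so the basis-dependent definition of $D^\infty$ agrees with the basis-free one.
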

\begin{proof}
Suppose to the contrary that $\nu\big(D(F_2^+)\big)<1$. Since $a$ commutes with $F_2^+$, the set $D(F_2^+)$ is $a$-invariant. Then it follows from $a$-ergodicity of $\nu$ that $\nu\big(D(F_2^+)\big)=0$, i.e. $\nu\big(ND(F_2^+)\big)=1$.

Let us consider a decomposition of $\nu|_{ND(F_2^+)}$ in the following manner. First we fix a countable dense subset $\{x_i\}_{i\in\NN}$ of $X_3$. For any two pairs $(i,n),(i',n')$ of natural numbers, we define $(i',n')<(i,n)$ if either $i'+n'<i+n$, or $i'+n'=i+n$ and $i'<i$. Now we define inductively that \begin{equation*}
    \begin{aligned}
        &ND(1,1):=\{x\in X_3:\dist(\overline{F_2^+x},x_1)\geq 1\},\\
        &ND(i,n):=\{x\in X_3:\dist(\overline{F_2^+x},x_i)\geq \frac{1}{n}\}\setminus \bigcup_{(i',n')<(i,n)}ND(i',n')
    \end{aligned}
\end{equation*}
for all $(i,n)\in\NN\times\NN$, where $\dist$ is the Riemannian metric on $X_3$. It is clear that all $\bigcup_{(i',n')<(i,n)}ND(i',n')$ are closed sets, and that $ND(F_2^+)=\bigsqcup_{(i,n)\in\NN\times\NN}ND(i,n)$. Then we may decompose $\nu$ into $\nu=\sum_{(i,n)\in\NN}\nu_{(i,n)}$ for $\nu_{(i,n)}=\nu|_{ND(i,n)}$.

Using {the} Banach--Alaoglu theorem, from the subsequence used to define $\mu$ we may choose a further subsequence $T_k\to +\infty$ such that for all $(i,n)\in\NN\times \NN$, the average \begin{equation*}
    \frac{1}{T_k}\int_0^{T_k}(g^2_t)_*\nu_{(i,n)}\mathrm{d}t
\end{equation*}
converges in the weak-* topology to an $F_2^+$-invariant measure $\mu_{(i,n)}$. By disjointness of the sets $\{ND(i,n):(i,n)\in\NN\times \NN\}$ we obtain that $\mu=\sum_{(i,n)\in\NN}\mu_{(i,n)}$.

    Now we claim that for any $(i,n)\in\NN\times\NN$, the measure $\mu_{(i,n)}$ is singular to the Haar measure $m_{X_3}$. In fact, since $\nu_{i,n}$-almost $x\in X_3$ satisfies $\dist(\overline{F_2^+x},x_i)\geq \frac{1}{n}$, it follows that for any $t>0$ the measure $(g^2_t)_*\nu_{(i,n)}$ gives zero mass to the open ball $B^{X_3}(x_i,\frac{1}{n}):=\{y\in X_3:\dist(y,x_i)<\frac{1}{n}\}$, so does the limit measure $\mu_{(i,n)}$. By $F_2^+$-invariance of $\mu_{(i,n)}$ we see that 
\begin{equation*}
    \bigcup_{k\in\NN} g^2_k\cdot B^{X_3}(x_i,\frac{1}{n})
\end{equation*}
is a $\mu_{(i,n)}$-null set. However, it is also a $m_{X_3}$-full set in view of the ergodicity of $g^2_1$. This verifies the claim.

Finally, it follows from the claim that $\mu=\sum_{(i,n)\in\NN}\mu_{(i,n)}$ is singular to 
$m_{X_3}$. This contradicts the assumption that $\mu$ has a positive proportion of $A$-ergodic components equal to $m_{X_3}$. The proof is complete.
\end{proof}

Then we apply previous discussions to countably many directions with dense orbits: let $F_{2,j}\,(j\in\NN)$ be one-parameter subgroups contained in the same split Cartan subgroup of $\SL_3(\RR)$ as $F_1$, such that $F_1\neq F_{2,j},\;\forall j\in\NN$. 

\begin{corollary}\label{countfulldense}
    Let $\nu$ be an $F_1$-invariant, $a$-ergodic probability measure on $X_3$ with large enough entropy. Then $\nu\left(\bigcap_{j\in\NN}D(F^+_{2,j})\right)=1$.
\end{corollary}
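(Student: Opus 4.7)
The plan is to reduce Corollary \ref{countfulldense} to a single application of Proposition \ref{fulldense} for each index $j\in\NN$, and then to take a countable intersection. Fix $j\in\NN$. Since $F_1$ and $F_{2,j}$ both sit inside the split Cartan subgroup $A$ of $\SL_3(\RR)$, they commute, $F_1^+$ is regular by hypothesis, and $\nu$ is $F_1$-invariant and $a$-ergodic with entropy large enough, so the hypotheses of Propositions \ref{nonzero} and \ref{partialhaar} are met for the pair $(F_1,F_{2,j})$.

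Consider the averages
\[
\mu_T^{(j)}:=\frac{1}{T}\int_0^T (g^{2,j}_t)_*\nu\dd t,
\]
where $\{g^{2,j}_t\}_{t\in\RR}=F_{2,j}$. By the Banach--Alaoglu theorem we may extract a subsequence $T_k\to+\infty$ along which $\mu_{T_k}^{(j)}$ converges in the weak-${}^*$ topology to some measure $\mu^{(j)}$. By Proposition \ref{nonzero} (applied with $F_2=F_{2,j}$ and the assumption that $h_\nu(a)$ is large enough so that the required entropy bound holds) the limit $\mu^{(j)}$ is nonzero. Then Proposition \ref{partialhaar} yields a measurable set $X'_j\subseteq X_3$ with $\mu^{(j)}(X'_j)>0$ such that for every $x\in X'_j$ the $A$-ergodic component $(\mu^{(j)})_x^{\E_A}$ equals the Haar measure $m_{X_3}$. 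Hence $\mu^{(j)}$ has a positive proportion of $A$-ergodic components equal to $m_{X_3}$, which is exactly the hypothesis required by Proposition \ref{fulldense}.

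Applying Proposition \ref{fulldense} (with $F_2^+$ replaced by $F_{2,j}^+$) we conclude $\nu\bigl(D(F_{2,j}^+)\bigr)=1$ for every $j\in\NN$. Since the countable intersection of full-measure sets is of full measure, we obtain
\[
\nu\Bigl(\bigcap_{j\in\NN}D(F_{2,j}^+)\Bigr)=1,
\]
as desired. There is no serious obstacle here: all the substantive work has been done in Propositions \ref{nonzero}, \ref{partialhaar} and \ref{fulldense}; the only thing one must verify is that the hypotheses of these propositions, in particular the commutation of $F_1$ with $F_{2,j}$ and the regularity of $F_1^+$, hold uniformly in $j$, which is immediate from the standing assumptions.
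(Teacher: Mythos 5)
Your proposal is correct and follows exactly the paper's route: the paper's proof of this corollary is a one-line citation of Propositions \ref{nonzero}, \ref{partialhaar} and \ref{fulldense}, applied for each $j$ and combined via countable intersection of full-measure sets, which is precisely what you spell out. Your verification that the hypotheses (commutation, regularity of $F_1$, and the ``large enough entropy'' convention fixed just before the corollary) hold uniformly in $j$ is the right and only point that needs checking.
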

\begin{proof}
    This follows from Propositions \ref{nonzero}, \ref{partialhaar} and \ref{fulldense}.
\end{proof}

Finally, applying  Marstrand's slicing theorem as in \cite[Section 4]{bet15}  yields the desired conclusion.

\begin{proof}[Proof of Theorem \ref{MainThm1}]
For any $h\in H^0$, since $h$ commutes with the flow $F_1$, the pushforward $(L_h)_*\nu$ of $\nu$ under the left translation by $h$ is an $F_1$-invariant probability measure, which is supported on $$h\cdot B(F_1,K_\epsilon)=B(F_1,h\cdot K_\epsilon)\subseteq B(F_1),$$ and has the same entropy as $\nu$ does. Then it follows from Corollary \ref{countfulldense} that $(L_h)_*\nu(D)=1$ where $D:=\bigcap_{j\in\NN}D(F_{2,j}^+)$. Equivalently, for any $h\in H^0$ and $\nu$-almost $x\in X_3$, we have $hx\in B(F_1)\cap D$. Applying Fubini's theorem gives a $\nu$-full subset $C_1\subseteq X_3$ such that for all $x\in C_1$ and Haar-almost $h\in H^0$, we have $hx\in B(F_1)\cap D$. In particular, for any $x\in C_1$ 
it holds that
$$
\dim(B(F_1)\cap D\cap H^0x)=\dim(H^0).
$$

By Lemma \ref{L:localdim}, there exists two $\nu$-full subsets $C_2,C_3\subseteq X_3$ such that \linebreak 
$\dim(C_1\cap Hy)\geq d^+(\nu)$ for all $y\in C_2$, and $\dim(C_2\cap H^-z)\geq d^-(\nu)$ for all $z\in C_3$. It follows from Marstrand's slicing theorem that $$
\dim(C_1\cap HH^-z)\geq d^+(\nu)+d^-(\nu)
$$
for all $z\in C_3$, and furthermore that $$
\dim(B(F_1)\cap D\cap H^0HH^-z)\geq \dim(H^0)+d^+(\nu)+d^-(\nu)
$$
for all $z\in C_3$. 
By applying Lemma \ref{L:dimclose} to the measure $\nu$, we conclude that 
\begin{equation*}
\begin{aligned}
    \dim(B(F_1)\cap D)&\geq \dim(B(F_1)\cap D\cap H^0HH^-z)\\
    &\geq \dim(H^0)+\dim(H^+)+\dim(H^-)-2\eta\\
    &=\dim(G)-2\eta.\\
    \end{aligned}
\end{equation*}
The arbitrariness of $\eta>0$ completes our proof.
\end{proof}

\section{A nonconstructive argument: Proof of Theorem \ref{Main:ans0}} \label{nonconstr}
In this section, we first present a more general method to find a dense subset of $X_3$ in which each point has a $\lambda$-fiberwise nondivergent $A_C$-orbit and escapes countably many closed subsets with empty interior. Then Theorem \ref{Main:ans0} will be deduced from a combination of this method and Mahler's compactness criterion.

{First we recall some basic notions in Baire categories. In a topological space $X$, a set $E$ is called \textit{meager} (or \textit{of first category}), if it is a countable union of nowhere dense subsets; it is called \textit{non-meager} (or \textit{of second category}) otherwise. We say that a set $E$ is \text{locally non-meager} if its intersection with any non-empty open set is non-meager. This is equivalent to saying that $E$ minus any meager set is dense.}

\begin{proposition}\label{P:nondiv}
    Let $C\subset\mathfrak{a}$ be an open cone\ignore{which is not a half-plane}, and let $\lambda$ be a linear functional compatible with $C$ that is not a root. 
    Then the set of points with $\lambda$-fiberwise nondivergent $A_C$-orbits is locally non-meager.
\end{proposition}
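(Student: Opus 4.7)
I would approach this via the Baire category theorem applied in the Polish space $X_3$. The plan is to exhibit, for each non-empty open $U\subseteq X_3$, a dense $G_\delta$ subset of $U$ contained in $N$ (the set of points with $\lambda$-fiberwise nondivergent $A_C$-orbit); local non-meagerness then follows at once. For a compact $K\subseteq X_3$ with non-empty interior and $m\in\NN$, the set
\[
O_{K,m}:=\{x\in X_3:\exists\,T>m,\;L_{C,\lambda,T}x\subseteq K^{\circ}\}
\]
is open in $X_3$ (because $L_{C,\lambda,T}$ is compact, depends continuously on $T$, and $K^{\circ}$ is open), and $G_K:=\bigcap_{m\in\NN}O_{K,m}$ is a $G_\delta$ subset of $N$. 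The task therefore reduces to finding, for each $U$, a compact $K=K_U$ such that each $O_{K,m}$ is dense in $U$; Baire then delivers the desired dense $G_\delta$ inside $U$.

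The key structural input is the non-root hypothesis on $\lambda$. Pick $w_0\in\ker\lambda\smz$; since $\lambda$ is not a root, $w_0$ is regular in $\mathfrak{a}$, so $W_0:=\exp(\RR w_0)$ is a regular one-parameter Cartan flow on $X_3$. Fixing $v_0\in C$ with $\lambda(v_0)=1$ and setting $a_T:=\exp(Tv_0)$, the abelianness of $A$ yields the factorization $L_{C,\lambda,T}=a_T\cdot W_0^{I_T}$, where $I_T\subseteq\RR$ is an interval of length proportional to $T$. Consequently
\[
L_{C,\lambda,T}x\subseteq K^{\circ}\iff W_0^{I_T}(a_Tx)\subseteq K^{\circ},
\]
and density of $O_{K,m}$ in $U$ reduces to the dynamical claim that, for every open $V\subseteq U$ and every $m\in\NN$, there exist $T>m$ and $y\in a_TV$ with $W_0^{I_T}y\subseteq K^{\circ}$.

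I would establish this claim by combining the abundance of bounded $W_0$-orbits inside horospherical neighborhoods (Lemma \ref{L:KM1.5} applied to $W_0^{+}$ and, symmetrically, to $W_0^{-}$) with a recurrence property of the transverse one-parameter subgroup $\{a_T\}$ to guarantee that the translates $a_TV$ meet such bounded-orbit neighborhoods for arbitrarily large $T$. Concretely, near any base point $z\in a_TV$ one finds a thick family of perturbations $hz$ (with $h$ in the $W_0$-stable horospherical subgroup of $z$) for which $W_0\cdot hz$ stays inside a prescribed compact set; by transversality of the $W_0^{\pm}$-horospherical subgroups to the centralizer of $A$, these perturbations trace out an open subset of $a_TV$ that is then mapped by $W_0^{I_T}$ into $K^{\circ}$.

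The principal obstacle is the uniform choice of $K=K_U$: the Kleinbock--Margulis-type perturbations supply compacts that depend on the base point, and a single compact must be seen to absorb all relevant orbits uniformly as $V$ varies within $U$ and $m$ grows. I expect that this uniformity follows from the non-root assumption, which makes both $W_0$ and the transverse direction of $\{a_T\}$ genuinely regular Cartan flows and permits a quantitative form of the perturbation estimate, depending only on the diameter of $U$ rather than the specific base point. This uniform estimate, plugged into the Baire framework above, should complete the proof.
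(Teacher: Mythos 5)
Your Baire-category packaging is sound (each $O_{K,m}$ becomes open once you replace $L_{C,\lambda,T}$ by its closure, which is compact, and a dense $G_\delta$ inside every open set would indeed give local non-meagerness), and your factorization of the fiber $L_{C,\lambda,T}$ as a translate by $\exp(T\bw_0)$ of a segment of the regular flow $\exp(\RR\bv_0)$ with $\bv_0\in\ker(\lambda)$ is exactly the parametrization the paper uses. But there is a genuine gap at precisely the point you flag as the ``principal obstacle'': you never establish the density claim that for every open $V$ and arbitrarily large $T$ there exists $y\in a_TV$ whose length-$c_0T$ orbit segment under $\exp(\RR\bv_0)$ lies in a \emph{single fixed} compact set $K$. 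Lemma \ref{L:KM1.5} cannot supply this: it controls only forward orbits of perturbations along the unstable horospherical subgroup, the compact set it produces depends on the base point, and, most importantly, it says nothing about whether such base points can be located inside the moving open sets $a_TV$ for an unbounded set of $T$. The ``recurrence property of the transverse one-parameter subgroup'' that you invoke is not a soft consequence of regularity of $\lambda$; it is the hard part of the proposition.

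The paper closes exactly this gap by invoking Theorem \ref{MainThm1}: since $\RR^+\bv_0$ lies in an open Weyl chamber, there exists a single point $x_0$ whose full two-sided orbit under $a_s=\exp(s\bv_0)$ is bounded, with closure $K_0$, and whose backward orbit under $g_t=\exp(t\bw_0)$ is dense. For any open $V$ one then finds arbitrarily large $T$ with $g_{-T}x_0\in V$, and $L_{C,\lambda,T}\,g_{-T}x_0=\{a_sx_0:s\in(0,c_0T)\}\subseteq K_0$ holds automatically, so the fixed compact set and the recurrence are delivered simultaneously by this one point; a nested-neighborhood (Cantor intersection) argument, avoiding the pieces of the given meager set one at a time, then finishes the proof. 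Producing such an $x_0$ is the content of the entropy argument of Section \ref{entropy}. Without citing Theorem \ref{MainThm1}, or proving an equivalent simultaneous bounded-plus-dense statement, your argument does not go through.
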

\begin{proof}
    We choose and fix a vector $\bv_0\in\ker(\lambda)\setminus \{0\}$ and a vector $\bw_0\in\partial C$ with $\lambda(\bw_0)=1$, such that the cone $C$ is contained in the cone with sides $\RR^+\bv_0$ and $\RR^+\bw_0$. Then the other side of the cone $C$, except $\RR^+\bw_0$, is given by $\RR^+(c_0\bv_0+\bw_0)$ for some $c_0>0$. Using this parametrization, we have $$
L_{C,\lambda,T}=\{\exp(T\bw_0+s\bv_0): s\in (0,c_0T)\}.
$$

	Let us write $a_s=\exp(s\bv_0)$ and $g_t=\exp(t\bw_0)$ for $s,t\in\RR$. Since $\bv_0\in\ker(\lambda)$ is not parallel with any Weyl walls, the ray $\Lie(\{a_s\}_{s>0})=\RR^+\bv_0$ is contained in an open Weyl chamber. Then it follows from Theorem \ref{MainThm1} that the set \begin{equation*}
		\{x\in X_3:
		\{a_sx\}_{s\in\RR} \text{ is bounded and } \{g_tx\}_{t<0} \text{ is dense}
		\}
	\end{equation*}
	is nonempty. We choose and fix any point $x_0$ in the above set, and denote by $K$ a compact neighborhood of $K_0:=\overline{\{a_sx_0\}_{s\in\RR}}$. Note that 
	\begin{equation*}
		L_{C,\lambda,T}=\{a_sg_T:s\in (0,c_0T)\}.
	\end{equation*}
	From this we shall construct a locally non-meager subset of points with $\lambda$-fiberwise nondivergent $A_C$-orbits.
	
    We choose and fix any nonempty open subset $\cN_1$ in $X_3$. Let $Z\subseteq X_3$ be a meager set, namely, $Z=\bigcup_{i\in\NN}Z_i$ where each $Z_i$ is nowhere dense in $X_3$. Then by density there exists some $T_1>0$ with $y_1:=g_{-T_1}x_0\in \cN_1\setminus Z_1$. It follows that the orbit \begin{equation*}
		L_{C,\lambda,T_1}y_1=\{a_sx_0:s\in (0,c_0T_1)\}
	\end{equation*}
	is contained in $K_0$. By continuity we may find an open neighborhood $\cN_2$ of $y_1$ (depending on $T_1$), such that $\overline{\cN_2}\subseteq \cN_1\setminus Z_1$ and that \begin{equation*}
		z\in \cN_2\Longrightarrow L_{C,\lambda,T_1}z\subseteq K.
	\end{equation*}
	Then by density there exists some $T_2>2T_1$ with $y_2:=g_{-T_2}x_0\in \cN_2\setminus Z_2$.
	It follows that the orbit \begin{equation*}
		L_{C,\lambda,T_2}y_2=\{a_sx_0:s\in (0,c_0T_2)\}
	\end{equation*}
	is contained in $K_0$. By continuity we may find an open neighborhood $\cN_3$ of $y_2$ (depending on $T_2$), such that $\overline{\cN_3}\subseteq \cN_2\setminus Z_2$ and that \begin{equation*}
		z\in \cN_3\Longrightarrow L_{C,\lambda,T_2}z\subseteq K.
	\end{equation*}
	
	Continuing this process gives a point $z_*$ in $\bigcap_{i\in \NN}\overline{\cN_i}=\bigcap_{i\in \NN}\cN_i$ satisfying the following properties:
	\begin{itemize}
		\item for any $i\in\NN$, the orbit $L_{C,\lambda,T_i}z_*$ lies in $K$.
		\item for any $i\in\NN$, the point $z_*$ doesn't lie in $Z_i$.
	\end{itemize}
	In particular this means that $z_*\in \cN_1$ has a $\lambda$-fiberwise nondivergent $A_C$-orbit and avoids $Z=\bigcup_{i\in\NN}Z_i$. By arbitrariness of $\cN_1$ the proof is complete.
\end{proof}

Next, we shall show that points with bounded $F^+$-orbits for some ray in $A$ are contained in a meager set of $X_3$.

\begin{lemma}\label{L:cpthaus}
    Let $K\subseteq X_3$ be a compact subset, and write $$
    Z:=\{x\in X_3:F^+x\subseteq K\text{ for some ray }F^+\subseteq A\}.
    $$
    Then $Z$ is a closed subset with empty interior in $X_3$.
\end{lemma}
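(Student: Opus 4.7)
The statement has two parts: $Z$ is closed, and $Z$ has empty interior.

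For \emph{closedness}, the set of rays in $A$ is naturally parametrized by the unit sphere $S$ in the two-dimensional Lie algebra $\mathfrak{a}$, which is a compact circle. Given $x_n \to x$ in $X_3$ with each $x_n \in Z$, I would pick $\bv_n \in S$ realizing $x_n \in Z_{\bv_n} := \{y : \exp(t\bv_n) y \in K \text{ for all } t \geq 0\}$, extract a convergent subsequence $\bv_n \to \bv \in S$ using compactness, and use continuity of the flow action together with closedness of $K$ to conclude $\exp(t\bv) x \in K$ for every $t \geq 0$, whence $x \in Z_\bv \subseteq Z$.

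For \emph{empty interior}, let $U \subseteq X_3$ be a nonempty open set; I need $x \in U \setminus Z$. If $U \not\subseteq K$, then any $x \in U \setminus K$ works, since $x \in Z_\bv$ would force $x = \exp(0 \cdot \bv) x \in K$. So the main case is $U \subseteq K$, and here I would use a perturbation argument. For $x_0 \in U$, consider $x_\xi := \exp(\xi) x_0$ with $\xi \in \Lie(\SL_3(\RR))$ small, using the identity $\exp(t\bv) \exp(\xi) = \exp(\Ad(\exp(t\bv))\xi) \exp(t\bv)$. If $\xi$ has a nonzero component in every root space $\RR E_{ij}$ (a Zariski-open, hence dense condition), then for every $\bv \in S$ the component of $\Ad(\exp(t\bv))\xi$ in the dominant weight space of the unstable horospherical subalgebra $V^+(\bv)$ grows exponentially in $t$. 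Since the one-parameter unipotent subgroup generated by the dominant $E_{ij}$ acts ergodically on $X_3$ by Moore's theorem, for Lebesgue-generic times $t$ the point $\exp(t\bv) x_0 \in K$ is generic for this unipotent flow, so its unipotent orbit leaves $K$; matching scales, one finds a $t_\bv > 0$ such that $\exp(t_\bv \bv) x_\xi \notin K$, giving $\bv \notin T(x_\xi) := \{\bw : F^+_\bw x_\xi \subseteq K\}$.

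The \emph{main obstacle} will be enforcing this escape uniformly in $\bv \in S$: I need a single small $\xi$ such that for every ray the escape time from $K$ is finite. My plan is to exploit compactness of $S$, continuous dependence on $\bv$ of the escape time, and the fact that $\mathfrak{a}$ has only six chambers (so only six dominant-weight directions) and six singular walls (finitely many singular rays to treat separately). As a fallback, I would attempt an iterative Baire-category argument: starting from $x_0 \in U$, use upper semi-continuity of $x \mapsto T(x)$ (a consequence of the same compactness argument used for closedness) to perturb $x_0$ along $V^+(\bv)$ for $\bv \in T(x_0)$, removing an open neighborhood of $\bv$ from $T$, and iterate; compactness of $S$ together with a diagonal argument should eventually produce $x \in U$ with $T(x) = \emptyset$, i.e., $x \notin Z$.
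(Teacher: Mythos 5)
Your closedness argument is exactly the paper's: parametrize rays by unit vectors in $\mathfrak{a}$, extract a convergent subsequence, and use continuity plus closedness of $K$. That half is fine.

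The empty-interior half has genuine gaps. The central one is the escape step: from the identity $\exp(t\bv)\exp(\xi)x_0=\exp\big(\Ad(\exp(t\bv))\xi\big)\exp(t\bv)x_0$ you conclude escape from $K$ because the conjugated displacement grows exponentially, invoking ``genericity'' of $\exp(t\bv)x_0$ for a unipotent flow. But a large unipotent element applied to a point of $K$ can perfectly well land back in $K$; largeness of the displacement in $G$ says nothing about distance in $G/\Gamma$. Moreover there is no reason the specific points $\exp(t\bv)x_0$ are generic for the unipotent flow (by Ratner, a one-parameter unipotent orbit in $X_3$ can be entirely contained in a compact homogeneous subspace), and even genericity would only control orbit averages, not the position of the unipotent orbit at the one specific time $e^{\chi(\bv)t}\xi_\chi$ that your computation forces on you. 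The second gap is the uniformity over $\bv\in S$, which you acknowledge but leave as a ``plan'' and a ``fallback''; as written this is not a proof.

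The paper avoids both problems by a different decomposition. Since $Z$ is closed, it suffices to write $Z$ as a finite union of closed sets with empty interior (over the six open Weyl chambers and the finitely many singular ray directions) --- so no single point need escape for \emph{all} rays at once. For a fixed singular ray, the set $Z''$ is closed and has zero Haar measure by ergodicity of that flow, hence empty interior. For the rays in a fixed open chamber, the paper exhibits an explicit dense set in the complement of $Z'$: after a rational conjugation, every rational point $bu\Gamma$ (via the Bruhat decomposition $G=BU\cdot\Gamma$) admits an integer vector $z$ with $uz=(0,0,q)^t$, and every ray in that chamber contracts the third coordinate, so $\|g_tuz\|\le e^{-t}q\to 0$ and Mahler's criterion gives unboundedness simultaneously for all rays in the chamber. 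I would redo your empty-interior argument along these lines rather than trying to repair the perturbation/genericity step.
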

\begin{proof}
We first show that $Z$ is a closed set. Suppose that $\{x_n\}_{n\in\ZZ}\subseteq Z$ has a limit point $x\in X_3$. Then there exists a sequence of unit vectors $\{\bv_n\}_{n\in\NN}\subseteq\mathfrak{a}$ such that $\exp(\RR^+\bv_n)x_n\subseteq K$ for each $n\in\NN$. By passing to a subsequence, we may assume that $\bv_n\to \bv$ as $n\to +\infty$ and $\bv$ is a unit vector in $\mathfrak{a}$. It follows that for any $t>0$, $$
K\ni \exp(t\bv_n)x_n\to \exp(t\bv)x\text{ as }n\to +\infty,
$$
which means that $F^+x\subseteq K$ for the ray $F^+:=\exp(\RR^+\bv)\leq A$. This verifies that $x\in Z$.

Next we show that $Z$ has empty interior. Let us decompose $\mathfrak{a}\setminus\{0\}$ into a disjoint union of open Weyl chambers and their walls. Since a finite union of closed subsets with empty interior still has empty interior, it suffices to show that for any open Weyl chamber $D_0\subseteq\mathfrak{a}$ and any unit vector $\bv\in\partial D_0$, the closed subsets \begin{equation*}
\begin{aligned}
    Z'&:=\{x\in X_3:F^+x\subseteq K\text{ for some ray }F^+\subseteq \exp(D_0)\},\\
    Z''&:=\{x\in X_3:F^+x\subseteq K\text{ for the ray }F^+=\exp(\RR^+\bv)\}
\end{aligned}
\end{equation*}
both have empty interior. It is clear that the interior of $Z''$ is empty, for $Z''$ has zero Haar measure by the ergodicity of $F^+$.

Now we claim that $X_3\setminus Z'$ contains all rational points $\{g\Gamma\in X_3:g\in\SL_3(\QQ)\}$. This is a dense subset in $X_3$, which implies that the interior of $Z'$ is empty as desired. In fact, by a rational conjugation, we may assume that $$
D_0=\{\diag(t_1,t_2,-t_1-t_2):t_1>t_2>-t_1-t_2\}.
$$
Let $B\leq G$ be the subgroup of lower triangular matrices, and $U\leq G$ be the subgroup of unipotent upper triangular matrices. Since $BU$ is Zariski open in $G$ and $\Gamma$ is Zariski dense in $G$, we have $G=BU\cdot \Gamma$, and hence that $$\{g\Gamma\in X_3:g\in\SL_3(\QQ)\}
=\{bu\Gamma\in X_3:b\in B(\QQ),u\in U(\QQ)\}.$$
For any ray $F^+=\{g_t\}_{t>0}\subseteq \exp(D_0)$ where $g_t=\exp(e^{\alpha t},e^{(1-\alpha)t},e^{-t})$ with $\alpha\in (\frac{1}{2},1)$, we see that $$
F^+(bu\Gamma)\text{ is unbounded}\Longleftrightarrow F^+(u\Gamma)\text{ is unbounded}.
$$
Moreover, for any $u\in U(\QQ)$, we may choose some $z=(p_1,p_2,q)^t\in\ZZ^2\times \NN$ such that $u\cdot z=(0,0,q)^t$, which implies that $$
\min_{z\in \ZZ^3\setminus \{0\}}\|g_tuz\|\leq e^{-t}q\to 0\text{ as }t\to +\infty.
$$
In view of Mahler's compactness criterion, this means that $F^+(u\Gamma)$ is unbounded. We conclude that for any rational point $x\in X_3(\QQ)$ and any ray $F^+\subseteq \exp(D_0)$, the orbit $F^+x$ is unbounded in $X_3$. This verifies the claim and hence completes the proof.
\end{proof}

\begin{proof}[Proof of Theorem \ref{Main:ans0}] Let $\{K_i\}_{i\in\NN}$ be a compact exhaustion of $X_3$, and for each $i\in\NN$ write $Z_i=\{x\in X_3:F^+x\subseteq K_i\text{ for some ray }F^+\subseteq A\}$. It is clear that $$
\{x\in X_3:F^+x\text{ is bounded for some ray }F^+\subseteq A\}=\bigcup_{i\in\NN}Z_i.
$$
It follows from Lemma \ref{L:cpthaus} that the above set is meager.
Then the conclusion follows from Proposition \ref{P:nondiv}. 
\end{proof}

\ignore{\section{Concluding remarks}
Capitalizing on Theorems \ref{MainThm00} and \ref{MainThm1}, we can propose the following two conjectures:

\begin{conjecture}\label{MainConj}
	Let $X=G/\Gamma$ be a homogeneous space, where $G$ is a {semisimple} Lie group and $\Gamma$ is {an irreducible}  lattice. Let $F_1\neq F_2$ be two commuting 
$\Ad$-diagonalizable one-parameter subgroups in $G$. Then the set $Eq(F_1)\cap B(F_2)$ is thick in $X$. 
\end{conjecture}

{Furthermore, we also consider a countable intersection of equidistributing and bounded subsets in the above conjecture. This is motivated by 
{the} work of Pollington \cite{P81}, where he considered a set of {real} numbers 
normal to every base from one countable set and to no base from another.}

\begin{conjecture}\label{MultiConj}
 Let $X=G/\Gamma$ be as in Conjecture \ref{MainConj}.
 Let $\{F_{1,i}\}_{i\in \NN}$ and $\{F_{2,j}\}_{j\in \NN}$ be two commuting families of 
 $\Ad$-diagonalizable one-parameter subgroups in $G$, such that $F_{1,i}\neq F_{2,j}$ for any $i,j\in \NN$. Then the set $$\bigcap_{i\in \NN}Eq(F_{1,i})\cap \bigcap_{j\in \NN}B(F_{2,j})$$ is thick in $X$. \end{conjecture}

Of course, analogues of Conjectures \ref{MainConj} and \ref{MultiConj} can also be formulated in terms of subsemigroups instead of subgroups. We note that even proving that the set $\bigcap_{j\in \NN}B(F_{j})$ is \new{thick} 
for a commuting family $\{F_j\}_{j\in\NN}$ of 
 $\Ad$-diagonalizable one-parameter subgroups of $G$ is a difficult task, achieved only in a few special cases (see \cite{AGK15,GW18}).

 \comm{Here we can list some other remarks and questions, if we feel like it.}}

\appendix
\section{Proof of Lemma \ref{L:entropydim}}\label{A:a}
In this section we verify Lemma \ref{L:entropydim} via a direct calculation of entropy. To begin with we define a {metric on the Lie algebra $\mathfrak{g}=\Lie(G)$ specific} to our needs. As in the beginning of Section \ref{horosph}, the adjoint action of any $\Ad$-non-quasi-unipotent element $a$ decomposes $\mathfrak{g}$ into a direct sum of real generalized eigenspaces: $$
\Lg=\bigoplus_{\lambda\in{\sigma_a}\cap\RR}(E_{\lambda}\cap\Lg)\oplus \bigoplus_{\lambda\in{\sigma_a}\setminus\RR}((E_{\lambda}+E_{\overline{\lambda}})\cap\Lg).
$$
Since there is no need to distinguish between generalized eigenspaces for real and non-real eigenvalues, we index them as $$
\Lg=\bigoplus_{i=1}^mE_i,
$$
where the indices $i=1,\cdots,k$ correspond to the expanding generalized eigenspaces, $i=k+1,\cdots,m-k$ correspond to the central generalized eigenspaces, and $i=m-k+1,\cdots,m$ correspond to the contracting generalized eigenspaces, such that the corresponding eigenvalues for each generalized eigenspaces are ordered: $$
|\lambda_1|\geq\cdots\geq|\lambda_k|>1=|\lambda_{k+1}|=\cdots=|\lambda_{m-k}|>|\lambda_{m-k+1}|\geq\cdots\geq|\lambda_m|.
$$
For each generalized eigenspace $E_i$, fix an orthonormal basis and impose the sup norm $\|\cdot\|_i$. These norms induce a metric on $\Lg$ via
$$d^\Lg(v,v'):=\max_{1\leq i\leq m}\|v_i-v_i'\|_i
$$
for vectors $v=\sum_{i=1}^mv_i$ and $v'=\sum_{i=1}^mv_i'$ (where $v_i,v_i'\in E_i$).

The above metric $d^{\mathfrak{g}}$ on $\Lg$ naturally defines a right-invariant metric $d^G$ on $G$, and also induces a metric $d^X$ on $X=G/\Gamma$. We write \begin{equation*}
    \begin{aligned}
        &B^{\Lg}(v,r):=\{v'\in \Lg:d^\Lg(v,v')<r\};\\
        &B^{G}(g,r):=\{g'\in G:d^G(g,g')<r\};\\
        &B^{X}(x,r):=\{x'\in X:d^X(x,x')<r\}
    \end{aligned}
\end{equation*}
for any $r>0$ and $v\in\Lg,g\in G,x\in X$.

Let $K\subseteq X$ be a compact $a$-invariant set, and take $\epsilon_0>0$ to be any number smaller than the injectivity radius of $K$. This means that for any $x\in K$, the map \begin{equation}\label{E:iso0}
B^G(1_G,\epsilon_0)\to B^X(x,\epsilon_0),\quad g\mapsto gx
\end{equation}
is an isometry. To compute $h_{top}(a|_K)$ we consider the $n$-Bowen balls in $X$ centered in $K$ with respect to the metric $d^X$: $$D^X_n(x,\epsilon):=\{y\in X:\forall 0\leq j\leq n-1,\;d^X(a^jy,a^jx)\leq \epsilon\},$$ and correspondingly, the $n$-Bowen balls in $G$ centered at $1_G$ with respect to the metric $d^G$:
$$D^G_n(1_G,\epsilon):=\{g\in G:\forall 0\leq j\leq n-1,\;d^G(a^jga^{-j},1_G)\leq \epsilon\}.$$
The first lemma says when restricted on $K$ one may replace the $n$-Bowen balls in $X$ by the image of the $n$-Bowen balls in $G$ under the map (\ref{E:iso0}).

\begin{lemma}
    For any sufficiently small $\epsilon=\epsilon(\epsilon_0,a)>0$, any $x\in K$ and any $n\geq 1$, the map \begin{equation}\label{E:iso1}
    D^G_n(1_G,\epsilon)\to D^X_n(x,\epsilon),\quad g\mapsto gx\end{equation} is an isometric surjection.
\end{lemma}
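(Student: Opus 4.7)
The plan is to realize $g \mapsto gx$ as a restriction of the injectivity-radius isometry \eqref{E:iso0} and to verify its three required properties (well-definedness, isometry, surjectivity) by controlling how conjugation by $a$ distorts $d^G$. Choose a constant $M_a \geq 1$ with the property that $d^G(a u a^{-1}, 1_G) \leq M_a \cdot d^G(u, 1_G)$ for every $u$ in a fixed neighborhood of $1_G$; such an $M_a$ exists because, under $\exp$, conjugation by $a$ corresponds to the linear map $\Ad(a)$, which has finite operator norm on each generalized eigenspace $E_i$. I will then take $\epsilon \leq \epsilon_0 / M_a$, small enough to validate this distortion estimate via the Baker--Campbell--Hausdorff formula; this makes $\epsilon$ depend only on $\epsilon_0$ and $a$, as required.

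For well-definedness and isometry, suppose $g \in D^G_n(1_G, \epsilon)$. Then for each $0 \leq j \leq n-1$, the element $a^j g a^{-j}$ lies in $B^G(1_G, \epsilon) \subseteq B^G(1_G, \epsilon_0)$, so applying \eqref{E:iso0} at the point $a^j x \in K$ yields
\[
d^X(a^j gx, a^j x) \;=\; d^X(a^j g a^{-j} \cdot a^j x, a^j x) \;=\; d^G(a^j g a^{-j}, 1_G).
\]
Taking the maximum over $j$ shows both that $gx \in D^X_n(x, \epsilon)$ and that the map preserves the Bowen metric. The corresponding identity $d^G(g_1, g_2) = d^X(g_1 x, g_2 x)$ for a pair of elements in $D^G_n(1_G,\epsilon)$ follows by the same argument together with right-invariance of $d^G$.

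Surjectivity is established by an induction that avoids losing uniformity in $n$. Given $y \in D^X_n(x, \epsilon)$, the case $j=0$ of \eqref{E:iso0} furnishes a unique $g \in B^G(1_G, \epsilon_0)$ with $gx=y$ and $d^G(g,1_G) \leq \epsilon$. I will show by induction on $0 \leq j \leq n-1$ that $a^j g a^{-j} \in B^G(1_G, \epsilon)$, which places $g$ in $D^G_n(1_G, \epsilon)$. The base case is immediate. For the inductive step, the hypothesis together with the distortion bound gives $a^{j+1} g a^{-(j+1)} \in B^G(1_G, M_a \epsilon) \subseteq B^G(1_G, \epsilon_0)$; meanwhile, $y \in D^X_n(x, \epsilon)$ together with \eqref{E:iso0} at $a^{j+1} x$ produces a small lift $h_{j+1} \in B^G(1_G, \epsilon_0)$ with $h_{j+1} \cdot a^{j+1} x = a^{j+1} y$ and $d^G(h_{j+1}, 1_G) \leq \epsilon$. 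Both candidates lie in the injectivity-radius neighborhood of $1_G$ and send $a^{j+1} x$ to $a^{j+1} y$, so the uniqueness clause in \eqref{E:iso0} forces $a^{j+1} g a^{-(j+1)} = h_{j+1}$, and hence $a^{j+1} g a^{-(j+1)} \in B^G(1_G, \epsilon)$.

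The main subtlety is precisely this uniformity in $n$: a naive simultaneous bound on all conjugates $a^j g a^{-j}$ would demand $\epsilon \leq \epsilon_0/M_a^{n-1}$ and collapse as $n \to \infty$. The induction avoids this by applying the distortion bound only once per step, after which the uniqueness of lifts inside $B^G(1_G,\epsilon_0)$, combined with the hypothesis $y \in D^X_n(x,\epsilon)$, promotes the looser bound $M_a \epsilon$ back to the tighter bound $\epsilon$ before the next step begins.
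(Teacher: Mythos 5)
Your proof is correct and follows essentially the same route as the paper's: the well-definedness/isometry part is obtained by restricting the injectivity-radius isometry \eqref{E:iso0}, and surjectivity is proved by the same induction, using a one-step distortion bound for conjugation by $a$ to place $a^{j+1}ga^{-(j+1)}$ inside $B^G(1_G,\epsilon_0)$ and then invoking uniqueness of lifts at $a^{j+1}x$ to upgrade the bound back to $\epsilon$. The only cosmetic difference is that you phrase the smallness condition on $\epsilon$ via an explicit Lipschitz constant $M_a$, where the paper simply uses continuity of $g\mapsto aga^{-1}$ to ensure $a\cdot D_1^G(1_G,\epsilon)\cdot a^{-1}\subseteq B^G(1_G,\epsilon_0)$.
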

\begin{proof}
    Since the conjugate map $g\mapsto aga^{-1}$ is continuous on $B^G(1_G,\epsilon_0)$, we may choose $\epsilon=\epsilon(\epsilon_0,a)<\epsilon_0$ small enough such that $a\cdot D_1^G(1_G,\epsilon)\cdot a^{-1}\subseteq B^G(1_G,\epsilon_0)$. Then it follows from $D^G_n(1_G,\epsilon)\subseteq B^G(1_G,\epsilon_0)$ that the map (\ref{E:iso1}) is an isometry. Moreover, we see that for any $g\in D^G_n(1_G,\epsilon)$ and any $0\leq j\leq n-1$, $$
    d^X(a^jgx,a^jx)=d^X(a^jga^{-j}\cdot a^jx,a^jx)\leq d^G(a^jga^{-j},1_G)\leq \epsilon,
    $$ which means that $gx\in D^X_n(x,\epsilon)$. It suffices to check that the map (\ref{E:iso1}) is surjective, namely, any $y\in D^X_n(x,\epsilon)$ has the form $gx$ for some $g\in D^G_n(1_G,\epsilon)$.

    In fact, since $y\in D^X_n(x,\epsilon)\subseteq B^X(x,\epsilon_0)=B^G(1_G,\epsilon_0).x$, one may write $y=gx$ for some $g\in B^G(1_G,\epsilon_0)$. We prove by induction on $n$ that $g\in D^G_n(1_G,\epsilon)$. First, when $n=1$, it follows from $gx\in D_1^X(x,\epsilon)$ and the map (\ref{E:iso0}) (applying to $x\in K$) that $g\in D^G_1(x,\epsilon)$. Suppose that $n\geq 2$ and $g\in D^G_{n-1}(1_G,\epsilon)$. In particular, it follows from $a^{n-2}ga^{-(n-2)}\in D^G_1(1_G,\epsilon)$ that $a^{n-1}ga^{-(n-1)}\in B^G(1_G,\epsilon_0)$. Since $$
    d^X((a^{n-1}ga^{-(n-1)})a^{n-1}x,a^{n-1}x)=d^X(a^{n-1}y,a^{n-1}x)\leq\epsilon,
    $$
    we see from the map (\ref{E:iso0}) (applying to $a^{n-1}x\in K$) that $a^{n-1}ga^{-(n-1)}\in D^G_1(1,\epsilon)$, and hence conclude that $g\in D^G_n(1_G,\epsilon)$. This completes the proof.
\end{proof}

We also consider the $n$-Bowen balls in $\Lg$ centered at $0_\Lg$ with respect to the metric $d^\Lg$:
$$D^\Lg_n(0_\Lg,\epsilon):=\{v\in \Lg:\forall 0\leq j\leq n-1,\;d^\Lg(\Ad(a)^jv,0_\Lg)\leq \epsilon\}.$$ The next lemma says one may furthermore replace the $n$-Bowen balls in $G$ centered at $1_G$ by the image of the $n$-Bowen balls in $\mathfrak{g}$ centered at $0_\Lg$ under the exponential map.

\begin{lemma}
    There exists a constant $C_0=C_0(G)>0$ such that for any sufficiently small $\epsilon>0$ and any $n\geq 1$, the maps
    \begin{equation*}
        \begin{aligned}
            &D_n^\Lg(0_\Lg,C_0^{-1}\epsilon)\to D_n^G(1_G,\epsilon),\quad v\mapsto \exp(v)\\
            &D_n^G(1_G,\epsilon)\to D_n^\Lg(0_\Lg,C_0^{-1}\epsilon),\quad g\mapsto \log(g)
        \end{aligned}
    \end{equation*}
    are well-defined smooth bi-Lipschitz diffeomorphisms onto an open subset.
\end{lemma}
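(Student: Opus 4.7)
The strategy is to leverage the standard fact that $\exp: \Lg \to G$ is a smooth bi-Lipschitz diffeomorphism near the origin, together with the equivariance
$$\exp \circ \Ad(a) = C_a \circ \exp, \qquad C_a(h) := aha^{-1},$$
which iterates to $\exp(\Ad(a)^j v) = a^j \exp(v) a^{-j}$. This equivariance translates the Bowen condition defining $D_n^\Lg$ into the Bowen condition defining $D_n^G$ iterate-by-iterate, with no cumulative distortion in $n$. Since $d\exp_{0_\Lg} = \id$ and $\exp$ is smooth, we may choose a constant $C_0 = C_0(G) > 0$ and an open neighborhood $U \subseteq \Lg$ of $0_\Lg$ which is mapped diffeomorphically by $\exp$ onto $V = \exp(U) \subseteq G$, and on which
$$C_0^{-1}\|v\|_\Lg \;\le\; d^G(\exp(v), 1_G) \;\le\; C_0 \|v\|_\Lg.$$

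First I would fix $\epsilon > 0$ small enough, depending only on $G$ (not on $n$), to guarantee that $B^\Lg(0_\Lg, C_0 \epsilon) \subseteq U$ and $B^G(1_G,\epsilon)\subseteq V$; this ensures the relevant Bowen balls lie inside the bi-Lipschitz domain. Then for $v \in D_n^\Lg(0_\Lg, C_0^{-1} \epsilon)$ and any $0 \le j \le n-1$, the Bowen constraint $\|\Ad(a)^j v\|_\Lg \le C_0^{-1} \epsilon$ places $\Ad(a)^j v$ inside $U$, and the upper Lipschitz bound together with the equivariance give
$$d^G(a^j \exp(v) a^{-j}, 1_G) \;=\; d^G(\exp(\Ad(a)^j v), 1_G) \;\le\; C_0 \cdot C_0^{-1} \epsilon \;=\; \epsilon,$$
so $\exp(v) \in D_n^G(1_G, \epsilon)$. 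A symmetric calculation shows that $g \mapsto \log(g)$ is well-defined and Lipschitz on the group-side Bowen ball, using the lower Lipschitz bound applied to $\log(a^j g a^{-j}) = \Ad(a)^j \log(g)$; here one may need to enlarge $C_0$ to reconcile the two directions of the bi-Lipschitz inequality. Smoothness of both maps is inherited from $\exp$ and $\log$, the bi-Lipschitz constants transfer directly from the ambient metrics on $\Lg$ and $G$ to the Bowen-ball subsets, and the image is open because $\exp$ is an open map on $U$.

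The main obstacle --- really the only point requiring care --- is ensuring the constants are uniform in $n$. This is precisely where the equivariance $\exp\circ\Ad(a)=C_a\circ\exp$ is essential: rather than conjugation by $a^j$ producing cumulative distortion as $j$ grows, each iterate of $\Ad(a)$ on $\Lg$ corresponds pointwise to an iterate of $C_a$ on $G$, so the Bowen chain is handled one step at a time by a single application of the local bi-Lipschitz bound. Once this observation is in hand, the remainder of the argument is routine bookkeeping about constants.
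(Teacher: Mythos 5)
Your proposal is correct and follows essentially the same route as the paper, whose entire proof is the assertion that the lemma ``easily follows from the fact that the exponential map is a local smooth bi-Lipschitz diffeomorphism around $0_\Lg$''; you supply the key detail, namely the equivariance $\exp(\Ad(a)^j v)=a^j\exp(v)a^{-j}$, which transfers the Bowen conditions one iterate at a time with no $n$-dependent distortion. The only step deserving slightly more care is the identity $\log(a^jga^{-j})=\Ad(a)^j\log(g)$ in the $G\to\Lg$ direction: since $\exp$ is only locally injective one should verify by a short induction on $j$ (shrinking $\epsilon$ so that $\Ad(a)$ maps the relevant ball into the injectivity domain, exactly as the paper does for the analogous surjectivity claim in the preceding lemma) that $\Ad(a)^j\log(g)$ stays in the domain where it must coincide with $\log(a^jga^{-j})$; your remark that $C_0$ may need to be enlarged in that direction is consistent with the two-sided containment the paper actually uses afterwards.
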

\begin{proof}
    This easily follows from the fact that the exponential map is a local smooth bi-Lipschitz diffeomorphism around $0_\Lg$.
\end{proof}

To summarize, any sufficiently small $n$-Bowen ball in $X$ centered in $K$ is ``roughly'' the image of a small $n$-Bowen ball in $\Lg$ with radius of the same order, i.e. there exists a constant $C_0=C_0(G)>0$ such that for any sufficiently small $\epsilon>0$, any $x\in K$ and $n\geq 1$, $$
\exp(D^\Lg_n(0_\Lg,C_0^{-1}\epsilon)).x\subseteq D^X_n(x,\epsilon)\subseteq\exp(D^\Lg_n(0_\Lg,C_0\epsilon)).x.
$$

Here a lemma is inserted to describe the actual shape of open balls $B^\Lg(v,r)$ and $n$-Bowen balls $D^\Lg_n(0_\Lg,\eps)$ with respect to the metric $d^\Lg$.

\begin{lemma}\label{L:parallel}
    With respect to the fixed basis and the metric $d^\Lg$ in $\Lg$, each open ball $B^\Lg(v,r)$ is an open cube centered at $v$, of side length $2r$, and with edges parallel to the basis vectors. Each $n$-Bowen ball $D^\Lg_n(0_{\Lg},\epsilon)$ is contained in a closed parallelepiped centered at $0_{\Lg}$ whose edges parallel to the basis vectors are all equal in length to \begin{itemize}
        \item $2C_1\epsilon\cdot(|\lambda_i|-\delta)^{-(n-1)}$ for $1\leq i\leq k$;
        \item $2C_1\epsilon$ for $k+1\leq i\leq m$,
    \end{itemize}
    where $C_1=C_1(\delta,a)>0$ is a constant given by any $0<\delta<|\lambda_k|-1$.
\end{lemma}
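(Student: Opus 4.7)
The plan splits the lemma into three nearly independent pieces. The claim about $B^\Lg(v,r)$ follows immediately from the construction of $d^\Lg$: writing $v - v' = \sum_i (v_i - v_i')$ with $v_i - v_i' \in E_i$, the inequality $d^\Lg(v,v') < r$ amounts to $\|v_i - v_i'\|_i < r$ for every $i$. Since each $\|\cdot\|_i$ is by definition the coordinate sup norm in the fixed orthonormal basis of $E_i$, concatenating these bases turns $d^\Lg$ into the coordinate sup norm on all of $\Lg$, so $B^\Lg(v,r)$ is exactly the open cube of side length $2r$ centred at $v$ with edges parallel to that concatenated basis.

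For the Bowen ball I would exploit the $\Ad(a)$-invariance of each generalised eigenspace $E_i$. Writing $v = \sum_i v_i$, one has $(\Ad(a)^j v)_i = \Ad(a)^j v_i \in E_i$, so the defining condition $d^\Lg(\Ad(a)^j v,0_\Lg) \le \epsilon$ for $0 \le j \le n-1$ decouples into the system
\[
\|\Ad(a)^j v_i\|_i \le \epsilon, \qquad 1 \le i \le m,\; 0 \le j \le n-1.
\]
Thus $D^\Lg_n(0_\Lg,\epsilon)$ sits inside a product of coordinate constraints along the basis, and the two ranges of $i$ can be handled separately. For $k+1 \le i \le m$ (central and contracting blocks) the single constraint $j=0$ already gives $\|v_i\|_i \le \epsilon$, so each coordinate of $v_i$ lies in $[-\epsilon,\epsilon]$, contributing an edge of length $2\epsilon$, which is at most $2C_1\epsilon$ once one insists $C_1 \ge 1$.

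For $1 \le i \le k$ the restriction $\Ad(a)|_{E_i}$ has spectral radius $|\lambda_i| > 1$, hence $(\Ad(a)|_{E_i})^{-1}$ has spectral radius $|\lambda_i|^{-1}$. A standard consequence of Gelfand's formula in finite dimensions produces, for any $0 < \delta < |\lambda_k|-1$, a uniform constant $C_1 = C_1(\delta,a) \ge 1$ with
\[
\bigl\|(\Ad(a)|_{E_i})^{-j} w\bigr\|_i \le C_1 (|\lambda_i|-\delta)^{-j} \|w\|_i \qquad \text{for all } j \ge 0,\; w \in E_i.
\]
Applying this with $j = n-1$ and $w = \Ad(a)^{n-1} v_i$, whose $\|\cdot\|_i$-norm is at most $\epsilon$ by the Bowen condition, yields $\|v_i\|_i \le C_1 \epsilon (|\lambda_i|-\delta)^{-(n-1)}$, so each coordinate of $v_i$ lies in an interval of length $2C_1\epsilon (|\lambda_i|-\delta)^{-(n-1)}$. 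Taking the product of these coordinate bounds across all $i$ realises $D^\Lg_n(0_\Lg,\epsilon)$ inside the claimed closed parallelepiped. The main technical point is the uniform operator bound for $(\Ad(a)|_{E_i})^{-j}$: on generalised eigenspaces the restriction may carry nontrivial Jordan structure and, when $\lambda_i \notin \RR$, a rotational part, so one cannot bound the operator simply by $|\lambda_i|^{-j}$. Absorbing these polynomial and rotational factors into $C_1$ is precisely why the exponent base in the conclusion must be $|\lambda_i|-\delta$ rather than $|\lambda_i|$.
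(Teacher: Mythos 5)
Your argument is correct and follows essentially the same route as the paper: the first claim is immediate from the definition of $d^{\Lg}$ as a sup norm over the concatenated bases of the $E_i$, and the Bowen-ball estimate comes from decoupling the conditions over the $\Ad(a)$-invariant blocks and invoking a Jordan-form/Gelfand bound $\|v_i\|_i\leq C_1(|\lambda_i|-\delta)^{-(n-1)}\|\Ad(a)^{n-1}v_i\|_i$, which is exactly the lower bound $C_1^{-1}(|\lambda_i|-\delta)^n\|v\|_i\leq\|\Ad(a)^n v\|_i$ used in the paper. Your handling of the blocks $k+1\leq i\leq m$ via the $j=0$ constraint alone is, if anything, slightly cleaner than the paper's uniform $\min_{0\le j\le n-1}(|\lambda_i|-\delta)^{-j}$ formulation.
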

\begin{proof}
    The first statement is clear from definition. For the second statement, we first observe from Jordan's canonical form that for any $\delta>0$ there exists a constant $C_1=C_1(\delta,a)>0$ such that $$
    C_1^{-1}\cdot (|\lambda_i|-\delta)^n\|v\|_i\leq \|\Ad(a)^nv\|_i\leq C_1\cdot (|\lambda_i|+\delta)^n\|v\|_i
    $$
    for any $i\geq 1,n\geq 0$ and any $v\in E_i$. Choose $0<\delta<|\lambda_k|-1$. It follows that \begin{equation*}
    \begin{aligned}
        D^\Lg_n(0_{\Lg},\eps)&=\left\{v=\sum_{i=1}^mv_i\in\Lg: \forall 1\leq i\leq m,\forall 0\leq j\leq n-1,\|\Ad^j(a)v_i\|_i\leq \epsilon\right\}\\
        &\subseteq\left\{v=\sum_{i=1}^mv_i\in\Lg: \forall 1\leq i\leq m,\|v_i\|_i\leq C_1\epsilon\cdot\min_{0\leq j\leq n-1}(|\lambda_i|-\delta)^{-j}\right\}\\
        &=\left\{v=\sum_{i=1}^mv_i\in\Lg: 
        \begin{matrix}
\forall 1\leq i\leq k,\|v_i\|_i\leq C_1\epsilon\cdot(|\lambda_i|-\delta)^{-(n-1)};\\
\forall k+1\leq i\leq m,\|v_i\|_i\leq C_1\epsilon
        \end{matrix}
  \right\}
    \end{aligned}
    \end{equation*}
    This is the desired parallelepiped.
\end{proof}

Now we introduce two notions concerning coverings of $K$, one of which using $n$-Bowen balls is related to the topological entropy of $a$ restricted to $K$, and the other using open balls is related to the Hausdorff dimension of $K$. We shall show that they are in close relation with each other. Write \begin{equation*}
    \begin{aligned}
        &M_K(\epsilon,n):=\text{minimal number of Bowen balls }D_n^X(x,\epsilon) \text{ centered in } K \text{ to cover }K;\\
        &N_K(r):= \text{minimal number of open balls }B^X(x,r) \text{ centered in } X \text{ to cover }K.
    \end{aligned}
\end{equation*}
It follows from definitions that \begin{equation*}
    \begin{aligned}
        &h_{top}(a|_K)\geq\lim_{\eps\to 0^+}\limsup_{n\to +\infty}\frac{\log(M_K(\epsilon,n))}{n};\\
        &\dim(K)\leq \liminf_{r\to 0^+}\frac{\log(N_K(r))}{\log(r^{-1})}.
    \end{aligned}
\end{equation*}

\begin{lemma}
    For any sufficiently small $\epsilon,\delta>0$ and any $n\geq 1$, we have $$C(\epsilon,n)\cdot M_K(\epsilon,n)\geq N_K(r(\epsilon,n)),$$ where 
    \begin{equation*}
        \begin{aligned}
&C(\epsilon,n)\leq 2^d(|\lambda_1|-\delta)^{d(n-1)}\cdot\prod_{i=1}^k\dfrac{1}{(|\lambda_i|-\delta)^{d_i(n-1)}},\\ &r(\epsilon,n)=C_1C_0^2\epsilon\cdot (|\lambda_1|-\delta)^{-(n-1)},\;d=\dim(\Lg),\;d_i=\dim(E_i).
    \end{aligned}
    \end{equation*}
\end{lemma}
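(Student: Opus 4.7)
The plan is to translate a minimal Bowen covering of $K$ into a covering by ordinary metric balls of radius $r(\epsilon,n)$, estimating the blow-up factor by working inside $\mathfrak{g}$. First I would fix a minimal Bowen covering $K\subseteq\bigcup_{j=1}^{M_K(\epsilon,n)} D_n^X(x_j,\epsilon)$. By the inclusion summarized just before the statement, each Bowen ball satisfies $D_n^X(x_j,\epsilon)\subseteq \exp\bigl(D_n^{\mathfrak{g}}(0_{\mathfrak{g}},C_0\epsilon)\bigr)\cdot x_j$, and by Lemma \ref{L:parallel} the Lie-algebra Bowen ball on the right is contained in a parallelepiped $P_j$ centered at $0_{\mathfrak{g}}$ whose edges, parallel to the fixed basis of $\mathfrak{g}$, have length $2C_1C_0\epsilon(|\lambda_i|-\delta)^{-(n-1)}$ in each expanding direction $i\le k$ and $2C_1C_0\epsilon$ in each central or contracting direction $k+1\le i\le m$.

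Next I would cover $P_j$ by cubes $B^{\mathfrak{g}}(v,r')$ with $r':=C_1C_0\epsilon(|\lambda_1|-\delta)^{-(n-1)}$; since $(|\lambda_1|-\delta)>1$, this is the smallest half-side appearing in $P_j$, so $P_j$ can indeed be tiled by such cubes. Along each expanding direction $i\le k$ the number of cubes needed is at most $\lceil(|\lambda_1|-\delta)^{n-1}/(|\lambda_i|-\delta)^{n-1}\rceil$, while along each central or contracting direction it is at most $\lceil(|\lambda_1|-\delta)^{n-1}\rceil$. Multiplying across all $d=\sum_{i=1}^m d_i$ coordinate directions and applying the bound $\lceil x\rceil\le 2x$ for $x\ge 1$ produces a total count bounded by exactly $C(\epsilon,n)$, matching the stated formula.

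Finally I would push the covering back to $X$: each cube $B^{\mathfrak{g}}(v,r')$ maps under $\exp$ into $B^G(\exp(v),C_0r')$ by the bi-Lipschitz property with constant $C_0$, so its translate by $x_j$ lies in the metric ball $B^X(\exp(v)\cdot x_j,C_0r')=B^X(\exp(v)\cdot x_j,r(\epsilon,n))$ in $X$. Assembling all cubes across all indices $j$ then yields a covering of $\bigcup_{j}D_n^X(x_j,\epsilon)\supseteq K$ by at most $C(\epsilon,n)\cdot M_K(\epsilon,n)$ open balls of radius $r(\epsilon,n)$, which gives the desired inequality $N_K(r(\epsilon,n))\le C(\epsilon,n)\cdot M_K(\epsilon,n)$.

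The only step requiring care is the counting in the second paragraph, where the estimate $\lceil x\rceil\le 2x$ demands $x\ge 1$; this is automatic provided $\delta$ is fixed strictly less than $|\lambda_k|-1$ and $n\ge 1$, which ensures $(|\lambda_1|-\delta)^{n-1}\ge(|\lambda_i|-\delta)^{n-1}\ge 1$ in every direction. The constants $C_0,C_1$ and all smallness hypotheses on $\epsilon$ are simply inherited from the preceding two lemmas, so no additional obstruction arises.
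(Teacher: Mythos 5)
Your proposal is correct and follows essentially the same route as the paper: both arguments transfer the Bowen ball $D_n^X(x,\epsilon)$ to the Lie-algebra Bowen ball $D_n^{\Lg}(0_\Lg,C_0\epsilon)$, enclose the latter in the parallelepiped of Lemma \ref{L:parallel}, count the covering cubes of half-side $C_1C_0\epsilon(|\lambda_1|-\delta)^{-(n-1)}$ direction by direction using $\lceil x\rceil\le 2x$, and convert back to balls of radius $r(\epsilon,n)$ in $X$ via the bi-Lipschitz constant $C_0$. The only cosmetic difference is that you push the cube covering forward to $X$ while the paper pulls the problem back to $\Lg$; the counting and the constants are identical.
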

\begin{proof}
Let $\epsilon_1>0$ be any fixed number such that
\begin{itemize}
    \item $\exp|_{B^\Lg(0_\Lg,\epsilon_1)}$ is a smooth bi-Lipschitz diffeomorphism onto its image.
    \item $\exp(B^\Lg(0_\Lg,\epsilon_1))$ is contained in the ball $B^G(1_G,\epsilon_0)$ given by (\ref{E:iso0}).
\end{itemize} Let $C_0=C_0(G)>0$ denote the bi-Lipschitz constant of the exponential map around $0_\Lg$, i.e. for any $v,v'\in B^\Lg(0_\Lg,\epsilon_1)$, $$
    C_0^{-1}\cdot d^G(\exp(v),\exp(v'))\leq d^\Lg(v,v')\leq C_0\cdot d^G(\exp(v),\exp(v')).
    $$
    It follows that for any $x\in K$, the composition of maps $$
    \phi_x: B^\Lg(0_\Lg,\epsilon_0)\to \exp(B^\Lg(0_\Lg,\epsilon_0)).x,\quad v\mapsto\exp(v)x
    $$
    is a smooth bi-Lipschitz diffeomorphism with constant $C_0$. Suppose that $\epsilon>0$ is small enough such that $C_0\cdot(\epsilon+r(\epsilon,n))<\epsilon_1$. To prove the lemma, it suffices to show that each Bowen ball $D^X_n(x,\epsilon)$ centered in $K$ can be covered by $\leq C(\epsilon,n)$ open balls $B^X(x',r(\epsilon,n))$ centered in $X$. Note that $D^X_n(x,\epsilon)\subseteq \phi_x(D^\Lg_n(0_\Lg,C_0\epsilon))$ and for $x'=\phi_x(v')\in \phi_x(B^\Lg(0_\Lg,C_0\epsilon))$, $$
    \phi_x(B^\Lg(v',C_0^{-1}\cdot r(\epsilon,n)))\subseteq B^X(x',r(\epsilon,n))\subseteq \phi_x(B^\Lg(v',C_0\cdot r(\epsilon,n))).
    $$
    So we only need to show that each Bowen ball $D^\Lg_n(0_\Lg,C_0\epsilon)$ can be covered by $\leq C(\epsilon,n)$ open balls $B^\Lg(v',C_0^{-1}\cdot r(\epsilon,n))$ centered in $B^\Lg(0_\Lg,C_0\epsilon)$.

    In fact, in view of Lemma \ref{L:parallel}, we may choose \begin{equation*}
        C(\epsilon,n)=\prod_{i=1}^k\left\lceil\dfrac{2C_1C_0\epsilon\cdot (|\lambda_i|-\delta)^{-(n-1)}}{2C_0^{-1}\cdot r(\epsilon,n)}\right\rceil^{d_i}\cdot \prod_{i=k+1}^m\left\lceil\dfrac{2C_1C_0\epsilon}{2C_0^{-1}\cdot r(\epsilon,n)}\right\rceil^{d_i}
    \end{equation*}
    and $r(\epsilon,n)=C_1C_0^2\epsilon\cdot (|\lambda_1|-\delta)^{-(n-1)}$, where the constant $C_1=C_1(\delta,a)$ is given by any $0<\delta<|\lambda_k|-1$. It follows that $$
    C(\epsilon,n)\leq 2^d(|\lambda_1|-\delta)^{d(n-1)}\cdot\prod_{i=1}^k\dfrac{1}{(|\lambda_i|-\delta)^{d_i(n-1)}}.
    $$
\end{proof}

Therefore, we conclude that \begin{equation*}
    \begin{aligned}
        h_{top}(a|_K)&\geq \lim_{\eps\to 0^+}\limsup_{n\to +\infty}\frac{\log(N_K(\epsilon,n))-\log(C(\epsilon,n))}{\log(r(\epsilon,n)^{-1})}\cdot \frac{\log(r(\epsilon,n)^{-1})}{n}\\
        &\geq (\dim(K)-d)\cdot\log(|\lambda_1|-\delta)+\sum_{i=1}^kd_i\log(|\lambda_i|-\delta).
    \end{aligned}
\end{equation*}
Letting $\delta\to 0^+$ gives $h_{top}(a|_K)\geq (\dim(K)-d)\cdot\log|\lambda_1|+h_{top}(a)$.


\end{document}